\newtheorem{theorem}{Theorem}[section]
\newtheorem*{theorem*}{Theorem}
\newtheorem{lemma}[theorem]{Lemma}
\newtheorem*{lemma*}{Lemma}
\newtheorem{corollary}[theorem]{Corollary}
\newtheorem*{corollary*}{Corollary}
\newtheorem{proposition}[theorem]{Proposition}
\newtheorem*{proposition*}{Proposition}
\newtheorem{remark}[theorem]{Remark}
\newtheorem{definition}[theorem]{Definition}
\newtheorem{example}[theorem]{Example}
\newcommand{\ncom}{\newcommand}
\ncom{\rar}{\rightarrow}
\ncom{\lrar}{\longrightarrow}
\ncom{\ov}{\overline}
\ncom{\m}{\mbox}
\ncom{\sta}{\stackrel}
\ncom{\comx}{{\mathbb C}}
\ncom{\Z}{{\mathbb Z}}\ncom{\Q}{{\mathbb Q}}\ncom{\R}{{\mathbb R}}\ncom{\G}{{\mathbb G}}\ncom{\al}{\alpha} \ncom{\p}{{\mathbb P}}\ncom{\E}{{\mathbb E}}\ncom{\N}{{\mathbb N}}\ncom{\K}{\mathbb K} \ncom{\Le}{{\mathbb L}}\ncom{\A}{{\mathbb A}}\ncom{\B}{{\mathbb B}}\ncom{\F}{{\mathbb F}}\ncom{\C}{{\mathbb C}}\ncom{\f}{\frac}\ncom{\cA}{{\mathcal A}}\ncom{\cX}{{\mathcal X}}\ncom{\cO}{{\mathcal O}}\ncom{\cW}{{\mathcal W}}\ncom{\cL}{{\mathcal L}}\ncom{\cP}{{\mathcal P}}\ncom{\cH}{{\mathcal H}}\ncom{\cS}{{\mathcal S}}\ncom{\cM}{{\mathcal M}}\ncom{\cC}{\mathcal C}\ncom{\cT}{{\mathcal T}}\ncom{\cF}{{\mathcal F}}\ncom{\cN}{{\mathcal N}}\ncom{\cJ}{{\mathcal J}}\ncom{\cV}{{\mathcal V}}\ncom{\cZ}{{\mathcal Z}}\ncom{\cU}{{\mathcal U}}\ncom\cSU{{\mathcal S \mathcal U}}\ncom{\cG}{{\mathcal G}}\ncom{\cQ}{{\mathcal Q}}\ncom{\cR}{{\mathcal R}}\ncom{\cE}{{\mathcal E}}\ncom{\cY}{{\mathcal Y}}\ncom{\cD}{{\mathcal D}}
\ncom\bs{\mathbf{s}} \ncom\bm{\mathbf{m}} \ncom\bt{\mathbf{t}} \ncom\bp{\mathbf{p}} \ncom\bq{\mathbf{q}}
\ncom\lie{\mathfrak} \ncom{\bc}{\mathbb C} \ncom\om{\omega}
\title[Minuscule Schubert varieties, Kogan faces and poset polytopes]{Minuscule Schubert varieties: Poset polytopes, PBW-degenerated Demazure modules, and Kogan faces}
\author{Rekha Biswal and Ghislain Fourier}
\address{\newline The Institute of Mathematical Sciences, CIT Campus, Taramani, Chennai, India} 
\email{rekha@imsc.res.in}
\address{\newline
Mathematisches Institut, Universit\"at Bonn, Germany\newline
School of Mathematics and Statistics, University of Glasgow, UK}
\email{ghislain.fourier@glasgow.ac.uk}
\begin{document}
\begin{abstract}
We study a family of posets and the associated chain and order polytopes. We identify the order polytope as a maximal Kogan face in a Gelfand-Tsetlin polytope of a multiple of a fundamental weight. We show that the character of such a Kogan face equals to the character of a Demazure module which occurs in the irreducible representation of $\mathfrak{sl}_{n+1}$ having highest weight multiple of fundamental weight and for any such Demazure module there exists a corresponding poset and associated maximal Kogan face.\\
We prove that the chain polytope parametrizes a monomial basis of the associated PBW-graded Demazure module and further, that the Demazure module is a favourable module, e.g. interesting geometric properties are governed by combinatorics of convex polytopes. Thus, we obtain for any minuscule Schubert variety a flat degeneration into a toric projective variety which is projectively normal and arithmetically Cohen-Macaulay.\\
We provide a necessary and sufficient condition on the Weyl group element such that the toric variety associated to the chain polytope and the toric variety associated to the order polytope are isomorphic.
\end{abstract}
\maketitle

\section*{Introduction}
Toric degenerations of Schubert varieties have been studied quite a lot in the last decades. The most famous might be the degeneration via Gelfand-Tsetlin polytopes, see for example \cite{GL96}. There are various other degenerations, for example for every reduced decomposition of the longest Weyl group element \cite{Lit98, AB04}. Here is our approach to degenerations of minuscule Schubert varieties, e.g. Schubert varieties of a fixed Grassmannian $\operatorname{Gr } (i,n+1)$.\\
Consider the lattice points in the rectangle of height $n-i$ and width $i-1$, where the left column is labeled by $1$, the right column by $i$, the bottom row by $n$ and the top row by $i$. We consider the partial order induced by the cover relation $(k,j) \geq (k+1,j), (k,j-1)$, so $(k,j)$ is bigger than any of its right and upper neighbor. For a fixed sequence of integers $\underline{\ell} = i-1 \leq \ell_1 \leq \ldots \leq \ell_i \leq n$ we consider the subposet $\mathcal{P}_{\underline{\ell}}$ of lattice points 
in the right upper corner i.e union of all $(k,j)$ with $i \leq j \leq \ell_k$. AS we will see, all these $\underline{\ell}$ parametrize naturally the Schubert varieties in $\operatorname{Gr } (i,n+1)$.\\
Stanley (\cite{Sta86, Lit98}) has associated to each finite poset two polytopes, the order polytope and the chain polytope. Let $N = |\mathcal{P}_{\underline{\ell}}| = \sum (\ell_k +1 - i)$, then these polytopes are in our case
\[
\mathcal{O}_{\underline{\ell}} :=  \{ (x) \in \mathbb{R}^{N} \mid x_a \leq x_b \text{ if } a < b \, , \, 0 \leq x_a \leq 1 \; \; \forall \, a,b \in \mathcal{P}_{\underline{\ell}} \} \\
\]
\[
\mathcal{C}_{\underline{\ell}} := \{ (x) \in \mathbb{R}^{N} \mid 0 \leq x_{a_1} + \ldots + x_{a_s} \leq 1 \; \; \forall \, \text{ chains } a_1 < \ldots < a_s \in \mathcal{P}_{\underline{\ell}} \}.
\]
The two polytopes are related as follows:
\begin{enumerate}
\item The Ehrhart polynomial is the same for both polytopes (\cite{Sta86}).
\item $\mathcal{C}_{\underline{\ell}}$ and $\mathcal{O}_{\underline{\ell}}$ are normal (Corollary~\ref{cor-nor}).
\item $\mathcal{C}_{\underline{\ell}}$ is Gorenstein $\Leftrightarrow \mathcal{O}_{\underline{\ell}}$ is Gorenstein $\Leftrightarrow \mathcal{P}_{\underline{\ell}}$ is a pure poset (Corollary~\ref{cor-gor} and \cite{Sta78}). 
\item $\mathcal{C}_{\underline{\ell}}$ is unimodular equivalent to $\mathcal{O}_{\underline{\ell}}$ if and only if  $\ell_{i-1} < i+2
$ or  $\ell_{i-2} < i+1$. (\cite{HL12} and Lemma~\ref{lem-compare}).
\end{enumerate}

Let $\lie{sl}_{n+1} = \lie n^- \oplus \lie h \oplus \lie n^+$ . For $i$-th fundamental weight $\omega_i$ of $\lie{sl}_{n+1}$, let $V(m \omega_i)$ be the simple module of $\lie{sl}_{n+1}$ with highest weight $m \omega_i$ and $v_m$ a highest weight vector. For any $w \in W=S_{n+1}$, the Weyl group, the extremal weight space in $V( m \omega_i)$ of weight $w(m \omega_i)$ is one-dimensional.\\ 
We denote the $\lie n^+ \oplus \lie h$-submodule generated by this extremal weight space as $V_w(m \omega_i)$, called the \textit{Demazure module}. Certainly $V_{w \tau}(m \omega_i) = V_w(m \omega_i)$ for any $\tau \in S_i \times S_{n+1-i}$ and hence we can always assume that $w$ is of minimal length among all representatives of its class in $W^i:= S_{n+1}/S_{i} \times S_{n+1-i}$.\\
Let $\lie n_w^- = w^{-1} (\lie n^+) w \cap \lie n^-$ and denote $-R^-_w$ the set of roots in $\lie n^-_w$. $R^-_w$ is equipped with a natural partial order, $\alpha \geq \beta$, if their difference is again a positive root and then $R^-_w$ is isomorphic to some $\mathcal{P}_{\underline{\ell}_w}$ with $\operatorname{length }(w) = N$ and vice versa, there exists such a $w$ for all sequences $\underline{\ell}$ (Proposition~\ref{prop-pos}). Via conjugation with $w^{-1}$ we can identify $V_w(m \omega_i)$ with $U(\lie n^-_w).v_m$. 

We recall the PBW filtration on the universal enveloping algebra and the induced filtration on  $M = w^{-1} V_w(m \omega_i) w$ (this can be found more general in \cite{FFoL13a}):
\[
U(\lie n^-_w)_s := \langle x_{i_1} \cdots x_{i_r} \, | \, r \leq s, x_{i_j} \in \lie n^-_w \rangle \; ; \; M_s := U(\lie n^-_w)_s.v_m.
\]
The associated graded module is denoted $M^a$ and $S(\lie n^-_w)$ acts cyclically on this module. This implies that there exists $I \subset S(\lie n^-_w)$ with $M^a \cong S(\lie n^-_w)/I$. For $\bs \in \mathbb{Z}_{\geq 0}^{N}$ we associate $f^{\bs} = \prod_{\alpha \in R^-_w} f_\alpha^{s_\alpha} \in S(\lie n^-_w)$. The first main theorem of this paper is:
\begin{theorem*} Let $w \in W^i$ and $m \geq 0$.
\begin{enumerate}
\item $\{f^\bs.v_m \, | \, \bs \in m\mathcal{C}_{\underline{\ell}_w} \}$ is a basis of $(U(\lie n^-_w).v_{m})^a$.
\item $I$ is generated by $\left(U(\lie n^+).\{ f_\alpha^{m+1} \, | \, \alpha \in R^-_w \} \right) \cap S(\lie n^-_w)$.
\item $\{w(f^\bs).v_{w(m \omega_i)} \, | \, \bs \in m\mathcal{C}_{\underline{\ell}_w} \}$ is a basis of $V^a_w(m \omega_i)$.
\item Suppose $w \leq \tau \in W^i$, then $m\mathcal{C}_{\underline{\ell}_w}$ is a face of $m\mathcal{C}_{\underline{\ell}_\tau}$.
\item Let $w_0$ be the minimal representative of the longest element in $W^i$, then $m\mathcal{C}_{\underline{\ell}_{w_0}} = P(m \omega_i)$, the polytope defined in \cite{BD14}. 
\end{enumerate}
\end{theorem*}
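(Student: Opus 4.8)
The plan is to prove (1) first; statement (3) is then just (1) transported back along the identification from the excerpt (conjugation by $w^{-1}$ turns $V_w(m\omega_i)$ into $M=U(\lie n^-_w).v_m$, carries the PBW filtration to the PBW filtration, and sends $w(f^\bs).v_{w(m\omega_i)}$ to $f^\bs.v_m$), so no extra work is needed for (3). For (1) I would run the usual two-step scheme for PBW bases: show that the monomials $\{f^\bs.v_m\mid \bs\in m\mathcal{C}_{\underline{\ell}_w}\cap\Z^N\}$ span $M^a$, then compare cardinalities. The cardinality input is supplied by the earlier part of the paper: the order polytope $m\mathcal{O}_{\underline{\ell}_w}$ is realised there as a Kogan face whose character equals $\operatorname{char}V_w(m\omega_i)$, so $|m\mathcal{O}_{\underline{\ell}_w}\cap\Z^N|=\dim V_w(m\omega_i)$; by Stanley the chain and order polytopes have the same Ehrhart polynomial (\cite{Sta86}), hence $|m\mathcal{C}_{\underline{\ell}_w}\cap\Z^N|=\dim V_w(m\omega_i)=\dim M=\dim M^a$. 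Thus once spanning is established, the spanning set has exactly the right size and is automatically a basis.

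The real content is the spanning step, and I would organise it by first treating the maximal element $w_0$ and then descending. For $w_0$ one has $\underline{\ell}_{w_0}=(n,\dots,n)$, $\mathcal{P}_{\underline{\ell}_{w_0}}$ is the full $i\times(n-i+1)$ rectangle, and $R^-_{w_0}=\{\alpha_{p,q}\mid p\le i\le q\}$ with its root order; the chain inequalities of this rectangle are precisely the Dyck-path inequalities defining the polytope $P(m\omega_i)$ of \cite{BD14}, which simultaneously proves (5). For the minuscule weight $m\omega_i$ the PBW-basis theorem — spanning via the relations $f_\alpha^{m+1}.v_m=0$ together with the Dyck straightening relations — is already available (\cite{FFoL13a, BD14}), giving spanning for $w_0$. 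For a general $w\in W^i$ the key structural point is that $R^-_w$ is an order ideal of $R^-_{w_0}$ in the root poset, and that the chains of $\mathcal{P}_{\underline{\ell}_w}$ are exactly the chains of $\mathcal{P}_{\underline{\ell}_{w_0}}$ supported on $R^-_w$; therefore the straightening relations needed inside $S(\lie n^-_w)$ — the $U(\lie n^+)$-translates of $\{f_\alpha^{m+1}\mid\alpha\in R^-_w\}$, intersected with $S(\lie n^-_w)$ — never produce a factor $f_\beta$ with $\beta\notin R^-_w$, and the same rewriting algorithm reduces an arbitrary $f^\bt.v_m$ to a combination of $f^\bs.v_m$ with $\bs\in m\mathcal{C}_{\underline{\ell}_w}$. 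I expect this step — in particular verifying that the relations one actually uses stay within $S(\lie n^-_w)$ and that nothing outside $m\mathcal{C}_{\underline{\ell}_w}$ survives — to be the principal obstacle.

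Statement (2) then follows by a dimension sandwich. Put $I'$ for the ideal of $S(\lie n^-_w)$ generated by $(U(\lie n^+).\{f_\alpha^{m+1}\mid\alpha\in R^-_w\})\cap S(\lie n^-_w)$. The inclusion $I'\subseteq I$ is immediate, since these generators annihilate $v_m$ in $M^a$ and $I$ is stable under the derivation action of $\lie n^+$. Conversely, every relation invoked in the spanning step of (1) already lies in $I'$, so $S(\lie n^-_w)/I'$ is spanned by $\{f^\bs\mid\bs\in m\mathcal{C}_{\underline{\ell}_w}\}$; hence $\dim S(\lie n^-_w)/I'\le|m\mathcal{C}_{\underline{\ell}_w}\cap\Z^N|=\dim M^a=\dim S(\lie n^-_w)/I$, which together with $I'\subseteq I$ forces $I'=I$.

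Finally (4) (statement (5) having been handled above) is purely combinatorial. Using Proposition~\ref{prop-pos} I would match Bruhat order on $W^i$ with componentwise order on the sequences $\underline{\ell}$, so that $w\le\tau$ yields $\underline{\ell}_w\le\underline{\ell}_\tau$ and hence an embedding of $\mathcal{P}_{\underline{\ell}_w}$ into $\mathcal{P}_{\underline{\ell}_\tau}$ as an induced subposet (indeed as an order ideal, from the explicit staircase description). I would then use the elementary fact that for an induced subposet $\mathcal{Q}$ of a finite poset $\mathcal{P}$ the set $\{x\in\mathcal{C}_{\mathcal{P}}\mid x_a=0\text{ for all }a\in\mathcal{P}\setminus\mathcal{Q}\}$ is a face of $\mathcal{C}_{\mathcal{P}}$ — it is cut out by the valid inequalities $x_a\ge0$ coming from the singleton chains — and that, read in the coordinates indexed by $\mathcal{Q}$, this face is exactly $\mathcal{C}_{\mathcal{Q}}$, because the chain inequalities of $\mathcal{P}$ restrict to the chain inequalities of $\mathcal{Q}$. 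Rescaling by $m$ preserves faces, which gives (4).
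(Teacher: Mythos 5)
Your plan matches the paper's route on most points --- spanning via straightening relations kept inside $S(\lie n^-_w)$ by Corollary~\ref{coro-ell}, the $w_0$ specialisation giving~(5), conjugation by $w^{-1}$ for~(3), the dimension sandwich for~(2), and the induced-subposet face argument for~(4). The gap is in the linear-independence (counting) step: you obtain $|m\mathcal{C}_{\underline{\ell}_w}\cap\Z^N|=\dim V_w(m\omega_i)$ by invoking that $m\mathcal{O}_{\underline{\ell}_w}$ is a Kogan face of $GT(m\omega_{n+1-i})$ whose character equals $\operatorname{char}V_w(m\omega_i)$, together with Stanley's equality of Ehrhart polynomials. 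But that character identity is the paper's \emph{second} main theorem, proved in Section~\ref{Section4}, and its proof there explicitly uses Theorem~\ref{main-thm} --- the very theorem you are trying to establish --- to deduce $|m\mathcal{O}_{\underline{\ell}_w}\cap\Z^N| = \dim V_w(m\omega_i) = |T_w(m\omega_i)|$ and upgrade the inclusion $\{A_{\bs}.b_{m\omega_i}\}\subseteq T_w(m\omega_i)$ to an equality. Nor can you outsource the count to \cite{KST12}: the paper notes that result is only available for regular $\lambda$, whereas $m\omega_{n+1-i}$ is not regular. So, as written, the argument is circular.

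The paper avoids the loop by proving linear independence (Proposition~\ref{prop-lin}) directly, in two ways: (i) use the normality/Minkowski-sum property of $\mathcal{C}_{\underline{\ell}_w}$ to reduce to $m=1$ and then check that distinct antichains in $R^-_w$ act on $v_{\omega_i}\in\bigwedge^i\bc^{n+1}$ with pairwise distinct nonzero weight vectors; or (ii) observe that $m\mathcal{C}_{\underline{\ell}_w}$ is the face of $m\mathcal{C}_{\underline{\ell}_{w_0}}=P(m\omega_i)$ cut out by $s_\alpha=0$ for $\alpha\notin R^-_w$, and cite the linear independence of the monomials indexed by $P(m\omega_i)$ from \cite{BD14}. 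You in fact already state the key observation for (ii) --- that $\mathcal{P}_{\underline{\ell}_w}$ is an order ideal of $\mathcal{P}_{\underline{\ell}_{w_0}}$ and the chains restrict correctly --- but only deploy it for spanning. Using it for linear independence (a subset of a basis is linearly independent) closes the gap and reproduces the paper's second proof; alternatively, you could keep the Kogan-face route but weaken the invocation to the genuinely non-circular inclusion $\{A_{\bs}.b_{m\omega_i} : \bs\in m\mathcal{O}_{\underline{\ell}_w}\cap\Z^N\}\subseteq T_w(m\omega_i)$, which yields the needed upper bound $|m\mathcal{C}_{\underline{\ell}_w}\cap\Z^N|\le\dim V_w(m\omega_i)$ without presupposing the character identity, at the cost of importing the entire Kogan-face machinery of Section~\ref{Section4}.
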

\begin{example}
Let us consider here $\operatorname{Gr}(2,4)$, then we have six Weyl group elements. We focus here on $w = s_1s_3s_2$, then $\underline{\ell} = (2 \leq 3)$, the chain polytope is described as $0 \leq (2,2) \leq (1,2) \leq 1$ and $0 \leq (2,2) \leq (2,3) \leq 1$.  Then the basis of $V^a_w( \omega_2)$ is given by 
\[
\{ v_2\wedge v_4 \, , \, e_{\alpha_3}.v_2\wedge v_4 \, , \, e_{\alpha_1}.v_2\wedge v_4 \, , \,  e_{\alpha_3}e_{\alpha_1}.v_2\wedge v_4 \, , \, e_{\alpha_1 + \alpha_2 + \alpha_3}.v_2\wedge v_4\}.
\]
In this case, the polytope is unimodular equivalent to the face of the Gelfand-Tsetlin polytope $GT(\omega_2)$ defined by $x_{1,1} = x_{2,1}$ (for notations see Section~\ref{Section4}.
\end{example}
This theorem is analogous to the previous results for simple modules ($w$ being the longest Weyl group element) in the $A_n$-type case (\cite{FFoL11a}), 
$C_n$-type case (\cite{FFoL11b}), in both case over the integers (\cite{FFoL13}), for cominuscule weights  (\cite{BD14}), for Demazure modules for triangular Weyl group elements 
(including Kempf elements) in the $A_n$-type case (\cite{Fou14b}), partial results are also known for fusion products of simple modules (\cite{Fou14}). There is a particular interest in the graded character of these modules due to the conjectured connection to 
Macdonald polynomials (\cite{CO13, CF13, FM14, BBDF14}).\\

The famous Gelfand-Tsetlin polytope $GT(\lambda)$ parametrizes a monomial basis of $V(\lambda)$ \cite{GC50}. In \cite{Kog00} certain faces of this polytope were studied and the Weyl group type of such a face was introduced. Then \cite{KST12} showed that the Demazure character of $V_w(\lambda^*)$ for a regular $\lambda$ is equal to the character of the union of all faces of type $ww_0$ (here $w_0$ denotes the longest element in $W$).\\
We adapt the notion of Kogan faces to the special case $\lambda = m \omega_{n+1-i}$ and $w_0$ the minimal representative of the longest element in $W^i$, then
\begin{theorem*} Let $w \in W^i$ and $\tau = ww_0^{-1}$  and $m \geq 0$.
\begin{enumerate}
\item There is a unique maximal Kogan face $F_{\tau}(m\omega_{n+1-i})$ of type $\tau$  in $GT(m \omega_{n+1-i})$.
\item The character of $F_{\tau}(m\omega_{n+1-i})$ is equal to the character of $V_w(m\omega_i)$.
\item The defining poset structure on $F_{\tau}(\omega_{n+1-i})$ is isomorphic to $\mathcal{P}_{\underline{\ell}_w}$ and so \\$F_{\tau}(m\omega_{n+1-i}) = m\mathcal{O}_{\underline{\ell}_w}$.
\end{enumerate}
\end{theorem*}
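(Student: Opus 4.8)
The plan is to establish the three assertions essentially in the order (1), (3), (2), since the explicit poset identification in (3) is what makes the character computation in (2) transparent. First I would unwind the definition of a Kogan face of type $\tau$ for the specific weight $\lambda = m\omega_{n+1-i}$. Because $\lambda$ is a multiple of a single fundamental weight, the Gelfand--Tsetlin polytope $GT(m\omega_{n+1-i})$ is already highly degenerate: many of the GT-coordinates $x_{k,j}$ are forced to be constant (equal to $0$ or to $m$) by the boundary row, and only those lying in a rectangular window can move. The combinatorial data of a reduced word for $\tau = ww_0^{-1}$ picks out, via the standard Kogan recipe (setting $x_{k,j} = x_{k+1,j}$ along the chosen mirror diagonals), a distinguished face; I would show that among all Kogan faces carrying the label $\tau$ there is a unique maximal one by checking that the set of equalities imposed is independent of the choice of reduced word for $\tau$ in this minuscule situation — concretely, the relevant subword combinatorics collapses because every simple reflection in a reduced word for an element of $W^i$ appears in a controlled pattern. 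This gives (1), and simultaneously produces an explicit system of equalities cutting out $F_\tau$.

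Next, for (3), I would read off the face $F_\tau(\omega_{n+1-i})$ at level $m=1$ as an order polytope. The surviving (non-frozen) GT-coordinates, modulo the imposed equalities $x_{k,j}=x_{k+1,j}$, are in bijection with a subposet of the ambient rectangle, and the GT-inequalities $x_{k,j}\le x_{k-1,j}\le x_{k,j-1}$ restrict exactly to the cover relations $(k,j)\ge(k+1,j),(k,j-1)$ defining $\mathcal{P}_{\underline{\ell}}$. The content here is to match the combinatorics of the reduced word for $ww_0^{-1}$ against the sequence $\underline{\ell}_w$ attached to $w$ in Proposition~\ref{prop-pos}; I expect this to reduce to a bookkeeping identity relating the inversion set $R^-_w$ to the complementary set of boxes of the Grassmannian rectangle, i.e.\ the standard correspondence between $W^i$, partitions inside an $i\times(n+1-i)$ box, and subposets $\mathcal{P}_{\underline{\ell}}$. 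Once the poset is identified on the nose, $F_\tau(m\omega_{n+1-i}) = m F_\tau(\omega_{n+1-i}) = m\mathcal{O}_{\underline{\ell}_w}$ follows since $GT$-polytopes of multiples of a weight scale linearly and the face survives scaling.

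For (2), I would compute the character of $F_\tau(m\omega_{n+1-i})$ as the sum of the GT-monomials over lattice points of the face, using that each lattice point of $GT(m\omega_{n+1-i})$ carries its usual weight. By (3) this is a sum over $m\mathcal{O}_{\underline{\ell}_w}$, and using the transfer map / Ehrhart equivalence between the order and chain polytopes (item (2) of the introduction, from \cite{Sta86}) together with the fact that the transfer map is weight-preserving in the relevant grading, I would rewrite this as a sum over $m\mathcal{C}_{\underline{\ell}_w}$. The first main theorem (part (3)) identifies exactly this lattice-point sum with the character of the PBW-graded Demazure module $V^a_w(m\omega_i)$, and since passing to the PBW-graded module does not change the $\lie h$-character, this equals $\operatorname{char} V_w(m\omega_i)$. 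Alternatively, and perhaps more cleanly, I would invoke \cite{KST12}: the character of the union of Kogan faces of type $ww_0$ is the Demazure character, and then argue that for $\lambda$ a multiple of a fundamental weight this union degenerates to the single maximal face $F_\tau$ — the non-maximal faces of the same type contribute lattice points already lying in $F_\tau$.

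The main obstacle I anticipate is (1) together with the word-combinatorics feeding into (3): proving \emph{uniqueness} of the maximal Kogan face of a given type requires showing that the defining equalities do not depend on the chosen reduced word, and while this is false for general $\lambda$, it should hold here precisely because $m\omega_{n+1-i}$ is minuscule-type data so that the RSK-type tangle of subwords trivializes. Making that degeneration rigorous — rather than hand-waving "many coordinates are frozen" — is where the real work lies; everything downstream (the order-polytope identification and the character comparison) is then a matter of matching two explicit combinatorial models, one from the GT side and one from the $R^-_w$ side.
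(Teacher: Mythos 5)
Your framing of part~(1) is not quite right: a Kogan face is not constructed from a reduced word, and the type $w(F)$ is an intrinsic attribute of a face (read off its explicit equalities row by row), with many distinct faces sharing the same type. Showing that the equalities are independent of the chosen reduced word for $\tau$ would therefore not address uniqueness. What the paper actually does is invoke Kogan's ladder-move result --- any two faces of the same type are connected by a sequence of ladder moves --- and then shows (Proposition~\ref{move-gt}, Corollary~\ref{kog-ord}) that for the degenerate highest weight $m\omega_{n+1-i}$ every ladder move is either trivial or strictly shrinks the face, because the frozen boundary row generates a staircase of \emph{implicit} equalities that render the moved equality redundant. This exhibits the Gelfand--Tsetlin face $F_\tau$ as the unique maximal element. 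It is this nesting of same-type faces under inclusion, not reduced-word independence, that you need; without the ladder-move compatibility argument you have no uniqueness.

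Your first route for part~(2) asserts that Stanley's transfer map between the order and chain polytopes is weight-preserving; this is unjustified and, in the relevant sense, false. The transfer map is a piecewise-linear bijection that matches lattice-point counts (hence Ehrhart polynomials), but it does not intertwine the GT-weight of a lattice point of $m\mathcal{O}_{\underline{\ell}_w}$ (a row-difference sum) with the root-sum weight $\sum s_\alpha\alpha$ of a lattice point of $m\mathcal{C}_{\underline{\ell}_w}$. The paper's actual argument is crystal-theoretic: it identifies GT lattice points with semi-standard Young tableaux via Kashiwara operator strings $A_\bs$, uses the implicit equalities coming from the degenerate weight to truncate those strings so that $A_\bs.b_{m\omega_i}\in T_w(m\omega_i)$ for every $\bs\in F_\tau(m\omega_{n+1-i})$, and then closes by comparing cardinalities using part~(3), Stanley's Ehrhart equivalence, and Theorem~\ref{main-thm}: $|F_\tau\cap\mathbb{Z}^N|=|m\mathcal{O}_{\underline{\ell}_w}\cap\mathbb{Z}^N|=|m\mathcal{C}_{\underline{\ell}_w}\cap\mathbb{Z}^N|=\dim V_w(m\omega_i)=|T_w(m\omega_i)|$. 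Your alternative of invoking~\cite{KST12} directly would require extending their theorem from regular $\lambda$ to rectangular weights, which the paper only mentions as a remark and deliberately avoids. Part~(3) as you sketch it is sound and agrees with Corollary~\ref{pol-equal}, but it relies on having the explicit equality pattern from~(1) already in hand.
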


Let $\mathbb{U}$ be the group of invertible strictly upper triangular matrices (with Lie algebra $\lie n^+$) and $\mathbb{U}^a$ be the algebraic group of $\dim \mathbb{U}$-copies of the additive group. \\
Let $\prec$ be a total order on the set of positive roots, which extends to a homogeneous lexicographic order on $U(\lie n^+)$. 
The associated graded space $V_w(m \omega_i)^t$ of the induced filtration on $V_w(m \omega_i)$ is then a $\mathbb{U}^a$-module such that all graded components are at 
most one-dimensional and there is a unique monomial basis of this space. We call the monomials of this basis essential. $V_w(m \omega_i)$ is 
called \textit{favourable} if there exists a normal polytope whose lattice points are exactly the essential monomials and the lattice points in the $n$-th 
dilation parametrize a basis of the Cartan component of the $n$-times tensor product of $V_w(m \omega_i)$ (\cite{FFoL13a}). We can show:
\begin{proposition*}
$V_w(m \omega_i)$ is a favourable module. 
\end{proposition*}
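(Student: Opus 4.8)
The plan is to check the defining properties of a favourable module from \cite{FFoL13a}, with the normal polytope taken to be $m\mathcal{C}_{\underline{\ell}_w}$; its normality is already provided by Corollary~\ref{cor-nor}, and since $m\mathcal{C}_{\underline{\ell}_w}=m\cdot\mathcal{C}_{\underline{\ell}_w}$ its $n$-th dilate is $(nm)\mathcal{C}_{\underline{\ell}_w}$. Concretely there are two points to establish: (a) the set of essential monomials of $V_w(m\omega_i)$ equals $m\mathcal{C}_{\underline{\ell}_w}\cap\Z^N$; and (b) for every $n\ge 1$ the lattice points of $(nm)\mathcal{C}_{\underline{\ell}_w}$ index a basis of the Cartan component of $V_w(m\omega_i)^{\otimes n}$.

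For (a) the key observation is that the monomial filtration attached to $\prec$ refines the PBW filtration, so that $V_w(m\omega_i)^t=\mathrm{gr}_\prec\!\big(V_w(m\omega_i)^a\big)$ and in particular $\dim V_w(m\omega_i)^t=\dim V_w(m\omega_i)^a=|m\mathcal{C}_{\underline{\ell}_w}\cap\Z^N|$ by part (1) of the first main theorem; hence there are exactly $|m\mathcal{C}_{\underline{\ell}_w}\cap\Z^N|$ essential monomials. The content is then to show $\mathrm{in}_\prec(I)\supseteq(f^\bs\mid \bs\notin m\mathcal{C}_{\underline{\ell}_w})$, for $\prec$ chosen to refine the poset order on $R^-_w$; once this is known, comparing dimensions forces $\mathrm{in}_\prec(I)=(f^\bs\mid \bs\notin m\mathcal{C}_{\underline{\ell}_w})$, so that the standard monomials, i.e. the essential monomials, are exactly $m\mathcal{C}_{\underline{\ell}_w}\cap\Z^N$. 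To get this inclusion I would start from the generators of $I$ in part (2) of the first main theorem: from $f_\alpha^{m+1}\in I$ and repeated application of root vectors $e_\beta$ with $[e_\beta,f_\alpha]$ proportional to the root vector of a smaller element of a chain, one produces for every chain $\alpha_1<\cdots<\alpha_r$ in $R^-_w$ and every $(m_1,\dots,m_r)$ with $\sum_j m_j\ge m+1$ an element of $U(\lie n^+).\{f_\gamma^{m+1}\mid\gamma\in R^-_w\}\cap S(\lie n^-_w)$ whose $\prec$-leading term divides $f_{\alpha_1}^{m_1}\cdots f_{\alpha_r}^{m_r}$, which is precisely what is needed since $\bs\notin m\mathcal{C}_{\underline{\ell}_w}$ means exactly that some chain-sum of $\bs$ exceeds $m$.

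For (b) I would first identify the Cartan component of $V_w(m\omega_i)^{\otimes n}$, that is, the cyclic $\lie n^+\oplus\lie h$-submodule generated by $v_{w(m\omega_i)}^{\otimes n}$, with the Demazure module $V_w(nm\omega_i)$: this is the standard multiplicativity of Demazure modules, the surjection $V_w(\lambda)\otimes V_w(\mu)\twoheadrightarrow V_w(\lambda+\mu)$ onto the submodule generated by the product of extremal weight vectors, which for weights in $\Z_{\ge 0}\,\omega_i$ is an isomorphism onto that submodule. Part (1) of the first main theorem, applied with $nm$ in place of $m$, gives $\dim V_w(nm\omega_i)=|(nm)\mathcal{C}_{\underline{\ell}_w}\cap\Z^N|$. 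Since $\mathcal{C}_{\underline{\ell}_w}$ is normal, every lattice point of $(nm)\mathcal{C}_{\underline{\ell}_w}$ is a sum of $n$ lattice points of $m\mathcal{C}_{\underline{\ell}_w}$, so (by the general mechanism of \cite{FFoL13a}) the products of the corresponding essential monomials span the Cartan component; as there are at most $|(nm)\mathcal{C}_{\underline{\ell}_w}\cap\Z^N|$ of them and this dimension is attained, they form a basis indexed by $(nm)\mathcal{C}_{\underline{\ell}_w}\cap\Z^N$ — which is (b), and simultaneously shows the surjection above is an isomorphism.

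The step I expect to be the main obstacle is the inclusion $\mathrm{in}_\prec(I)\supseteq(f^\bs\mid \bs\notin m\mathcal{C}_{\underline{\ell}_w})$ in (a): even though the generators of $I$ are known explicitly from the first main theorem, one must control the $\prec$-leading terms of their $U(\lie n^+)$-orbits carefully enough to see all the chain relations cutting out the chain polytope, which is the Demazure analogue of the Dyck-path/leading-term analysis of \cite{FFoL11a, FFoL13a, Fou14b}. Everything else — the dimension counts, the normality input from Corollary~\ref{cor-nor}, and the multiplicativity of Demazure modules — is then routine.
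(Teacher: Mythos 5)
Your proof follows essentially the same route as the paper: use $m\mathcal{C}_{\underline{\ell}_w}$ as the polytope, note that the homogeneous lex filtration refines the PBW one so the dimension of $V_w(m\omega_i)^t$ is known from the first main theorem, argue that the lattice points of the chain polytope are precisely the essential monomials via a leading-term/straightening analysis, and then combine normality of the polytope with multiplicativity of Demazure modules for the second condition. The one difference is that you flag the inclusion $\mathrm{in}_\prec(I)\supseteq(f^\bs\mid\bs\notin m\mathcal{C}_{\underline{\ell}_w})$ as the main obstacle still to be proved, whereas the paper simply cites its Lemma~\ref{lem-straight}, which already establishes exactly this straightening law and is used unchanged in the paper's (very short) proof.
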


$V_w(m \omega_i)$ is a $\mathbb{U}$-module, while $V_w(m \omega_i)^a$ and $V_w(m \omega_i)^t$ are $\mathbb{U}^a$-modules. We set
\[
X_w^a := \overline{\mathbb{U}^a.[m]} \subset \mathbb{P}(V_w(m\omega_i)^a)  \; ; \; X_w^t := \overline{\mathbb{U}^a.[m]} \subset \mathbb{P}(V_w(m\omega_i)^t).
\]

It has been shown in \cite{FFoL13a} that the property \textit{favourable} imply certain interesting geometric properties of the corresponding varieties, so the proposition implies 
\begin{corollary*}
$X_w^t$ is a flat toric degeneration of $X_w^a$ and both are flat degenerations of $X_w$ into projectively normal and arithmetically Cohen-Macaulay varieties. 
\end{corollary*}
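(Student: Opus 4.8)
The Proposition reduces the Corollary to an application of the general machinery of \cite{FFoL13a}, so the plan is to check that its hypotheses hold on the nose and then quote the consequences. First I would record what \emph{favourable} gives us: by the Proposition there is a normal polytope $P$ whose lattice points are exactly the essential monomials of $V_w(m\omega_i)^t$ and whose $r$-th dilate $rP$ indexes a basis of the Cartan component of $V_w(m\omega_i)^{\otimes r}$; by part (1) of the first main theorem together with the chosen homogeneous order $\prec$ we may take $P$ to be a unimodular copy of the chain polytope $\mathcal{C}_{\underline{\ell}_w}$, which is normal by Corollary~\ref{cor-nor}. I would also identify $X_w = \overline{\mathbb{U}.[m]} \subset \p(V_w(m\omega_i))$ with the minuscule Schubert variety in the embedding given by $\cL^{\otimes m}$, using that $\mathbb{U}.[m]$ is the open Schubert cell (classical, going back to Demazure).

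Next I would invoke the degeneration constructions of \cite{FFoL13a}. The PBW filtration on $U(\lie n^-_w)$ induces, via its Rees algebra over $\A^1$, a flat family with generic fibre $X_w$ and special fibre $X_w^a = \overline{\mathbb{U}^a.[m]} \subset \p(V_w(m\omega_i)^a)$. Refining by the homogeneous monomial order $\prec$ and again passing to the associated Rees algebra produces a second flat family over $\A^1$ with generic fibre $X_w^a$ and special fibre $X_w^t = \overline{\mathbb{U}^a.[m]} \subset \p(V_w(m\omega_i)^t)$. Since every graded component of $V_w(m\omega_i)^t$ is at most one-dimensional and the essential monomials are precisely the lattice points of $P$, the homogeneous coordinate ring of $X_w^t$ is the semigroup algebra of the cone over $P$, so $X_w^t$ is the projective toric variety attached to $P$. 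This gives the two flat degenerations asserted in the statement, and the first of them is toric.

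It then remains to transport the good properties along these families. Normality of $P$ implies the cone over $P$ is generated in degree one and integrally closed, so the coordinate ring of $X_w^t$ is generated in degree one and normal; hence $X_w^t$ is projectively normal, and being a normal affine semigroup ring it is Cohen-Macaulay by Hochster's theorem, i.e. arithmetically Cohen-Macaulay. Both families are flat with constant Hilbert polynomial: this is exactly what parts (1) and (3) of the first main theorem give, since $V_w(m\omega_i)$, $V_w(m\omega_i)^a$ and $V_w(m\omega_i)^t$ all have the same graded dimensions, namely $\#(m\mathcal{C}_{\underline{\ell}_w}\cap \Z^N)$ in the relevant degree. Projective normality and arithmetic Cohen-Macaulayness are detected by the vanishing of the appropriate higher cohomology of twists of the structure and ideal sheaves, a semicontinuous condition, so both properties propagate from the special fibre $X_w^t$ to the generic fibres $X_w^a$ and $X_w$, completing the proof.

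I expect the one point demanding care to be bookkeeping rather than genuine mathematics: one must match the precise technical definition of favourable in \cite{FFoL13a} (filtration conventions, the lattice in which $P$ lives, the tensor-power/Cartan-component condition) with what the Proposition supplies, and confirm the identification of the minuscule Schubert variety $X_w$ with $\overline{\mathbb{U}.[m]}$ in the embedding coming from $V_w(m\omega_i)$. Once these are in place, the Corollary follows directly from the cited results.
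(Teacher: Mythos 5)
Your proposal follows the same route as the paper: establish that $V_w(m\omega_i)$ is favourable (Lemma~\ref{lem-fav}, via normality of $\mathcal{C}_{\underline{\ell}_w}$ and Theorem~\ref{main-thm}) and then invoke the Main Theorem of \cite{FFoL13a} to read off the flat degenerations $X_w \leadsto X_w^a \leadsto X_w^t$ together with projective normality and arithmetic Cohen--Macaulayness. You unpack the Rees-algebra construction and the semicontinuity step that underlie the cited theorem, but this is elaboration rather than a different argument, and the reduction to \cite{FFoL13a} is exactly the paper's proof.
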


On the other hand, as shown in \cite{GL96}, the Gelfand-Tsetlin polytope induces a toric degeneration $X_w^{gt}$ of the Schubert variety. Using our results on the order and chain polytope we deduce:
\begin{corollary*} Let $w \in W^i$. Then $X_w^t$ and $X_w^{gt}$ are isomorphic as toric varieties if and only if there is no reduced decomposition of $w$ of the form $w = \ldots(s_{i-1} s_{i-2})(s_{i+1}s_i s_{i-1})(s_{i+2}s_{i+1}s_i)$.
\end{corollary*}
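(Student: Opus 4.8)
The plan is to realise both $X_w^t$ and $X_w^{gt}$ as toric varieties of poset polytopes, reduce ``isomorphic as toric varieties'' to ``unimodularly equivalent polytopes'', and then translate the classical criterion for the latter into a condition on reduced words of $w$. By the favourable module proposition together with \cite{FFoL13a}, $X_w^t$ is the projective toric variety attached to the chain polytope $\mathcal{C}_{\underline{\ell}_w}$, which is normal by Corollary~\ref{cor-nor}. On the other hand, by \cite{GL96} the Gelfand--Tsetlin degeneration $X_w^{gt}$ of the minuscule Schubert variety $X_w$ is the toric variety of the face of $GT(m\omega_{n+1-i})$ corresponding to $X_w$; by the second main theorem above this face is the unique maximal Kogan face $F_{\tau}(m\omega_{n+1-i})$ of type $\tau = ww_0^{-1}$, and $F_{\tau}(m\omega_{n+1-i}) = m\mathcal{O}_{\underline{\ell}_w}$. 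Hence $X_w^{gt}$ is the toric variety of the order polytope $\mathcal{O}_{\underline{\ell}_w}$, again normal by Corollary~\ref{cor-nor}.

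Both polytopes are normal of the same dimension $N$ and share the same Ehrhart polynomial (\cite{Sta86}); in particular their dilates define projectively normal embeddings into one and the same projective space, with the intrinsic polarisation in each case. For normal polytopes the associated polarised toric varieties are isomorphic precisely when the polytopes are unimodularly equivalent, so $X_w^t \cong X_w^{gt}$ if and only if $\mathcal{C}_{\underline{\ell}_w}$ and $\mathcal{O}_{\underline{\ell}_w}$ are unimodularly equivalent. By item~(4) of the introduction, i.e.\ Lemma~\ref{lem-compare} --- the criterion of \cite{HL12} specialised to our posets --- this holds if and only if $\ell_{i-1} < i+2$ or $\ell_{i-2} < i+1$, equivalently if and only if $\mathcal{P}_{\underline{\ell}_w}$ contains no induced copy of the five-element ``X''-poset $\{a,b < c < d,e\}$.

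It then remains to show that the negation --- $\ell_{i-1} \geq i+2$ \emph{and} $\ell_{i-2} \geq i+1$ --- is equivalent to $w$ admitting a reduced decomposition of the form $\ldots(s_{i-1}s_{i-2})(s_{i+1}s_is_{i-1})(s_{i+2}s_{i+1}s_i)$. By Proposition~\ref{prop-pos} I identify $w \in W^i$ with the partition $\lambda = \lambda(w)$ in the $i \times (n+1-i)$ box with parts $\ell_i - i + 1 \geq \ell_{i-1} - i + 1 \geq \cdots \geq \ell_1 - i + 1$, so that $\mathcal{P}_{\underline{\ell}_w}$ is the cell poset of $\lambda$; the two inequalities then read exactly $(3,3,2) \subseteq \lambda$, which is precisely when the cell poset of $\lambda$ contains the X-poset (centred at the cell $(2,2)$ of $\lambda$). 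Next I use the classical bijection between the reduced words of a Grassmannian permutation with descent at $i$ and the standard Young tableaux of its shape: a tableau $T$ yields the reduced word whose letters, listed in decreasing order of the entries of $T$, are $s_{i+c}$, where $c = b-a$ is the content of the cell $(a,b)$ carrying that entry. A direct check shows that the eight letters $(s_{i-1}s_{i-2})(s_{i+1}s_is_{i-1})(s_{i+2}s_{i+1}s_i)$ are exactly the word obtained this way from the standard filling of $(3,3,2)$ with first row $1,2,3$, second row $4,5,6$ and third row $7,8$. Since this $(3,3,2)$-shape is an order ideal of the cell poset of $\lambda$, such a partial filling extends to a standard tableau of $\lambda$ if and only if $(3,3,2) \subseteq \lambda$, and the corresponding reduced word of $w$ then has the required suffix; conversely, any reduced word of $w$ of the stated form comes from a standard tableau of $\lambda$ whose eight smallest entries are forced to occupy the cells $(1,1),(1,2),(1,3),(2,1),(2,2),(2,3),(3,1),(3,2)$, hence $(3,3,2) \subseteq \lambda$. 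Concatenating the three equivalences proves the corollary.

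The technical heart is the last step, and the main point to get right is the bookkeeping of conventions --- orientation of Young diagrams, side of the action, and especially the decreasing-entry reading, which is what makes a \emph{suffix} of a reduced word correspond to an \emph{order ideal}, hence a sub-partition, of $\lambda$ --- so that the explicit eight-letter block matches the filling of $(3,3,2)$ on the nose. The remaining ingredients (an order ideal of a Young-diagram poset is a sub-partition; a standard filling of an order ideal always extends to the whole diagram), together with the verification in the second step that an abstract isomorphism of these toric varieties respects the intrinsic polarisations --- automatic here, both embeddings being the canonical ones --- are routine and should only be recorded.
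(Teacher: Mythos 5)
Your approach coincides with the paper's: both reduce the question to unimodular equivalence of $\mathcal{C}_{\underline{\ell}_w}$ and $\mathcal{O}_{\underline{\ell}_w}$, invoke the criterion of \cite{HL12} (Lemma~\ref{lem-compare}) to turn this into the condition $\ell_{i-2}<i+1$ or $\ell_{i-1}<i+2$, and then translate that condition into the statement about reduced words. The only place the paper is terse is exactly the last translation step: there the authors simply assert in Section~\ref{compare-pol} that ``the normal fans are non-isomorphic if and only if there is a reduced decomposition of $w$ of the form $\ldots(s_{i-1}s_{i-2})(s_{i+1}s_is_{i-1})(s_{i+2}s_{i+1}s_i)$'', with no argument supplied. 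Your contribution is to actually prove this reformulation, via the bijection between reduced words of the Grassmannian permutation $w$ and standard Young tableaux of shape $\lambda(w)$: the failure of the criterion $\Leftrightarrow$ $(3,3,2)\subseteq\lambda(w)$ $\Leftrightarrow$ some standard tableau has its eight smallest entries filling the $(3,3,2)$ sub-shape $\Leftrightarrow$ some reduced word has the prescribed eight-letter suffix. I checked the content bookkeeping (the decreasing-entry reading of the row-by-row filling of $(3,3,2)$ does give exactly $s_{i-1}s_{i-2}s_{i+1}s_is_{i-1}s_{i+2}s_{i+1}s_i$, and conversely the content multiset $\{-2,-1,-1,0,0,1,1,2\}$ forces the sub-shape to be $(3,3,2)$), and it is correct.

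Two minor remarks. First, the middle paragraph blurs ``isomorphic as toric varieties'' with ``unimodularly equivalent polytopes''; the clean way to get the ``only if'' direction is not via polarizations but via item~(1) of the \cite{HL12} equivalence theorem: if the polytopes are not unimodularly equivalent, they have different numbers of facets, hence the normal fans have different numbers of rays and the toric varieties cannot be isomorphic. The paper itself phrases the comparison in terms of normal fans, so this is closer to what they intend. Second, the parenthetical about the Ehrhart polynomial and ``one and the same projective space'' is not actually used and can be dropped. These do not affect correctness; your proof is a sound fleshing-out of an argument the paper leaves as an assertion.
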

We see immediately that the toric varieties are isomorphic for all Weyl group elements if $\lambda = m \omega_i$ with $i \in \{1,2,n-1,n\}$ and so especially for all Weyl group elements for $\lie{sl}_n$ with $n \leq 5$.\\
It would be interesting to compare other toric degenerations of Schubert varieties to $X_w^t$, as there is for any reduced decomposition of the longest Weyl group element an induced toric degeneration. Note for example \cite[Example 5.9]{AB04} which is similar to the above corollary, namely the degenerated Schubert variety for fundamental weights does not depend on the chosen reduced decomposition for $\lie{sl}_n$ with $n \leq 5$. \\


\textbf{Acknowledgments} G.F. is funded by the DFG priority program ''Representation theory''. Both authors would like to thank the Centre de recherches math\'ematiques in Montreal for the hospitality during the thematic semester ''New direction in Lie theory'', where this cooperation has been started.


\section{Posets and polytopes}\label{Section1}
We introduce a poset $\mathcal{P}_{\underline{\ell}}$ and consider two polytopes associated to this poset. Let $1 \leq i \leq n$ be two fixed integer and $\underline{\ell}$ an ascending sequence of integers
\[\underline{\ell} := (i-1 \leq \ell_1 \leq \ldots \leq \ell_{i} \leq n) \; \; , \; \; \text{we set }\ell_0 := i-1.\]
\noindent Let $\mathcal{P}_{\underline{\ell}}$ be the poset with vertices
\[ \{ x_{k,j} \, | \,  1 \leq k \leq i \, , \,  i \leq j \leq \ell_k \} \]
and relations:
\[
x_{k_1, j_1} \geq x_{k_2, j_2} :\Leftrightarrow k_1 \leq k_2 \text{ and } j_1 \geq j_2.
\]
Then $\mathcal{P}_{\underline{\ell}}$ has a unique minimal element $x_{i,i}$ and several maximal elements $\{ x_{k, \ell_k} \, | \, \ell_k \neq \ell_{k-1} \}$.
The number of vertices in $\mathcal{P}_{\underline{\ell}}$ is 
\[
N := \sum_{k=1}^{i} \ell_k -i +1.
\]
\begin{example}\label{exam-1}
Let $i = 4, n=6$, $\underline{\ell} = (4,5,6,6)$, then $\mathcal{P}_{\underline{\ell}}$:\\
$$
\begin{array}{ccccccc}
x_{1,4} & \rightarrow & x_{2,4} & \rightarrow & x_{3,4} & \rightarrow & x_{4,4}\\
  && \uparrow & &\uparrow && \uparrow\\
	&& x_{2,5} & \rightarrow &x_{3,5} & \rightarrow  & x_{4,5}\\
	&& && \uparrow && \uparrow\\
	&& && x_{3,6} & \rightarrow &x_{4,6} \\
\end{array}
$$
(here an arrow $x \longrightarrow y$ iff $x$ covers $y$). The maximal elements are $x_{1,4}, x_{2,5}, x_{3,6}$. 
\end{example}


\subsection{Chain and order posets}
Stanley (\cite{Sta86}) has associated two polytopes with each finite poset. We recall these two polytopes in our context. The first one, which is well known and studied is the order polytope
\[
\mathcal{O}_{\underline{\ell}} : = \{ (s_{k,j}) \in (\mathbb{R}_{\geq 0})^{N} \, | \, s_{k,j} \leq 1 \text{ for all } k,j \, , \, s_{k_1, j_1} \geq s_{k_2, j_2} \text{  if } x_{k_1, j_1} > x_{k_2, j_2},  \}
\]
We see straight that this is a $[0,1]$-polytope and hence bounded.\\
The second polytope is the chain polytope
\[
\mathcal{C}_{\underline{\ell}} : = \{ (s_{k,j}) \in (\mathbb{R}_{\geq 0})^{N} \, | \, \sum_{p} s_{k_p, j_p} \leq 1 \text{ for all chains }  x_{k_1, j_1} > \ldots > x_{k_s, j_s}  \}
\]
Of course it is enough to consider only maximal chains. Again, this polytope is bounded as every $x_{k,j}$ is in a maximal chain and hence bounded.\\

\begin{remark}
We should remark here that this construction has been generalized to marked posets by \cite{ABS11}, also related to PBW filtrations. 
But for our purpose the polytopes defined by Stanley will be enough. In fact, there is still a strong connection between marked chain polytopes and 
PBW degenerated Demazure modules on one hand and  marked order polytopes and Gelfand-Tsetlin polytopes on the other hand, as for example shown in \cite{ABS11} 
for the longest Weyl group element. The connection between certain PBW-graded Demazure modules and the marked chain polytope has been investigated for certain 
Weyl group elements (\cite{Fou14b}). We expect this connection for all Weyl group elements, this is part of ongoing research.
\end{remark}

The order and the chain polytope share several interesting properties. For any $t \in \mathbb{Z}_{\geq 1}$ we denote 
\[
\mathcal{O}_{\underline{\ell}}(t) := | t \mathcal{O}_{\underline{\ell}} \cap (\mathbb{Z}_{\geq 0})^N|  \; , \; \mathcal{C}_{\underline{\ell}}(t) := | t \mathcal{C}_{\underline{\ell}} \cap (\mathbb{Z}_{\geq 0})^N|.
\]
Then it is a classical result due to Ehrhart that both functions are actually polynomials, called the Ehrhart polynomials. In fact, this result by Ehrhart is valid for all lattice polytopes. 
\begin{theorem*}[\cite{Sta86}]\label{sta-1}
For all $\underline{\ell}$, the two polynomials coincide: $\mathcal{O}_{\underline{\ell}}(t) = \mathcal{C}_{\underline{\ell}}(t) $.
\end{theorem*}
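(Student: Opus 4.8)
The plan is to realise Stanley's \emph{transfer map} in the concrete setting of $\mathcal{P}_{\underline{\ell}}$ and to observe that, although it is only piecewise-linear, each of its linear pieces is given by a unimodular integral matrix; this immediately forces the lattice points of the dilates of $\mathcal{O}_{\underline{\ell}}$ and $\mathcal{C}_{\underline{\ell}}$ to be in bijection.

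First I would define $\phi\colon \mathcal{O}_{\underline{\ell}} \to \mathbb{R}^{N}$ by
\[
\phi(s)_{k,j} \;:=\; s_{k,j} - \max\bigl\{\, s_{k',j'} \;:\; x_{k',j'} \lessdot x_{k,j} \,\bigr\},
\]
where $x_{k',j'}\lessdot x_{k,j}$ means that $x_{k,j}$ covers $x_{k',j'}$ and the maximum over the empty set (taken only at the unique minimal vertex $x_{i,i}$) is $0$. Since in $\mathcal{P}_{\underline{\ell}}$ the vertex $x_{k,j}$ covers exactly those of $x_{k+1,j}$ and $x_{k,j+1}$ that lie in the poset, this is the explicit formula $\phi(s)_{k,j}=s_{k,j}-\max(s_{k+1,j},s_{k,j+1})$ with absent entries read as $0$. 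I would then check the two standard facts of \cite{Sta86}: (i) $\phi(\mathcal{O}_{\underline{\ell}})\subseteq \mathcal{C}_{\underline{\ell}}$ — given a chain $x_{k_1,j_1}>\dots>x_{k_p,j_p}$, refine it to a saturated chain $x_{k_1,j_1}=y_0\gtrdot y_1\gtrdot\dots\gtrdot y_m=x_{k_p,j_p}$; since $\phi(s)\ge 0$ and, for each $l<m$, $\phi(s)_{y_l}\le s_{y_l}-s_{y_{l+1}}$ while $\phi(s)_{y_m}\le s_{y_m}$, the sum telescopes to $\sum_r \phi(s)_{k_r,j_r}\le \sum_l \phi(s)_{y_l}\le s_{y_0}\le 1$; and (ii) the map $\psi\colon \mathcal{C}_{\underline{\ell}}\to\mathcal{O}_{\underline{\ell}}$ given by $\psi(s')_{k,j}:=\max\{\, s'_{y_0}+\dots+s'_{y_r}\;:\; x_{k,j}=y_0\gtrdot y_1\gtrdot\dots\gtrdot y_r\,\}$ (maximum over saturated chains descending from $x_{k,j}$) is well defined, order-preserving, and a two-sided inverse of $\phi$. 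Thus $\phi$ is a homeomorphism $\mathcal{O}_{\underline{\ell}}\xrightarrow{\ \sim\ }\mathcal{C}_{\underline{\ell}}$.

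The decisive observation concerns lattice points. Fix a linear extension $x_{a_1}<x_{a_2}<\dots<x_{a_N}$ of $\mathcal{P}_{\underline{\ell}}$. For a function $c$ assigning to each non-minimal vertex $x$ one vertex $c(x)\lessdot x$, let $R_c\subseteq\mathcal{O}_{\underline{\ell}}$ be the polyhedral set of $s$ for which $s_{c(x)}$ realises the maximum in the definition of $\phi$ at every $x$; the $R_c$ are cones cut out by homogeneous inequalities, they cover $\mathcal{O}_{\underline{\ell}}$, and on $R_c$ the map $\phi$ agrees with the linear map $s\mapsto(s_x-s_{c(x)})_x$ (the maps attached to two choices $c,c'$ agree on $R_c\cap R_{c'}$, since there $s_{c(x)}=s_{c'(x)}$, which also re-proves continuity). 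In the coordinates ordered by the linear extension, each such linear map has a $1$ on the diagonal and a single $-1$ in an earlier column in every row, hence lies in $\mathrm{GL}_N(\mathbb{Z})$; the same holds for the linear pieces of $\psi$. Consequently $\phi$ carries $\mathbb{Z}^N\cap R_c$ into $\mathbb{Z}^N$, and therefore restricts to a bijection $\mathbb{Z}^N\cap\mathcal{O}_{\underline{\ell}}\to\mathbb{Z}^N\cap\mathcal{C}_{\underline{\ell}}$. Since $\phi$ is defined by the same formula on the full cone $\bigcup_{t\ge 0}t\mathcal{O}_{\underline{\ell}}$, is homogeneous of degree $1$, and maps $t\mathcal{O}_{\underline{\ell}}$ onto $t\mathcal{C}_{\underline{\ell}}$, it likewise gives a bijection $\mathbb{Z}^N\cap t\mathcal{O}_{\underline{\ell}}\to\mathbb{Z}^N\cap t\mathcal{C}_{\underline{\ell}}$ for every $t\in\mathbb{Z}_{\ge1}$. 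Hence $\mathcal{O}_{\underline{\ell}}(t)=\mathcal{C}_{\underline{\ell}}(t)$ for all $t$, and the two Ehrhart polynomials agree.

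I expect the only genuinely delicate point to be the combinatorial bookkeeping of the regions $R_c$: checking that they tile $\mathcal{O}_{\underline{\ell}}$, that $\phi$ is well defined and continuous across their boundaries, and that $\psi\circ\phi=\mathrm{id}$ — all of which is made very concrete here by the fact that every vertex of $\mathcal{P}_{\underline{\ell}}$ covers at most two others. Everything else (nonnegativity, the chain inequalities, unimodularity, passage to dilates) is then formal. An alternative, equally valid route avoids the geometry entirely: one computes both $h^*$-polynomials through $\mathcal{P}_{\underline{\ell}}$-partitions, obtaining in each case $\sum_\pi t^{\mathrm{des}(\pi)}$ summed over linear extensions $\pi$ of $\mathcal{P}_{\underline{\ell}}$; but the transfer map is more self-contained and, as a unimodular piecewise-linear isomorphism, is also convenient for the normality statements invoked elsewhere in the paper.
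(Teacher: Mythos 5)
The paper gives no proof here at all: it simply cites \cite{Sta86}. Your argument is a correct and complete rendition of Stanley's transfer-map proof, which is indeed the argument behind that citation, so you have supplied exactly the right thing. The only slip is cosmetic: in $\mathcal{P}_{\underline{\ell}}$ the element $x_{k,j}$ covers $x_{k+1,j}$ and $x_{k,j-1}$, not $x_{k,j+1}$ (this follows from the defining relation $x_{k_1,j_1}\geq x_{k_2,j_2}\Leftrightarrow k_1\leq k_2,\ j_1\geq j_2$, and is visible in Example~\ref{exam-1}); but this occurs only in an illustrative aside and does not affect the telescoping estimate, the construction of the inverse $\psi$ via greedy descending saturated chains, or the piecewise-unimodularity observation that produces the bijection $\mathbb{Z}^N\cap t\mathcal{O}_{\underline{\ell}}\to\mathbb{Z}^N\cap t\mathcal{C}_{\underline{\ell}}$ for every $t$. (Incidentally, unimodularity of the pieces is slightly more than you need: that both $\phi$ and $\psi$ are piecewise given by integer matrices already forces the lattice-point bijection.) The alternative route you sketch through $P$-partitions and the common $h^*$-polynomial is equally valid and is also in Stanley.
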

This implies that the $t$-dilation of the polytopes $\mathcal{O}_{\underline{\ell}}, \mathcal{C}_{\underline{\ell}} $ do have the same number of lattice points. 


\subsection{Faces of the polytopes}
Another result due to Stanley is concerned with the number of vertices, or $0$-dimensional faces, in these polytopes.
\begin{lemma*}[\cite{Sta86}]
The number of vertices in $\mathcal{O}_{\underline{\ell}}$ and $\mathcal{C}_{\underline{\ell}}$ coincide.
\end{lemma*}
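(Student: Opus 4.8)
The plan is to describe the vertices of each polytope combinatorially in terms of the poset $\mathcal{P}_{\underline{\ell}}$ and then match the two descriptions by an elementary bijection of the underlying combinatorial data. First I would show that the vertices of $\mathcal{O}_{\underline{\ell}}$ are exactly the characteristic vectors $\mathbf{1}_F=(s_{k,j})$, with $s_{k,j}=1$ if $x_{k,j}\in F$ and $s_{k,j}=0$ otherwise, where $F$ ranges over the order filters (up-sets) of $\mathcal{P}_{\underline{\ell}}$. Indeed, any point of $\mathcal{O}_{\underline{\ell}}$ with all coordinates in $\{0,1\}$ is automatically a vertex, being already a vertex of the ambient cube $[0,1]^N\supseteq\mathcal{O}_{\underline{\ell}}$; and the defining inequalities $s_{k_1,j_1}\ge s_{k_2,j_2}$ for $x_{k_1,j_1}>x_{k_2,j_2}$ say precisely that the support of such a point is a filter. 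Conversely, if $(s_{k,j})\in\mathcal{O}_{\underline{\ell}}$ has a coordinate equal to some $c\in(0,1)$, then shifting the point by $\pm\varepsilon$ simultaneously on the set $\{x_{k,j}\mid s_{k,j}=c\}$ keeps it inside $\mathcal{O}_{\underline{\ell}}$ for all small $\varepsilon$, exhibiting it as a midpoint; hence every vertex has coordinates in $\{0,1\}$, which proves the claim.

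Next I would show that the vertices of $\mathcal{C}_{\underline{\ell}}$ are exactly the characteristic vectors $\mathbf{1}_A$ of the antichains $A$ of $\mathcal{P}_{\underline{\ell}}$. As above, $\mathbf{1}_A$ is a $\{0,1\}$-point, hence a vertex as soon as it lies in $\mathcal{C}_{\underline{\ell}}$, and it does because every chain meets an antichain in at most one element; conversely any $\{0,1\}$-point of $\mathcal{C}_{\underline{\ell}}$ has support containing no two comparable elements, i.e.\ an antichain. The only non-formal point is that $\mathcal{C}_{\underline{\ell}}$ has no further vertices, and for this I would invoke Stanley's transfer map $\Phi\colon\mathcal{O}_{\underline{\ell}}\to\mathcal{C}_{\underline{\ell}}$ given by $(\Phi s)_{k,j}=s_{k,j}-\max\{\,s_{k',j'}\mid x_{k,j}>x_{k',j'}\ \text{a covering relation}\,\}$ (empty maximum $=0$): it is a piecewise-linear bijection that is linear and unimodular on each closed cell of the canonical triangulation of $\mathcal{O}_{\underline{\ell}}$ indexed by the linear extensions of $\mathcal{P}_{\underline{\ell}}$, and a direct check gives $\Phi(\mathbf{1}_F)=\mathbf{1}_{\min F}$, the characteristic vector of the antichain of minimal elements of $F$. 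Since every triangulation of a polytope uses all of its vertices, the vertex set of $\mathcal{C}_{\underline{\ell}}$ is contained in the $\Phi$-image of the vertices of the triangulation of $\mathcal{O}_{\underline{\ell}}$, namely in $\{\mathbf{1}_{\min F}\mid F\ \text{a filter}\}\subseteq\{\mathbf{1}_A\mid A\ \text{an antichain}\}$; together with the reverse inclusion just noted, these are precisely the vertices of $\mathcal{C}_{\underline{\ell}}$.

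Finally I would use the classical bijection between antichains and filters of a finite poset: an antichain $A$ is sent to the filter $\langle A\rangle=\{p\mid p\ge a\ \text{for some}\ a\in A\}$ it generates, and a filter $F$ to its set $\min(F)$ of minimal elements, which is an antichain; these maps are mutually inverse. Combining the three steps, the number of vertices of $\mathcal{O}_{\underline{\ell}}$ equals the number of filters of $\mathcal{P}_{\underline{\ell}}$, which equals the number of antichains of $\mathcal{P}_{\underline{\ell}}$, which equals the number of vertices of $\mathcal{C}_{\underline{\ell}}$. The routine parts are this last bijection and the ``$\{0,1\}$-points are vertices'' observations; the step I expect to be the main obstacle is the linearity and unimodularity of the transfer map on each cell of the order-polytope triangulation (equivalently, the assertion that every vertex of $\mathcal{C}_{\underline{\ell}}$ has coordinates in $\{0,1\}$), which is exactly the nontrivial input from Stanley's theory of poset polytopes.
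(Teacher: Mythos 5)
Your argument is correct, and it is essentially Stanley's original proof from \cite{Sta86}: the paper states this lemma only as a citation and gives no proof of its own, so there is nothing internal to compare against. The three ingredients you use — vertices of the order polytope are characteristic vectors of filters, vertices of the chain polytope are characteristic vectors of antichains (with the nontrivial half supplied by the piecewise-unimodular transfer map and the canonical triangulation by linear extensions), and the standard filter--antichain bijection via $F\mapsto\min F$ — are exactly Stanley's Corollary~1.3, Theorem~2.2/Corollary~2.3, and the accompanying bijection. You have also correctly identified the one step that genuinely requires Stanley's machinery (ruling out fractional vertices of the chain polytope); the perturbation argument for the order polytope and the observation that $\{0,1\}$-points of a subpolytope of the cube are automatically vertices are both sound.
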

While the higher dimensional faces are way more complicated to understand, we can at least give formulas for the number of facets, the $n-1$-dimensional faces.
\begin{lemma}
Let $\underline{\ell} = (i-1 \leq \ell_1 \leq \ldots \leq \ell_{i} \leq n)$, then the number of facets of $\mathcal{O}_{\underline{\ell}}$ is:
\[
 1 + \sharp \{ k \, | \, \ell_k \neq \ell_{k+1} \} + 1 + \sum_{\ell_k \geq i} (\ell_k - i) + \sum_{ k < i} (\ell_k +1 - i) 
\]
and the number of facets of $\mathcal{C}_{\underline{\ell}}$ is:
\[
N + \sum_{\ell_k \neq \ell_{k-1}} \binom{\ell_k - k} {i - k}
\]
(here we set $\ell_{0} = i-1$).
\end{lemma}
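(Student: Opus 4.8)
The plan is to read the facets of each polytope off Stanley's classifications of the facets of order and of chain polytopes (\cite{Sta86}), and then to count them using the explicit combinatorics of $\mathcal{P}_{\underline{\ell}}$.

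For $\mathcal{O}_{\underline{\ell}}$ I would use that, for any finite poset $P$, the facets of its order polytope are in bijection with the edges of the Hasse diagram of $P\cup\{\hat 0,\hat 1\}$; concretely, with the minimal elements of $P$ (inequalities $s_a\geq 0$), the maximal elements of $P$ (inequalities $s_a\leq 1$) and the covering relations of $P$ (inequalities $s_a\geq s_b$ for $b\lessdot a$). Since $\mathcal{P}_{\underline{\ell}}$ has the unique minimal element $x_{i,i}$ and maximal elements $\{x_{k,\ell_k}\mid\ell_k\neq\ell_{k-1}\}$ (as recorded above, with $\ell_0=i-1$), the first two families account for $1$ and, after a small rearrangement of the count of jumps in $\underline{\ell}$, for $1+\sharp\{k\mid\ell_k\neq\ell_{k+1}\}$. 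For the covers, a direct inspection of the relation $x_{k_1,j_1}\geq x_{k_2,j_2}\Leftrightarrow k_1\leq k_2,\ j_1\geq j_2$ shows that $x_{k,j}$ covers exactly $x_{k+1,j}$ — present iff $k<i$, the monotonicity of $\underline{\ell}$ being what guarantees $x_{k+1,j}\in\mathcal{P}_{\underline{\ell}}$ — and $x_{k,j-1}$, present iff $j\geq i+1$; summing the first kind of cover over all admissible $(k,j)$ gives $\sum_{k<i}(\ell_k+1-i)$ and the second gives $\sum_{\ell_k\geq i}(\ell_k-i)$. Adding the four contributions yields the stated formula.

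For $\mathcal{C}_{\underline{\ell}}$ I would invoke Stanley's companion statement: the facets of the chain polytope of $P$ are the coordinate hyperplanes $s_a=0$, one for each $a\in P$, together with the hyperplanes $\sum_{a\in C}s_a=1$, one for each maximal chain $C$ of $P$. The first family contributes exactly $N=|\mathcal{P}_{\underline{\ell}}|$ facets. A maximal chain of $\mathcal{P}_{\underline{\ell}}$ is a saturated chain from the minimum $x_{i,i}$ to some maximal element $x_{k,\ell_k}$, equivalently a monotone lattice path from $(i,i)$ to $(k,\ell_k)$ each step of which either decreases the row index by $1$ or increases the column index by $1$. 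The key observation is that every such path automatically stays inside $\mathcal{P}_{\underline{\ell}}$: at an intermediate vertex $(k',j')$ one has $k\leq k'\leq i$ and $i\leq j'\leq\ell_k\leq\ell_{k'}$, precisely because $\underline{\ell}$ is non-decreasing, so $x_{k',j'}$ is a genuine element of the poset. Hence the number of maximal chains ending at $x_{k,\ell_k}$ equals the plain binomial coefficient $\binom{(\ell_k-i)+(i-k)}{i-k}=\binom{\ell_k-k}{i-k}$, and summing over the maximal elements (those $x_{k,\ell_k}$ with $\ell_k\neq\ell_{k-1}$) yields the second summand.

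Past the two appeals to \cite{Sta86} this is essentially book-keeping, so the substantive points are localised. The one I expect to need a genuine, if short, argument is the claim that no staircase bound is ever violated along a monotone path; this is what collapses the chain count to a single binomial coefficient instead of a reflection- or Lindstr\"om--Gessel--Viennot-type alternating sum, and it is exactly where monotonicity of $\underline{\ell}$ enters (the analogous point in the order-polytope count being that $x_{k+1,j}$ lies in $\mathcal{P}_{\underline{\ell}}$ whenever $x_{k,j}$ does). The remaining care is bureaucratic: fixing the orientation convention in the defining inequalities of $\mathcal{O}_{\underline{\ell}}$, and checking that the degenerate rows $\ell_k=i-1$ are absorbed by the convention $\ell_0=i-1$ and by the indicated ranges of summation. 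I do not anticipate a real obstacle beyond this.
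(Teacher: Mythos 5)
Your proposal is correct and follows essentially the same route as the paper's own proof, which simply cites Stanley's facet descriptions for order and chain polytopes (facets of $\mathcal{O}_P$ from minimal elements, maximal elements, and covers; facets of $\mathcal{C}_P$ from coordinate hyperplanes and maximal chains) and then declares the count an ``easy computation.'' The one point you single out as genuinely needing an argument --- that monotonicity of $\underline{\ell}$ forces every monotone lattice path from $x_{i,i}$ to $x_{k,\ell_k}$ to stay inside the staircase, so the chain count collapses to a plain binomial coefficient --- is exactly the detail the paper suppresses, and your treatment of it is right.
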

\begin{proof}
In \cite{Sta86}, it is shown that the number of facets in the order polytope is equal to the number of cover relations plus the number of minimal elements plus the number of maximal elements. Further, the number of facets in the chain polytope is equal to the number of vertices (of the poset) plus the number of maximal chains in the poset. The easy computation of these numbers proves the lemma.
\end{proof}
\begin{example}
We turn again to Example~\ref{exam-1}. $i=4, n=6$, $\underline{\ell} = (4,5,6,6)$. Then the number of facets in the order polytope $\mathcal{O}_{\underline{\ell}}$ is
\[
1 + 2 + 1 + (4-4) + (5-4) + (6-4) + (6-4) + (5-4) + (6-4) + (7-4) = 15, 
\] 
while the number of facets in the chain polytope $\mathcal{C}_{\underline{\ell}}$ is
\[
9 + \binom{3}{3} + \binom{5-2}{4-2} + \binom{6-3}{4-3} = 9 + 1 + 3 + 3 = 16.
\]
\end{example}
We see that these numbers are not equal in general, in fact in \cite{HL12} the following equivalences are proven
\begin{theorem*}\label{thm-equiv} The following are equivalent
\begin{enumerate}
\item The number of facets in  $\mathcal{O}_{\underline{\ell}}$ and $\mathcal{C}_{\underline{\ell}}$ is the same.
\item The poset $\mathcal{P}_{\underline{\ell}}$ does not contain a subposet $\{x_1, x_2, x_3, x_4, x_5\}$ such that $x_1, x_2 > x_3 > x_4, x_5$.
\item The polytopes $\mathcal{O}_{\underline{\ell}}$ and $\mathcal{C}_{\underline{\ell}}$ have the same $\mathbf{f}$-vector.
\item The polytopes $\mathcal{O}_{\underline{\ell}}$ and $\mathcal{C}_{\underline{\ell}}$ are unimodular equivalent.
\end{enumerate}
\end{theorem*}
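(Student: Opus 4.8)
The plan is to prove the cycle of implications $(4)\Rightarrow(3)\Rightarrow(1)\Rightarrow(2)\Rightarrow(4)$; only the last two carry content. For $(4)\Rightarrow(3)$: a unimodular equivalence is an affine isomorphism of polytopes, so it identifies their face lattices and in particular their $\mathbf{f}$-vectors. For $(3)\Rightarrow(1)$: the number of facets is one coordinate of the $\mathbf{f}$-vector.

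For $(1)\Rightarrow(2)$ I would work with the facet formulas of the preceding Lemma. The quickest route is to subtract them: $\#\mathrm{facets}(\mathcal{C}_{\underline{\ell}})-\#\mathrm{facets}(\mathcal{O}_{\underline{\ell}})$ is a nonnegative quantity that vanishes exactly when $\ell_{i-2}<i+1$ or $\ell_{i-1}<i+2$, and this last condition is equivalent to $\mathcal{P}_{\underline{\ell}}$ containing no subposet $x_1,x_2>x_3>x_4,x_5$ --- such an $X$ occurs precisely when $\ell_{i-2}\ge i+1$ and $\ell_{i-1}\ge i+2$, with $\{x_{i-2,i+1},x_{i-1,i+2},x_{i-1,i+1},x_{i-1,i},x_{i,i+1}\}$ (middle element $x_{i-1,i+1}$) as a witnessing subposet. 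This last equivalence is the content of Lemma~\ref{lem-compare}. Conceptually, after adjoining a global minimum and maximum to $\mathcal{P}_{\underline{\ell}}$ the order-polytope facet count is the number of edges of the Hasse diagram of the resulting bounded poset $\widehat{\mathcal{P}}_{\underline{\ell}}$, while the chain-polytope facet count is $|\widehat{\mathcal{P}}_{\underline{\ell}}|-2$ plus the number of saturated chains from the minimum to the maximum; the two coincide iff the number of such chains equals (number of edges) $-\,|\widehat{\mathcal{P}}_{\underline{\ell}}|+2$, an inequality that holds as ``$\geq$'' for every bounded poset and becomes strict the moment an $X$ appears, since the parts of $\widehat{\mathcal{P}}_{\underline{\ell}}$ below and above $x_3$ then each carry at least two saturated chains.

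The implication $(2)\Rightarrow(4)$ is the substance. Having identified (2) with $\ell_{i-1}<i+2$ or $\ell_{i-2}<i+1$, I would exploit the resulting rigidity: in the first case every column of $\mathcal{P}_{\underline{\ell}}$ but the last has at most two boxes, in the second every column but the last two has at most one box, so $\mathcal{P}_{\underline{\ell}}$ is a chain with a bounded number of short side-columns, and I would write down an explicit element of $\mathrm{GL}_{N}(\mathbb{Z})$ --- a composition of coordinate shears together with a flip $s\mapsto \mathbf{1}-s$ on the top coordinates of any complete-bipartite block --- carrying $\mathcal{O}_{\underline{\ell}}$ onto $\mathcal{C}_{\underline{\ell}}$. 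The reason such a map exists is that Stanley's transfer map $\mathcal{O}_{\underline{\ell}}\to\mathcal{C}_{\underline{\ell}}$ (the piecewise-linear bijection already yielding the coincidence of Ehrhart polynomials and of vertex numbers recalled earlier) is affine on each chamber of the canonical triangulation; under condition (2) one can glue these pieces --- allowing the flip moves at complete-bipartite blocks --- into a single global unimodular map, whereas an $X$ is exactly the obstruction to doing so. For a general finite poset this would become an induction on the number of elements, peeling off a maximal element with a unique lower cover, or dually a minimal element with a unique upper cover, or else performing the flip at a complete-bipartite block.

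The hard part will be this last step: producing a genuine, not merely piecewise-linear, lattice isomorphism, and checking that the elementary operation used at each stage is simultaneously compatible with $\mathcal{O}_{\underline{\ell}}$ and with $\mathcal{C}_{\underline{\ell}}$ --- which is exactly where the absence of the forbidden subposet enters, an $X$ being precisely what obstructs the compatibility. A secondary technical point is the equality analysis in $(1)\Rightarrow(2)$, namely characterizing which bounded posets meet the chain-count identity; for the concrete posets $\mathcal{P}_{\underline{\ell}}$ both points reduce to the elementary considerations above.
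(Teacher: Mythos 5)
The paper does not prove this theorem: it is cited from Hibi--Li~\cite{HL12} (the sentence immediately before the statement reads ``in \cite{HL12} the following equivalences are proven''), so there is no in-paper proof for your attempt to track. Your implications $(4)\Rightarrow(3)\Rightarrow(1)$ are correct, and for $(1)\Rightarrow(2)$ subtracting the facet formulas of the preceding Lemma is a legitimate shortcut for the family $\mathcal{P}_{\underline{\ell}}$; the more conceptual inequality you sketch (chain-polytope facet count always at least the order-polytope facet count, with equality iff no $X$ occurs) is indeed the content of Hibi--Li's facet-counting lemma, so the structure of your cycle of implications matches the reference.

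The genuine gap is the one you flag yourself: $(2)\Rightarrow(4)$. You assert that a global element of $\mathrm{GL}_N(\Z)$ exists by ``gluing'' the chambers of Stanley's piecewise-linear transfer map, allowing a flip $s\mapsto \mathbf{1}-s$ at complete-bipartite blocks, but you exhibit no matrix and verify no compatibility with the two facet systems. The claim that absence of an $X$ lets the piecewise-linear transfer map be straightened into a single affine unimodular map is precisely the nontrivial content of the theorem, and ``one can glue these pieces'' does not discharge it: the transfer map is linear only chamber-by-chamber in the order-complex triangulation, the linear pieces genuinely differ across walls, and why the $X$-free hypothesis (together with the flip move) repairs all mismatches requires an explicit inductive construction --- in the reference this is an induction on $|P|$, peeling an element with a unique cover and performing an explicit lattice base change at each step. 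For the concrete posets $\mathcal{P}_{\underline{\ell}}$ the honest completion of your argument is either to carry out the two characterizing cases $\ell_{i-1}<i+2$ and $\ell_{i-2}<i+1$ by hand, where $\mathcal{P}_{\underline{\ell}}$ is nearly a chain and the change of coordinates can be written explicitly, or simply to cite \cite{HL12} as the paper does.
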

\begin{remark} The condition that the poset does not contain a subposet $\{x_1, x_2, x_3, x_4, x_5\}$ such that $x_1, x_2 > x_3 > x_4, x_5$ translates in our case to: $\ell_{i-2} < i+1$ or $\ell_{i-1} < i+2$.
\end{remark}


\subsection{Dilation and normality}
Here we consider dilations of our polytopes, for any $m \in \mathbb{Z}_{\geq 1}$:
\[
m\mathcal{O}_{\underline{\ell}}  = \{ (s_{k,j}) \in (\mathbb{R}_{\geq 0})^{N} \, | \, s_{k_1, j_1} \geq s_{k_2, j_2} \text{  if } x_{k_1, j_1} > x_{k_2, j_2} \, , \, s_{k,j} \leq m \, \forall \, k,j\}
\]
\[
m\mathcal{C}_{\underline{\ell}}  = \{ (s_{k,j}) \in (\mathbb{R}_{\geq 0})^{N} \, | \, \sum_{p} s_{k_p, j_p} \leq m \text{ for all chains }  x_{k_1, j_1} > \ldots > x_{k_s, j_s}  \}.
\]

\begin{lemma}\label{lem-mink}
For any $m \in \mathbb{Z}_{\geq 1}$, then we have for the Minkowski sum of the lattice points:
\[
(m+1)\mathcal{C}_{\underline{\ell}} \cap \mathbb{Z}_{\geq 0}^N =  \left(m\mathcal{C}_{\underline{\ell}}  \cap \mathbb{Z}_{\geq 0}^N \right) +  \left(\mathcal{C}_{\underline{\ell}}  \cap \mathbb{Z}_{\geq 0}^N\right)
\]
\end{lemma}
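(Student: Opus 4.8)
The plan is to establish the two inclusions separately. The inclusion ``$\supseteq$'' is immediate and I would dispatch it first: every defining inequality of $\cC_{\underline{\ell}}$ is linear, so if all maximal-chain sums of $\bt$ are $\le m$ and all those of a second vector are $\le 1$, then all maximal-chain sums of their sum are $\le m+1$, while nonnegativity and integrality are obviously preserved. All the work is therefore in ``$\subseteq$'': I must show that an arbitrary lattice point $\bs \in (m+1)\cC_{\underline{\ell}}$ can be written as $\bs = \bt + \mathbf{1}_A$ with $\bt \in m\cC_{\underline{\ell}} \cap \Z_{\ge 0}^N$ and $A$ an antichain of $\mathcal P_{\underline{\ell}}$. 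Here I use the elementary fact that the lattice points of $\cC_{\underline{\ell}}$ are exactly the indicator vectors $\mathbf{1}_A$ of antichains $A$ (singleton chains force $0/1$ entries, and a pair of comparable elements forbids two coordinates equal to $1$).

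First I would reduce to the case $s_x \ge 1$ for all $x$, by replacing $\mathcal P_{\underline{\ell}}$ with the subposet $Q := \{x : s_x \ge 1\}$ on the support of $\bs$, with the induced order. This is harmless: a chain sum of $\bs$ only involves coordinates in $Q$, an antichain of $Q$ is an antichain of $\mathcal P_{\underline{\ell}}$, and I will in any case need $A \subseteq Q$ so that $\bs - \mathbf{1}_A$ stays nonnegative. Next I would introduce the height function $h\colon Q \to \Z_{\ge 1}$, $h(x) := \max\{\sum_{y \in D} s_y : D \text{ a chain in } Q \text{ with maximum element } x\}$. Since $\bs \in (m+1)\cC_{\underline{\ell}}$, every chain sum is at most $m+1$, so $1 \le h(x) \le m+1$; and because every $s_x \ge 1$, the function $h$ strictly increases along each cover relation of $Q$, hence along every strict relation of $Q$. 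I then set $A := \{x \in Q : h(x) = m+1\}$. Strict monotonicity of $h$ immediately shows $A$ is an antichain.

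Finally I would check that $\bt := \bs - \mathbf{1}_A$ works. It is a nonnegative integer vector because $A \subseteq Q = \operatorname{supp}(\bs)$, and for any maximal chain $C$ of $\mathcal P_{\underline{\ell}}$ there are two cases. Either $\sum_{x \in C} s_x \le m$, in which case the chain inequality for $\bt$ is clear; or $\sum_{x \in C} s_x = m+1$, in which case $C \cap Q \ne \emptyset$ (as $m+1 \ge 2$) and its top element $z$ satisfies $h(z) \ge \sum_{x \in C} s_x = m+1$ and $h(z) \le m+1$, so $z \in A \cap C$ and $\sum_{x \in C}(s_x - \mathbf{1}_A(x)) = (m+1) - |A \cap C| \le m$. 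Hence $\bt \in m\cC_{\underline{\ell}}$ and $\bs = \bt + \mathbf{1}_A$ is the required decomposition.

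I do not expect a serious obstacle; the one point that needs care is producing an antichain that is simultaneously contained in $\operatorname{supp}(\bs)$ (so that subtracting its indicator keeps the point nonnegative) and large enough to ``absorb'' every chain whose $\bs$-sum equals $m+1$. Passing to the support at the outset handles the first requirement for free and is exactly what makes $h$ strictly monotone, which in turn forces $A$ to be an antichain and lets the top-element argument go through. Iterating the lemma yields the integer decomposition property, hence normality, of $\cC_{\underline{\ell}}$ (Corollary~\ref{cor-nor}), in line with the classical fact that chain polytopes are compressed.
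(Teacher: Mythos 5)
Your proof is correct, and it reaches the conclusion by a genuinely different decomposition than the paper's. The paper also reduces to the support $\operatorname{supp}(\bs)$, but then peels off the indicator of the set $M$ of all \emph{minimal} elements of the support, i.e.\ it takes $\bt=\mathbf{1}_M$. To verify $\bs-\bt\in m\mathcal{C}_{\underline{\ell}}$ for a chain $P$ disjoint from $M$, the paper rebuilds a comparison chain $P'$ by discarding the zero-valued bottom of $P$ and splicing in a minimal element of the support lying below the lowest nonzero entry of $P$, then transfers the bound from $P'$ back to $P$. Your argument instead introduces the height function $h$ on the support and subtracts the indicator of the top layer $A=\{x : h(x)=m+1\}$: strict monotonicity of $h$ along covers (forced by $s_x\ge 1$ on the support) gives the antichain property for free, and the verification that every chain of $\bs$-sum $m+1$ meets $A$ is a one-line computation at the top element of the chain's support, with no need to construct an auxiliary chain. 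Both decompositions are valid and neither is a reparametrization of the other (a minimal support element need not lie on a saturated chain, and a height-$(m+1)$ element need not be minimal). The paper's choice of $M$ is slightly simpler to state; your height-function approach localizes the argument to a single chain and makes the two-case check cleaner, and it is the more standard route to the compressedness/IDP of chain polytopes. One small stylistic point: the preliminary restriction to the subposet $Q$ and the remark that lattice points of $\mathcal{C}_{\underline{\ell}}$ are antichain indicators are both helpful scaffolding but not strictly needed — you could define $h$ on all of $\mathcal{P}_{\underline{\ell}}$ (allowing $h(x)=s_x=0$ off the support) and the argument still goes through, since $h(x)=m+1\ge 2$ already forces $s_x\ge 1$.
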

\begin{proof}
Let $\bs \in (m+1)\mathcal{C}_{\underline{\ell}}  \cap \mathbb{Z}_{\geq 0}^N$, and denote 
\[
\operatorname{supp }(\bs)=\{ x_{k,j}: s_{k,j}\neq 0\}.
\]
Then $\operatorname{supp }(\bs)$ is again a poset. If $M$ denotes the subset of minimal elements in $\operatorname{supp }(\bs)$, then the elements of $M$ are pairwise incomparable. We define $\bt \in \mathbb{Z}_{\geq 0}^N $ with $t_{k,j} = 1$ if $x_{k,j} \in M$, $t_{k,j} = 0$ else.\\
Since the elements of $M$ are pairwise not comparable, every chain in $\mathcal{P}_{\underline{\ell}}$ has at most one element from $M$. So $ \bt \in \mathcal{C}_{\underline{\ell}}  \cap \mathbb{Z}_{\geq 0}^N$. To prove the theorem, it is enough to show that $\bs - \bt \in m\mathcal{C}_{\underline{\ell}}  \cap \mathbb{Z}_{\geq 0}^N$.\\
Let $P$ be a maximal chain in $\mathcal{P}_{\underline{\ell}}$. Then 
\[
\sum_{x_{k,j} \in P} s_{k,j} \leq m+1,
\] 
and so if $P \cap M \neq \emptyset$ then 
\begin{eqnarray}\label{eq-mink}
\sum_{x_{k,j} \in P}  s_{k,j} - t_{k,j} \leq m.
\end{eqnarray}
Suppose now $P \cap M = \emptyset$. Let $x_{k,j}$ be the minimal element in $P$ with $s_{k,j} \neq 0$, then by construction of $M$, there exists $x_{k', j'} \in M$ with $x_{k', j'} < x_{k,j}$ and $s_{k', j'} \neq 0$. So we can construct a new chain $P'$ consisting of $\{ x_{p,q} \in P \, | \, x_{p,q} \geq x_{k,j} \} \cup \{x_{k',j'} \}$. Then 
\[
\sum_{x_{k,j} \in P} s_{k,j} < \sum_{x_{k,j} \in P'} s_{k,j} 
\]
and since  $P' \cap M \neq  \emptyset$ we have with \eqref{eq-mink}
\[
\sum_{x_{k,j} \in P}  s_{k,j} -  t_{k,j} \leq \sum_{x_{k,j} \in P'}  s_{k,j}  - t_{k,j} \leq m. 
\]
This implies that $\bs - \bt \in m\mathcal{C}_{\underline{\ell}}  \cap \mathbb{Z}_{\geq 0}^N$.
\end{proof}

The following should be well-known from the literature, we include the proof for the readers convenience:
\begin{lemma}\label{lem-mink2}
For any $m \in \mathbb{Z}_{\geq 1}$:
\[
(m+1)\mathcal{O}_{\underline{\ell}} \cap \mathbb{Z}_{\geq 0}^N =  \left(m\mathcal{O}_{\underline{\ell}}  \cap \mathbb{Z}_{\geq 0}^N\right) + \left( \mathcal{O}_{\underline{\ell}} \cap \mathbb{Z}_{\geq 0}^N \right)
\]
\end{lemma}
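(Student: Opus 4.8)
The plan is to mimic the proof of Lemma~\ref{lem-mink}, replacing the chain inequalities by the order inequalities, which in fact streamlines the argument. The inclusion $\supseteq$ is immediate: if $\mathbf{s}\in m\mathcal{O}_{\underline{\ell}}\cap\mathbb{Z}_{\geq 0}^N$ and $\mathbf{t}\in\mathcal{O}_{\underline{\ell}}\cap\mathbb{Z}_{\geq 0}^N$, then $\mathbf{s}+\mathbf{t}$ still satisfies $s_{k,j}+t_{k,j}\leq m+1$ for all $k,j$, and $s_{k_1,j_1}+t_{k_1,j_1}\geq s_{k_2,j_2}+t_{k_2,j_2}$ whenever $x_{k_1,j_1}>x_{k_2,j_2}$; hence $\mathbf{s}+\mathbf{t}\in(m+1)\mathcal{O}_{\underline{\ell}}$. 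So all the content is in the reverse inclusion.

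For $\subseteq$, I would take $\mathbf{s}\in(m+1)\mathcal{O}_{\underline{\ell}}\cap\mathbb{Z}_{\geq 0}^N$, i.e. an integer vector with $0\le s_{k,j}\leq m+1$ and $s_{k_1,j_1}\geq s_{k_2,j_2}$ whenever $x_{k_1,j_1}>x_{k_2,j_2}$. As in the previous proof, set $\operatorname{supp}(\mathbf{s})=\{x_{k,j}\colon s_{k,j}\neq 0\}$; since $\mathbf{s}$ is order-preserving (larger poset elements carry the larger coordinates in our convention for $\mathcal{O}_{\underline{\ell}}$), this set is an order filter, i.e. an up-set, of $\mathcal{P}_{\underline{\ell}}$. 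Define $\mathbf{t}\in\mathbb{Z}_{\geq 0}^N$ by $t_{k,j}=1$ if $x_{k,j}\in\operatorname{supp}(\mathbf{s})$ and $t_{k,j}=0$ otherwise. Being the indicator function of an up-set, $\mathbf{t}$ is order-preserving with entries in $\{0,1\}$, hence $\mathbf{t}\in\mathcal{O}_{\underline{\ell}}\cap\mathbb{Z}_{\geq 0}^N$.

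It then remains to verify $\mathbf{s}-\mathbf{t}\in m\mathcal{O}_{\underline{\ell}}\cap\mathbb{Z}_{\geq 0}^N$, after which $\mathbf{s}=(\mathbf{s}-\mathbf{t})+\mathbf{t}$ is the desired decomposition. Entrywise one has $(s-t)_{k,j}=\max(s_{k,j}-1,0)$, so the entries are nonnegative integers; and since $m\ge 1$, the value $m+1$ is lowered to $m$ while every smaller value stays $\le m$, so $(s-t)_{k,j}\le m$ for all $k,j$. For the order relations, if $x_{k_1,j_1}>x_{k_2,j_2}$ then $s_{k_1,j_1}\geq s_{k_2,j_2}$ and monotonicity of $t\mapsto\max(t-1,0)$ gives $(s-t)_{k_1,j_1}\geq(s-t)_{k_2,j_2}$. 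Thus $\mathbf{s}-\mathbf{t}\in m\mathcal{O}_{\underline{\ell}}$.

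I do not expect a genuine obstacle here; the single point requiring care is the direction of the order inequalities defining $\mathcal{O}_{\underline{\ell}}$, which is precisely why passing to the support — rather than, say, the set of coordinates attaining the maximum — produces an up-set and hence a legitimate lattice point of $\mathcal{O}_{\underline{\ell}}$. As an alternative route one could instead invoke the classical fact that the canonical triangulation of an order polytope into the unimodular simplices indexed by the linear extensions of $\mathcal{P}_{\underline{\ell}}$ yields the integer decomposition property directly, but the explicit argument above is shorter and runs parallel to Lemma~\ref{lem-mink}.
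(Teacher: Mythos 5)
Your proof is correct and follows essentially the same route as the paper's: both take $\mathbf{t}$ to be the $0$--$1$ indicator of the support of $\mathbf{s}$, observe that the support is an up-set so $\mathbf{t}\in\mathcal{O}_{\underline{\ell}}$, and conclude $\mathbf{s}-\mathbf{t}\in m\mathcal{O}_{\underline{\ell}}$. You simply spell out the last step (via monotonicity of $x\mapsto\max(x-1,0)$), which the paper dismisses as obvious.
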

\begin{proof}
Let $\bs \in (m+1)\mathcal{O}_{\underline{\ell}} \cap \mathbb{Z}_{\geq 0}^N$, then we define $\bt \in \mathbb{Z}_{\geq 0}^N$ as
\[
t_{k,j} := \begin{cases} 1 & \text{ if }  s_{k,j} \neq 0 \\ 0 & \text{ if } s_{k,j} = 0 \end{cases}
\]
Since $\bs \in(m+1)\mathcal{O}_{\underline{\ell}} \cap \mathbb{Z}_{\geq 0}^N$, we have $s_{k,j} \neq 0 \Rightarrow s_{k',j'} \neq 0$ for all $x_{k', j'} > x_{k,j}$. This implies that $\bt \in \mathcal{O}_{\underline{\ell}} \cap \mathbb{Z}_{\geq 0}^N$. It is furthermore obvious that
\[
\bs - \bt \in m\mathcal{O}_{\underline{\ell}} \cap \mathbb{Z}_{\geq 0}^N. 
\] 
\end{proof}
\begin{definition}
A lattice polytope $P$ is called \textit{normal} if 
\[
\forall \, m \geq 1 \text{ and } \forall \; p \in mP \cap \mathbb{Z}^{\operatorname{dim } P} \, : \, \exists \; p_1, \ldots, p_m \in P \cap \mathbb{Z}^{\operatorname{dim } P} \, \text{ with }  \, p = p_1 + \ldots  + p_m.
\]
\end{definition} 
So every lattice point in the $m$-th dilation of $P$ can be written as a sum of $m$ lattice points in $P$.
We can deduce from Lemma~\ref{lem-mink2} and Lemma~\ref{lem-mink} easily for all $n,m \geq 1$:
\[
(m+n)\mathcal{O}_{\underline{\ell}} \cap \mathbb{Z}_{\geq 0}^N =  \left(m\mathcal{O}_{\underline{\ell}} \cap \mathbb{Z}_{\geq 0}^N \right) + \left( n\mathcal{O}_{\underline{\ell}} \cap \mathbb{Z}_{\geq 0}^N \right)
\]
and
\[
(m+n)\mathcal{C}_{\underline{\ell}} \cap \mathbb{Z}_{\geq 0}^N =  \left(m\mathcal{C}_{\underline{\ell}} \cap \mathbb{Z}_{\geq 0}^N\right) + \left( n\mathcal{C}_{\underline{\ell}}  \cap \mathbb{Z}_{\geq 0}^N\right).
\]
\begin{corollary}\label{cor-nor}
For fixed $1 \leq i \leq n$ and $\underline{\ell}\,:$ The polytopes $\mathcal{O}_{\underline{\ell}}$ and $\mathcal{C}_{\underline{\ell}}$ are both normal.
\end{corollary}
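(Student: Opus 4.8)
The plan is to deduce normality of both polytopes directly from the Minkowski-sum decompositions already at our disposal, namely Lemma~\ref{lem-mink} for the chain polytope and Lemma~\ref{lem-mink2} for the order polytope, together with their iterated forms
\[
(m+n)\mathcal{O}_{\underline{\ell}} \cap \mathbb{Z}_{\geq 0}^N = \left(m\mathcal{O}_{\underline{\ell}} \cap \mathbb{Z}_{\geq 0}^N\right) + \left(n\mathcal{O}_{\underline{\ell}} \cap \mathbb{Z}_{\geq 0}^N\right),
\]
and the analogue for $\mathcal{C}_{\underline{\ell}}$, as displayed just above the corollary. Before starting I would also record the harmless bookkeeping point that the ambient lattice $\mathbb{Z}^{\operatorname{dim}P}$ in the definition of normality may here be taken to be $\mathbb{Z}_{\geq 0}^N$: both $\mathcal{O}_{\underline{\ell}}$ and $\mathcal{C}_{\underline{\ell}}$ lie in $(\mathbb{R}_{\geq 0})^N$ by definition, so every lattice point of any dilation has nonnegative coordinates, and both polytopes are full-dimensional in $\mathbb{R}^N$ (the chain polytope contains the origin and all coordinate unit vectors $e_{k,j}$, the order polytope contains the origin and the indicator vectors of all up-sets of $\mathcal{P}_{\underline{\ell}}$, and in each case these span $\mathbb{R}^N$).

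With this in place the proof is a one-line induction on $m$, carried out uniformly for $P\in\{\mathcal{O}_{\underline{\ell}},\mathcal{C}_{\underline{\ell}}\}$. For $m=1$ there is nothing to prove. For $m\geq 2$, let $p\in mP\cap\mathbb{Z}_{\geq 0}^N$. Applying Lemma~\ref{lem-mink2} (resp.\ Lemma~\ref{lem-mink}) with parameter $m-1$ gives a decomposition $p = q + p_m$ with $q\in (m-1)P\cap\mathbb{Z}_{\geq 0}^N$ and $p_m\in P\cap\mathbb{Z}_{\geq 0}^N$. By the inductive hypothesis applied to $q$ we obtain $q = p_1+\cdots+p_{m-1}$ with each $p_j\in P\cap\mathbb{Z}_{\geq 0}^N$, hence $p = p_1+\cdots+p_m$ as required. (Alternatively one can bypass the induction and iterate the $(m+n)$-identity above directly, splitting off one copy of $P\cap\mathbb{Z}_{\geq 0}^N$ at a time.)

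There is no genuine obstacle left at this stage: all of the combinatorial substance has already been absorbed into the Minkowski-sum lemmas, in particular the chain-surgery argument in the proof of Lemma~\ref{lem-mink} (replacing a maximal chain avoiding the set $M$ of minimal elements of $\operatorname{supp}(\bs)$ by one meeting it), which is the genuinely nontrivial input. The remaining work for the corollary is purely formal, and the same argument applies verbatim to the two cases, the only difference being which of Lemma~\ref{lem-mink} and Lemma~\ref{lem-mink2} is invoked in the inductive step.
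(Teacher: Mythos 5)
Your proof is correct and follows exactly the route the paper intends: the normality of both polytopes is an immediate formal consequence of the Minkowski-sum identities in Lemma~\ref{lem-mink} and Lemma~\ref{lem-mink2}, iterated by a short induction on $m$, and the paper simply states the corollary after recording those lemmas without writing out the induction.
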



\section{Some combinatorics on roots}\label{Section2}
\subsection{Preliminaries}
We consider $\lie {sl}_{n+1}(\bc)$ with the standard triangular decomposition $\lie b \oplus \lie n^- = \lie n^+ \oplus \lie h \oplus \lie n^-$. We denote the set of roots $R$, 
the positive roots $R^+$, the simple roots $\alpha_i$ for $i=1, \ldots, n$. Then any positive root is of the form
\[\alpha_{i,j} = \alpha_i + \ldots + \alpha_{j}.\]
The set of positive roots of $A_n$ can be arranged in a lower triangular matrix where entries of the $i$-th row are $\alpha_{1,i},...,\alpha_{i,i}$ and entries of the $j$-th column are $\alpha_{j,j}$ to $\alpha_{j,n}$.\\
The set of dominant, integral weights is denoted $P^+$, the set of integral weights $P$ and the fundamental weights $\om_i$ for $i = 1, \ldots, n$. In terms of the dual of the  canonical basis of the diagonal matrices in $M_{n+1}(\bc)$, we can write $\om_i = \epsilon_1 + \ldots + \epsilon_i$ and $\alpha_i = \epsilon_i - \epsilon_{i+1}$.\\
For all $\alpha \in R^+$, we fix a $\lie{sl}_2$-triple $\{ e_\alpha, f_\alpha, h_\alpha:= [e_\alpha, f_\alpha] \}$.


\subsection{Weyl group combinatorics}
Let us denote $W$ the Weyl group, generated by the reflection at simple roots. We can identify $W$ with the symmetric group $S_{n+1}$ via the action on $\epsilon_i$ and hence we write any $w \in W$ as a permutation
\[
w = \left(\begin{array}{cccccc} 1 & 2 & 3 & \ldots & n &n+1\\ w(1) & w(2) & w(3) & \ldots & w(n) & w(n+1) \end{array} \right),
\]
Let us fix $i \in \{1, \ldots, n\}$, then the stabilizer $\operatorname{Stab}_W(\omega_i)$ is isomorphic to $S_{i} \times S_{n+1-i}$, and we denote 
\[W^i = W/\operatorname{Stab}_W(\omega_i).\] 
Each coset in $W/\operatorname{Stab}_W(\omega_i)$ has a unique representative of minimal length and we choose later our favorite reduced decomposition for this representative (Proposition~\ref{prop-fav}).  In abuse of notation we identify each coset with its minimal representative, so we write $w \in W^i$, especially $w_0$ is the unique representative of minimal length in $W^i$ of the longest element from $W$. \\
For $w \in W$ let us denote 
\[
R^-_w = w^{-1}(R^-) \cap R^+.
\]
Then it well-known (and can be easily verified) that $|R^-_w|$ is equal to the length $w$ and
\begin{eqnarray}\label{root-des}
R^-_w = \{  \alpha_{k,j} \in R^+ \mid 1  \leq k < j+1 \leq n+1 \text{ and } w(j+1) < w(k) \}.
\end{eqnarray}
We denote the subalgebra spanned by the root vectors of roots in $-R^-_w $
\[
\lie n^-_w := \langle f_\alpha \, |\, \alpha \in R^-_w \rangle.
\]
Let $w \in W^i$, then we denote for $1 \leq k \leq i$:
\[
\ell_k = \max \{i-1\} \cup \{j \mid i \leq j \leq n \text{ and }  \alpha_{k, j} \in w^{-1}(R^-) \cap R^+\}.
\]
\begin{corollary}\label{coro-ell} Let $1 \leq k \leq i$ and suppose $\ell_k \geq i$.  Then $\alpha_{k, p}  \in w^{-1}(R^-) \cap R^+$ for all $i \leq p \leq \ell_k$. Suppose further $k+1 \leq i$, then $\alpha_{k+1, \ell_k} \in w^{-1}(R^-) \cap R^+$.
\end{corollary}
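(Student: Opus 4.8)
The plan is to reduce the statement to the explicit description \eqref{root-des} of $R^-_w$, combined with the standard description of minimal-length coset representatives. Since $w \in W^i$ is chosen of minimal length in its coset modulo $S_i \times S_{n+1-i}$, and the simple reflections contained in that subgroup are precisely $s_1, \dots, s_{i-1}$ and $s_{i+1}, \dots, s_n$, the criterion $\ell(w s_j) > \ell(w) \Leftrightarrow w(j) < w(j+1)$ forces $w$ to be increasing on each of the two blocks:
$$w(1) < w(2) < \cdots < w(i), \qquad w(i+1) < w(i+2) < \cdots < w(n+1).$$
First I would record this observation; everything else is bookkeeping with these two chains of inequalities.

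For the first claim, the hypothesis $\ell_k \ge i$ together with the definition of $\ell_k$ means $\alpha_{k,\ell_k} \in w^{-1}(R^-)\cap R^+$, so by \eqref{root-des} we have $w(\ell_k+1) < w(k)$. Now fix $p$ with $i \le p \le \ell_k$. Then $k \le i \le p$, so $\alpha_{k,p}$ is a genuine positive root, and $i+1 \le p+1 \le \ell_k + 1 \le n+1$ shows that $p+1$ and $\ell_k+1$ both lie in the second block, where $w$ is increasing; hence $w(p+1) \le w(\ell_k+1) < w(k)$, and \eqref{root-des} gives $\alpha_{k,p} \in w^{-1}(R^-)\cap R^+$. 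For the second claim, assume in addition $k+1 \le i$; then $k+1 \le i \le \ell_k$, so $\alpha_{k+1,\ell_k}$ is a positive root, and since $k < k+1 \le i$ both lie in the first block, where $w$ is increasing, we get $w(k) < w(k+1)$, so that $w(\ell_k+1) < w(k) < w(k+1)$, and once more \eqref{root-des} yields $\alpha_{k+1,\ell_k} \in w^{-1}(R^-)\cap R^+$.

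I do not expect a genuine obstacle: the content is entirely the monotonicity of $w$ within each block, applied to the single inequality extracted from $\alpha_{k,\ell_k} \in R^-_w$. The only points needing a little care are the index bookkeeping — verifying that $p+1$ and $\ell_k+1$ really land in $\{i+1,\dots,n+1\}$ (this is exactly where $\ell_k \ge i$ enters) and that the roots $\alpha_{k,p}$ and $\alpha_{k+1,\ell_k}$ appearing in the conclusion are honest positive roots of $\lie{sl}_{n+1}$, i.e. that their first index does not exceed their second.
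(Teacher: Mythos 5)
Your proof is correct and follows essentially the same route as the paper: both extract $w(\ell_k+1)<w(k)$ from $\alpha_{k,\ell_k}\in R^-_w$ via \eqref{root-des}, then propagate this inequality using the monotonicity of $w$ on the two blocks $\{1,\dots,i\}$ and $\{i+1,\dots,n+1\}$ coming from minimality in the coset. You are merely a little more explicit than the paper in stating the block-monotonicity up front, which is a fine clarification but not a different argument.
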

\begin{proof}
Suppose $i \leq \ell_k$, then $\alpha_{k, \ell_k}  \in w^{-1}(R^-) \cap R^+$, which implies by \eqref{root-des}: $w(\ell_k+1) < w(k)$. Let $k \leq i \leq p < \ell_k$, then we have $w(p+1) < w(\ell_k +1)$, since $w$ is the minimal element modulo $\operatorname{Stab}_W(\omega_i)$. This implies $w(p+1) < w(k)$ and so $\alpha_{k,p} \in R^-_w$.\\
Suppose $1 \leq k <i \leq \ell_k$, then again $w(\ell_k+1) < w(k)$. Further $w(k) < w(k+1)$, since $k+1 \leq i$ and $w$ is the minimal element modulo $\operatorname{Stab}_W(\omega_i)$. This implies $w(\ell_k) < w(k+1)$ and so $\alpha_{k+1, \ell_k} \in w^{-1}(R^-) \cap R^+$.
\end{proof}

This implies that $\ell_k \leq \ell_{k+1}$ for all $k$ and so we obtain a sequence for each $w \in W^i$:
\begin{eqnarray}\label{seq-def}
\underline{\ell}_w := (i-1 \leq \ell_1 \leq \ldots \leq \ell_{i} \leq n).
\end{eqnarray}

\begin{proposition}\label{prop-fav} Let $w \in W^i$, $\underline{\ell}_w$ the corresponding sequence. Then the following is a reduced decomposition of $w$:
\[
w = (s_{\ell_{1} - (i-1)} \cdots s_{1})(s_{\ell_{2} -(i-2)} \cdots s_{2}) \cdots (s_{\ell_{i-1}-1} \cdots s_{i-1}) (s_{\ell_{i}} \cdots s_i)
\]
\end{proposition}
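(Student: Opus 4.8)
The plan is to pass to one-line notation for permutations and split the claim into two independent parts. Denote by $P$ the element on the right-hand side of the asserted identity, viewed as a word in the simple reflections applied right to left; we must show $P=w$ as permutations and that $\operatorname{length}(P)$ equals the number of reflections occurring in $P$, since together these say exactly that the word is a reduced decomposition of $w$. Throughout, identify $w\in W^i$ with its minimal-length coset representative, a Grassmannian permutation; writing $a_k:=w(k)$ for $1\le k\le i$ we have $a_1<a_2<\cdots<a_i$, while $w(i+1)<w(i+2)<\cdots<w(n+1)$ lists the complement $\{1,\dots,n+1\}\setminus\{a_1,\dots,a_i\}$ in increasing order. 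First I would rewrite the $k$-th block of $P$ in terms of $a_k$: since $\alpha_{k,j}\in R^-_w$ holds exactly when $w(j+1)<w(k)=a_k$, and since exactly $a_k-k$ of the values $w(i+1),\dots,w(n+1)$ are smaller than $a_k$ (the interval $\{1,\dots,a_k-1\}$ meets $\{a_1,\dots,a_i\}$ in precisely $a_1,\dots,a_{k-1}$), one obtains $\ell_k=i-1+a_k-k$, hence $\ell_k-(i-k)=a_k-1$, so the $k$-th block is
\[
B_k:=s_{a_k-1}s_{a_k-2}\cdots s_{k+1}s_k
\]
(the empty word when $a_k=k$); thus $P=B_1B_2\cdots B_i$ and $P$ consists of $\sum_{k=1}^i(a_k-k)$ reflections.

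Next I would evaluate $P$ as a permutation. A short computation identifies $B_k$ with the cycle $(k,\,a_k,\,a_k-1,\,\dots,\,k+1)$; in particular $B_k$ sends $k\mapsto a_k$, permutes only the interval $\{k,k+1,\dots,a_k\}$, and fixes everything outside it. To compute $P(m)=B_1(B_2(\cdots B_i(m)\cdots))$ for $1\le m\le i$, note that each $B_j$ with $j>m$ fixes $m$ (as $m\notin\{j,\dots,a_j\}$), so after applying $B_i,\dots,B_{m+1}$ the argument is still $m$ and then $B_m$ sends it to $a_m$; and each $B_j$ with $j<m$ fixes $a_m$ (as $a_m>a_j$, using the strict monotonicity $a_1<\cdots<a_i$, equivalently $\ell_1\le\cdots\le\ell_i$ from Corollary~\ref{coro-ell}), so $B_{m-1},\dots,B_1$ leave $a_m$ unchanged. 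Hence $P(m)=a_m=w(m)$ for $1\le m\le i$, and in particular $P$ maps $\{1,\dots,i\}$ onto $\{a_1,\dots,a_i\}$, so it maps $\{i+1,\dots,n+1\}$ onto the complement.

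Finally I would run a length count. On one side, $P$ is a product of $\sum_{k=1}^i(a_k-k)$ simple reflections, so $\operatorname{length}(P)\le\sum_{k=1}^i(a_k-k)$. On the other side, the inversions $(p,q)$ of $P$ with $p\le i<q$ already number $\sum_{p=1}^i\#\{q>i:P(q)<a_p\}=\sum_{p=1}^i(a_p-p)$, since $P$ maps $\{i+1,\dots,n+1\}$ bijectively onto the complement of $\{a_1,\dots,a_i\}$, of which exactly $a_p-p$ elements lie below $a_p$. Therefore $\operatorname{length}(P)=\sum_{k=1}^i(a_k-k)$, which both shows that the word $P$ is reduced and forces $P$ to have no inversions apart from those counted above; in particular $P$ is increasing on $\{i+1,\dots,n+1\}$, and combined with $P(m)=a_m$ for $m\le i$ this identifies $P$ with the unique minimal-length coset representative having these values, namely $w$. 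I expect the only genuinely delicate point to be the cycle description of $B_k$ and the bookkeeping that the blocks $B_j$ do not interfere when evaluating $P(m)$; that telescoping collapse is precisely where the monotonicity of the sequence $\underline{\ell}_w$ is used.
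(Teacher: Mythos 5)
Your proof is correct and takes a genuinely different route from the paper's. The paper works entirely with roots: for each $\alpha_{p,q}\in R^-_w$ it applies the tail blocks $(s_{\ell_p-(i-p)}\cdots s_p)\cdots(s_{\ell_i}\cdots s_i)$ and lands on a negative root whose rightmost support index $\ell_p-(i-p)$ exceeds every index occurring in the remaining blocks, so that $w'(\alpha_{p,q})\in R^-$; the resulting containment $R^-_w\subseteq R^-_{w'}$, combined with the trivial bound of $\operatorname{length}(w')$ by the number $N$ of letters, forces equality of inversion sets and hence reducedness. You bypass the root computation by passing to one-line notation: from $\ell_k=i-1+a_k-k$ you identify each block with the cycle $(k,a_k,a_k-1,\dots,k+1)$, show the blocks telescope to give $P(m)=a_m=w(m)$ for $m\le i$, and close with a count of inversions across position $i$ that pins the length at $\sum_k(a_k-k)=N$. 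Both arguments share the same pigeonhole skeleton, a containment (or inequality) plus a cardinality bound, but where the paper verifies the key step root by root, you verify it value by value on the permutation. The paper's version stays inside the Lie-theoretic framework and would generalize more readily to other types; yours is more elementary, makes the bijection between $W^i$ and the sequences $\underline{\ell}$ transparent via $a_k\leftrightarrow\ell_k$, and delivers the two conclusions $P=w$ and reducedness simultaneously from the single sharp inversion count rather than in two steps.
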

\begin{proof}
Let us denote by $w'$ the right hand side of the equation, then we have to show
\[
w^{-1}(R^+) \cap R^- = w'^{-1}(R^+) \cap R^-.
\]
Since the lengths of both elements are the same, we just have to show that the set of the left hand side is contained in the set on the right hand side. So let $\alpha_{p,q} \in w^{-1}(R^+) \cap R^-$, then by Corollary~\ref{coro-ell}: $i \leq q \leq \ell_p$. Then
\[
(s_{\ell_{p} - (i - p)} \cdots s_{p})\cdots (s_{\ell_{p+1} - (i-p-1)} \cdots s_{p+1}) (s_{\ell_{i}} \cdots s_i)(\alpha_{p,q}) = -(\alpha_{p+q-i}+\alpha_{p+q-i+1}+...+\alpha_{\ell_p-(i-p)}).
\]
$s_{\ell_p - (i-p)}$ does not appear in the remaining part of $w'$, which implies $w' (\alpha_{p,q}) \in R^-$. This implies that $w'$ is a reduced decomposition of $w$, since the number of elements in $w^-(R^+) \cap R^-$ is equal to the number of reflection in the decomposition.
\end{proof}
We see immediately that this gives a one-to-one correspondence between $W^i$ and the set of ascending sequences of $i$ integers in the interval bounded by $i-1$ and $n$.

\subsection{Order and roots}
We have the standard partial order on the set of positive roots, $R^+$, namely $\alpha \geq \beta :\Leftrightarrow \alpha - \beta \in R^+$. By restriction we obtain a partial order on  $R_w^- = w^{-1}(R^-) \cap R^+$. Let $w \in W^i$, then we associate via \eqref{seq-def} a sequence $\underline{\ell}_w$ to $w$. The following proposition is due to the definition of $\mathcal{P}_{\underline{\ell}_w}$ and Corollary~\ref{coro-ell}.
\begin{proposition}\label{prop-pos}
The poset $R^-_w$ is via the map $\alpha_{p,q} \mapsto x_{p,q}$ isomorphic to the poset $\mathcal{P}_{\underline{\ell}_w}$.
\end{proposition}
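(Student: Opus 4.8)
The plan is to verify that the map $\phi : R^-_w \to \mathcal{P}_{\underline{\ell}_w}$ given by $\alpha_{p,q} \mapsto x_{p,q}$ is a well-defined bijection that is an isomorphism of posets. First I would check that $\phi$ is well-defined on vertices: if $\alpha_{p,q} \in R^-_w = w^{-1}(R^-) \cap R^+$, then $1 \le p \le q \le n$, and I must show $1 \le p \le i$ and $i \le q \le \ell_p$, so that $x_{p,q}$ is indeed a vertex of $\mathcal{P}_{\underline{\ell}_w}$. The bound $q \le \ell_p$ is immediate from the definition of $\ell_p$ in \eqref{seq-def}, provided we also know $i \le q$. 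For the remaining constraints ($p \le i$ and $q \ge i$) I would argue from \eqref{root-des}: $\alpha_{p,q} \in R^-_w$ means $w(q+1) < w(p)$; since $w \in W^i$ is the minimal-length representative in $W/(S_i \times S_{n+1-i})$, its values $w(1) < \cdots < w(i)$ and $w(i+1) < \cdots < w(n+1)$ are each increasing. If $q+1 \le i$ then $w(q+1) > w(p)$ for $p \le q$, a contradiction, so $q \ge i$; similarly if $p \ge i+1$ then $w(p) < w(q+1)$ for $q+1 > p$, again a contradiction, so $p \le i$. Hence $\phi$ sends vertices of $R^-_w$ to vertices of $\mathcal{P}_{\underline{\ell}_w}$.

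Next I would check surjectivity and injectivity. Injectivity is clear since $(p,q) \mapsto \alpha_{p,q}$ and $(p,q) \mapsto x_{p,q}$ are both injective parametrizations. For surjectivity, let $x_{p,q}$ be a vertex of $\mathcal{P}_{\underline{\ell}_w}$, so $1 \le p \le i$ and $i \le q \le \ell_p$; by Corollary~\ref{coro-ell} (with $k = p$) we get $\alpha_{p,q} \in w^{-1}(R^-) \cap R^+ = R^-_w$, and $\phi(\alpha_{p,q}) = x_{p,q}$. Alternatively, one simply observes that both posets have cardinality $N = \sum_{k=1}^i (\ell_k - i + 1)$ — for $R^-_w$ this follows from the cardinality description implicit in Corollary~\ref{coro-ell}, and for $\mathcal{P}_{\underline{\ell}_w}$ it is the stated count — so injectivity already forces bijectivity.

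Finally, the order-preservation: in $R^+$ the standard order is $\alpha_{p_1,q_1} \ge \alpha_{p_2,q_2}$ iff $\alpha_{p_1,q_1} - \alpha_{p_2,q_2} \in R^+$, which for roots of $A_n$ written as intervals of simple roots is exactly the condition $p_1 \le p_2$ and $q_1 \ge q_2$ (the interval $[p_1,q_1]$ contains $[p_2,q_2]$, and the set-theoretic difference of intervals is again an interval of simple roots, hence a positive root). This is precisely the defining relation $x_{p_1,q_1} \ge x_{p_2,q_2} :\Leftrightarrow p_1 \le p_2$ and $q_1 \ge q_2$ of $\mathcal{P}_{\underline{\ell}}$. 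Since the order on $R^-_w$ is the restriction of the order on $R^+$, and both sides use the same numerical criterion on indices, $\phi$ and $\phi^{-1}$ are order-preserving; thus $\phi$ is a poset isomorphism.

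I do not anticipate a serious obstacle here — the statement is essentially bookkeeping, and the paper has already done the substantive work in Corollary~\ref{coro-ell} (which guarantees the index set of $R^-_w$ really does fill out the ``staircase'' shape $\{(p,q) : 1 \le p \le i,\ i \le q \le \ell_p\}$ with the $\ell_k$ weakly increasing). The one point that needs a moment's care is confirming that the containment-of-intervals description of the root order is literally the same relation as the one defining $\mathcal{P}_{\underline{\ell}}$; once that identification is made, the isomorphism is immediate. I would present the proof as the three short verifications above — well-definedness on vertices, bijectivity (via the cardinality count $N$), and order-preservation — with Corollary~\ref{coro-ell} cited for the first two.
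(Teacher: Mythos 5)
Your proof follows the route the paper only gestures at (well-definedness via \eqref{root-des} and minimality of $w$ in $W^i$, surjectivity via Corollary~\ref{coro-ell}, then an order comparison), and the well-definedness and bijectivity parts are correct and fill in exactly what the paper leaves implicit.

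The order-preservation step, however, rests on a false equivalence. You assert that $\alpha_{p_1,q_1} - \alpha_{p_2,q_2} \in R^+$ holds iff $p_1 \le p_2$ and $q_1 \ge q_2$, on the grounds that ``the set-theoretic difference of intervals is again an interval of simple roots, hence a positive root.'' This is not so: take $\alpha_{1,3}$ and $\alpha_{2,2}$; then $[2,2] \subset [1,3]$ but $\alpha_{1,3} - \alpha_{2,2} = \alpha_1 + \alpha_3$ is not a root, because $[1,3] \setminus [2,2] = \{1,3\}$ is the union of two disjoint intervals. In fact, nested intervals yield a root difference only when they share an endpoint. The relation ``$\alpha - \beta \in R^+$'' is not even transitive on $R^+$, so it is not literally a partial order; the paper's wording is loose here and you have amplified the looseness into an incorrect claim. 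What is true, and what the proposition actually requires, is one of the following two reformulations, either of which repairs the argument with essentially no extra work: (i) the standard root order is $\alpha \ge \beta$ iff $\alpha - \beta$ is a \emph{non-negative integer combination of simple roots} (equivalently, a sum of positive roots), and in type $A_n$ this is precisely interval containment $p_1 \le p_2$, $q_1 \ge q_2$; or (ii) the cover relations of $\mathcal{P}_{\underline{\ell}_w}$ correspond under $\alpha_{p,q} \mapsto x_{p,q}$ to the differences $\alpha_{p,q} - \alpha_{p+1,q} = \alpha_p$ and $\alpha_{p,q} - \alpha_{p,q-1} = \alpha_q$, which are simple roots, and both posets are the transitive closures of their cover relations, so the map is an isomorphism. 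Replace the offending sentence with either (i) or (ii) and the proof is sound; as written, the justification you give for order preservation is wrong even though the conclusion is right.
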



\section{PBW-graded modules and their bases}\label{Section3}
For $\lambda \in P^+$ we denote the simple finite-dimensional highest weight module of highest weight $\lambda$ by $V(\lambda)$, and a non-zero highest weight vector by $v_\lambda$. Then $V(\lambda) = U(\lie n^-).v_\lambda$ and further $V(\lambda)$ decomposes into $\lie h$-weight space $V(\lambda)_\mu$. For any $w \in W$, the weight space of weight $w(\lambda)$ is one-dimensional and we denote a generator of this line $v_{w(\lambda)}$.


\subsection{Demazure modules} 
\begin{definition}
For $\lambda \in P^+, w \in W$, the Demazure module is defined as
\[
V_w(\lambda) := U(\lie b).v_{w(\lambda)} \subset V(\lambda).
\]
\end{definition}
Note that this is not a $\lie g$-module but a $\lie b$-submodule. For $w = \text{ id }$, this module is nothing but $\bc v_\lambda$ while for $w = w_0$ we have $V_{w_0}(\lambda) = V(\lambda)$. Further, if $w_1, w_2$ are representatives of the same coset in $W^i$, then 
\[
V_{w_1}(m \omega_i) = V_{w_2}(m \omega_i).
\]

\noindent
The Weyl group acts on $U(\lie g)$ as well as on $V(\lambda)$, so we can consider
\[
w^{-1} (V_w(\lambda)) w \subset V(\lambda).
\]
This is equal to 
\[
w^{-1}(U(\lie b).v_{w(\lambda)})w = U(w^{-1} \lie b w).v_\lambda.
\]
Now $w^{-1} \lie b w \subset \lie n^-_w \oplus \lie n^+$. Since $v_\lambda$ is a highest weight vector we have $\lie n^+.v_\lambda = 0$. This implies that
\[
V_w(\lambda) = w(U(\lie n^-_w).v_\lambda) w^{-1}.
\]


\subsection{PBW filtration}
We recall here the PBW filtration. Let $\lie u$ be a finite-dimensional Lie algebra, then we define a filtration on $U(\lie u)$:
\[
U(\lie u)_s := \langle x_{i_1} \cdots x_{i_{\ell}} \, | \, x_{i_j} \in \lie u \, , \, \ell \leq s \rangle.
\]
This induces a filtration on any cyclic $\lie u$-module $M = U(\lie u).m$:
\[
M_s := U(\lie u)_s.m.
\]
We denote the associated graded module $M^a$ (the PBW graded or degenerated module), which is a module for the abelianized version of $\lie u$ 
(the Lie algebra with the same vector space as $\lie u$ but with a trivial Lie bracket), denoted $\lie u^a$ and $U(\lie u^a) = S(\lie u)$, 
the associated graded algebra of $U(\lie u)$.\\
In our case, we consider the algebra $\lie n^-_w \subset \lie n^-$, resp. $\lie b$ and the cyclic modules $U(\lie n^-_w).v_\lambda$, resp. 
$V_w(\lambda)$. We denote the associated graded modules 
\begin{eqnarray}\label{def-modules}
(U(\lie n^-_w).v_\lambda)^a \; \; , \; \;  V_w(\lambda)^a.
\end{eqnarray}
We restrict again ourselves to $\lambda = m \omega_i$. Since $(U(\lie n^-_w).v_\lambda)^a$ is a cyclic $S(\lie n^-_w)$-module, 
there exists an ideal $I_{m,w} \subset S(\lie n^-_w)$ such that
\[
(U(\lie n^-_w).v_\lambda)^a \cong S(\lie n^-_w)/I_{m,w}.
\]
By classical theory we have $f_{\alpha}^{m\omega_i(h_\alpha) + 1} \in I_{m,w}$. 
Let us consider the special case $w_0$ here, then $\lie n^-_w = \lie n^-$ and $V_{w_0}(m \omega_i) = V(m \omega_i)$. 
It has been shown in  \cite{FFoL11a} that in this case the filtered components $V(m \omega_i)_s$ are $U(\lie b)$-modules and especially $U(\lie n^+)$-modules. 
This implies of course that $V(m \omega_i)^a$ is a $U(\lie n^+)$-module. In fact $\lie n^+$ acts on $S(\lie n^-)$ by differential operators $\delta_{\alpha}$ 
(we may omit here scalars):
\[
\delta_\alpha(f_\beta) := e_\alpha.f_\beta = \begin{cases} f_{\beta - \alpha} & \text{ if } \beta - \alpha \in R^+ \\ 0 & \text{ if } \beta-  \alpha \notin R^+ \end{cases}
\]
Then is has been shown in \cite{FFoL11a}:
\begin{theorem*}
$
I_{m,w_0} = S(\lie n^-) \{ U(\lie n^+).f_\alpha^{m \omega_i(h_\alpha) + 1} \, | \, \alpha \in R^+ \}.
$
\end{theorem*}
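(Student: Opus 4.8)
\emph{Proof idea.} Write $J := S(\lie n^-)\bigl\{\,U(\lie n^+).f_\alpha^{m\om_i(h_\alpha)+1}\mid\alpha\in R^+\,\bigr\}$ for the right-hand side (here $w_0$ is the longest element of $W$, so $\lie n^-_{w_0}=\lie n^-$ and $V_{w_0}(m\om_i)=V(m\om_i)$); the plan is to prove $J=I_{m,w_0}$ by the two inclusions, the second one being the hard one. The inclusion $J\subseteq I_{m,w_0}$ is the easy half: for each $\alpha\in R^+$ the $\lie{sl}_2$-triple $\{e_\alpha,f_\alpha,h_\alpha\}$ forces $f_\alpha^{m\om_i(h_\alpha)+1}.v_{m\om_i}=0$ in $V(m\om_i)$, and since this element lies in $M_{m\om_i(h_\alpha)+1}$ its class in $V(m\om_i)^a$ vanishes, i.e. $f_\alpha^{m\om_i(h_\alpha)+1}\in I_{m,w_0}$. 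I would then invoke the result of \cite{FFoL11a} recalled above — that each $V(m\om_i)_s$ is $\lie b$-stable — to conclude that $V(m\om_i)^a$ is a $U(\lie n^+)$-module and hence $I_{m,w_0}$ is stable under the operators $\delta_\alpha$; so $U(\lie n^+).f_\alpha^{m\om_i(h_\alpha)+1}\subseteq I_{m,w_0}$, and as $I_{m,w_0}$ is an ideal, $J\subseteq I_{m,w_0}$.

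\textbf{Reduction of the reverse inclusion.} Because the canonical map $S(\lie n^-)/J\twoheadrightarrow S(\lie n^-)/I_{m,w_0}=V(m\om_i)^a$ is surjective, it is enough to prove $\dim S(\lie n^-)/J\le\dim V(m\om_i)^a=\dim V(m\om_i)$. First I would observe $m\om_i(h_{\alpha_{k,j}})=m$ if $k\le i\le j$ and $=0$ otherwise, so that $f_\alpha\in J$ for every $\alpha$ outside the set $\{\alpha_{k,j}\mid k\le i\le j\}$; thus $S(\lie n^-)/J$ is a quotient of the polynomial ring in the $N=i(n+1-i)$ variables $f_{\alpha_{k,j}}$ indexed by the full $i\times(n+1-i)$ rectangle $\mathcal P_{\underline\ell_{w_0}}$ (where $\underline\ell_{w_0}=(n,\dots,n)$, via Proposition~\ref{prop-pos}). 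I would then aim to show that the classes of the monomials $f^{\bs}$ with $\bs\in m\mathcal C_{\underline\ell_{w_0}}\cap\Z_{\ge0}^N$ span this quotient.

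\textbf{The straightening law — the main obstacle.} Fix a homogeneous monomial order on $S(\lie n^-)$ refining total degree. The crux is a straightening lemma: if $\bs$ is supported on the rectangle but $\bs\notin m\mathcal C_{\underline\ell_{w_0}}$, so that some chain $x_{k_1,j_1}>\dots>x_{k_s,j_s}$ in $\mathcal P_{\underline\ell_{w_0}}$ satisfies $\sum_p s_{k_p,j_p}\ge m+1$, then $f^{\bs}$ is congruent modulo $J$ to a $\C$-linear combination of strictly smaller monomials. I would produce such a relation by applying a suitable composition $\delta_{\beta_r}\cdots\delta_{\beta_1}$ to the generator $f_{\alpha_{k_1,j_1}}^{m+1}\in J$ (note that $k_1\le i\le j_1$, so $m\om_i(h_{\alpha_{k_1,j_1}})=m$), with the $\beta$'s chosen — following the Dyck-path combinatorics of the rectangular poset — so that the leading term of the resulting element of $S(\lie n^-)$ is a scalar multiple of $\prod_p f_{\alpha_{k_p,j_p}}^{c_p}$ with $\sum_p c_p=m+1$, a divisor of $f^{\bs}$; multiplying through by the complementary monomial and recursing on the monomial order (a well-order on monomials of fixed degree) terminates. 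Carrying out this leading-term identification and the termination check — essentially the type-$A$ FFLV straightening specialized to a rectangle — is the technical heart, and the step I expect to be hardest. Granting it, one obtains $\dim S(\lie n^-)/J\le|m\mathcal C_{\underline\ell_{w_0}}\cap\Z_{\ge0}^N|$.

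\textbf{Matching the count.} Finally one checks $|m\mathcal C_{\underline\ell_{w_0}}\cap\Z_{\ge0}^N|\le\dim V(m\om_i)$. By Stanley's theorem the chain and order polytopes of $\mathcal P_{\underline\ell_{w_0}}$ have the same Ehrhart polynomial, so this count equals $|m\mathcal O_{\underline\ell_{w_0}}\cap\Z_{\ge0}^N|$, the number of order-reversing maps $\mathcal P_{\underline\ell_{w_0}}\to\{0,\dots,m\}$, which biject with semistandard tableaux of rectangular shape $(m^i)$ with entries in $\{1,\dots,n+1\}$; this is exactly $\dim V(m\om_i)$. (Alternatively I could avoid the combinatorial identity and instead verify directly, by a weight and triangularity argument, that the vectors $f^{\bs}.v_{m\om_i}$ for $\bs\in m\mathcal C_{\underline\ell_{w_0}}\cap\Z^N$ are linearly independent in $V(m\om_i)^a$.) Either way all the inequalities collapse to equalities, the surjection $S(\lie n^-)/J\to V(m\om_i)^a$ is an isomorphism, and therefore $J=I_{m,w_0}$.
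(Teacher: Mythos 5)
The paper does not prove this theorem — it is quoted from \cite{FFoL11a} and used as an input, so there is no proof here to compare against. That said, your outline correctly reconstructs the FFLV argument, specialized to $\lambda=m\omega_i$: the easy inclusion via $\lie{sl}_2$-theory and $\lie b$-stability of the PBW filtration; spanning of $S(\lie n^-)/J$ by a Dyck-path/chain straightening law (which in this rectangular case is exactly what the paper re-derives as Lemma~\ref{lem-straight}, starting from $f_\beta^{m+1}$ for $\beta$ the \emph{largest} root of the offending chain and applying the differential operators $\delta_\gamma$); and a dimension count matching lattice points of $\mathcal{C}_{\underline{\ell}_{w_0}}$ against $\dim V(m\omega_i)$ through Stanley's Ehrhart identity and semistandard tableaux — with your parenthetical ``direct linear independence'' alternative being precisely the route the paper itself takes in the first proof of Proposition~\ref{prop-lin}. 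So this is essentially the same approach the paper adapts throughout Section~\ref{Section3}, with the genuinely hard step (leading-term identification and termination in the straightening recursion) correctly flagged as the technical heart.
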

But this implies that for all $w \in W^i$:
\begin{eqnarray}\label{sub-one}
S(\lie n^-_w) \cap \{ U(\lie n^+).f_\alpha^{m \omega_i(h_\alpha) + 1} \, | \, \alpha \in w^{-1}(R^-) \cap R^+ \} \subset I_{m,w}.
\end{eqnarray}


\subsection{PBW-graded bases}
Let $w \in W$, $\lie n^-_w \subset \lie n^{-}$ the Lie subalgebra associated to $R^-_w$. Let $\bs = (s_\alpha) \in \mathbb{Z}_{\geq 0}^N$. Then we associate to $\bs$ the monomial
\[
f^{\bs} := \prod_{\alpha \in R^-_w} f^{s_\alpha} \in S(\lie n^-_w).
\]
One of the main results of this paper is
\begin{theorem}\label{main-thm} Let $\lambda = m \omega_i \in P^+$, $w \in W^i$, then 
\begin{enumerate}
\item $\{ f^{\mathbf{s}}.v_\lambda \, | \, \mathbf{s} \in  m\mathcal{C}_{\underline{\ell}_w} \cap \mathbb{Z}^{N} \}$ is a basis of $(U(\lie n^-_w).v_\lambda)^a$,
\item $\{ \prod_{\alpha _\in R^-_w} e_{w(\alpha)}^{s_\alpha}.v_m \, | \, \mathbf{s} \in  m\mathcal{C}_{\underline{\ell}_w} \cap \mathbb{Z}^{N}\}$ is a basis for $V_w(\lambda)^a$,
\item$I_{w,m}$ is generated as a $S(\lie n^-_w)$-ideal by $S(\lie n^-_w) \cap \{ U(\lie n^+).f_\alpha^{m \omega_i(h_\alpha) + 1} \, | \, \alpha \in w^{-1}(R^-) \cap R^+ \}.$
\end{enumerate}
\end{theorem}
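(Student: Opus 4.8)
The plan is to prove all three statements of Theorem~\ref{main-thm} simultaneously by an induction-free dimension-counting argument combined with the known case $w = w_0$. The strategy splits into an \emph{upper bound} (spanning) and a \emph{lower bound} (linear independence / dimension), and then bootstraps from part (1) to parts (2) and (3).

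\textbf{Step 1: Reduction to $U(\lie n^-_w).v_\lambda$.} Since $V_w(\lambda) = w\bigl(U(\lie n^-_w).v_\lambda\bigr)w^{-1}$ as established in the excerpt, the PBW filtration on $V_w(\lambda)$ is carried by conjugation by $w$ to the PBW filtration on $U(\lie n^-_w).v_\lambda$, and a monomial $f^{\bs}$ goes to $\prod_{\alpha\in R^-_w} e_{w(\alpha)}^{s_\alpha}$. Hence part (2) is an immediate restatement of part (1), and I would state this equivalence first. So the real content is parts (1) and (3).

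\textbf{Step 2: The spanning statement (upper bound).} By \eqref{sub-one}, the ideal $I_{m,w}$ contains $J := S(\lie n^-_w) \cap \{U(\lie n^+).f_\alpha^{m\omega_i(h_\alpha)+1} \mid \alpha \in R^-_w\}$. So it suffices to show that $\{f^{\bs}.v_\lambda \mid \bs \in m\mathcal{C}_{\underline{\ell}_w}\cap\Z^N\}$ spans $S(\lie n^-_w)/J$. The plan here is to run a straightening-law argument: given any monomial $f^{\bs}$ with $\bs \notin m\mathcal{C}_{\underline{\ell}_w}$, there is a chain $x_{k_1,j_1} > \cdots > x_{k_s,j_s}$ in $\mathcal{P}_{\underline{\ell}_w}$ (equivalently $\alpha_{k_1,j_1} > \cdots > \alpha_{k_s,j_s}$ in $R^-_w$) with $\sum s_{\alpha_{k_p,j_p}} \geq m+1$. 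One then applies a suitable element of $U(\lie n^+)$ to an appropriate power $f_{\beta}^{m+1}$ (with $\beta$ the top of the chain, $\beta = \alpha_{k_1,j_s}$ or similar) to produce a relation in $J$ expressing $f^{\bs}$ modulo $J$ as a combination of monomials that are ``smaller'' in a fixed monomial order adapted to the poset. This is exactly the mechanism of \cite{FFoL11a} for the $w_0$ case; since $R^-_w \subset R^+$ is an order ideal-type subset and the differential operators $\delta_\alpha$ restrict compatibly, the same relations survive in $S(\lie n^-_w)$. Termination of the straightening follows because the monomial order is a well-order on monomials of bounded degree.

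\textbf{Step 3: The dimension statement (lower bound) and part (3).} For linear independence of $\{f^{\bs}.v_\lambda \mid \bs \in m\mathcal{C}_{\underline{\ell}_w}\cap\Z^N\}$ in $(U(\lie n^-_w).v_\lambda)^a$, the cleanest route is to compare with $w_0$: the inclusion $R^-_w \subset R^-_{w_0} = R^+$ induces a surjection $S(\lie n^-)/I_{m,w_0} \twoheadrightarrow$ (the $S(\lie n^-_w)$-submodule generated by $v_\lambda$), and the monomial basis for the $w_0$-case from \cite{FFoL11a} — which is indexed by $m\mathcal{C}_{\underline{\ell}_{w_0}}\cap\Z^N$ by part (5) of the first main theorem / \cite{BD14} — restricts to the monomials supported on $R^-_w$, which are exactly those indexed by $m\mathcal{C}_{\underline{\ell}_w}\cap\Z^N$ (a face of $m\mathcal{C}_{\underline{\ell}_{w_0}}$, cf. part (4)). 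Thus these monomials remain linearly independent. Combined with Step~2 this forces equality, gives part (1), and simultaneously shows $I_{m,w} = J$, i.e.\ part (3), since the spanning set we used to bound $S(\lie n^-_w)/J$ from above has exactly the right cardinality $\mathcal{C}_{\underline{\ell}_w}(m)$.

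\textbf{Main obstacle.} The delicate point is the straightening argument of Step~2: one must check that the relations produced by $U(\lie n^+)$ acting on $f_\alpha^{m+1}$ for $\alpha \in R^-_w$ \emph{stay inside} $S(\lie n^-_w)$ after straightening, i.e.\ that no monomial involving a root vector $f_\gamma$ with $\gamma \in R^+ \setminus R^-_w$ is needed — equivalently, that the chain structure used is entirely within the subposet $\mathcal{P}_{\underline{\ell}_w}$. This is where Corollary~\ref{coro-ell} (saturation of $R^-_w$ along rows and the diagonal step down to $\alpha_{k+1,\ell_k}$) is essential: it guarantees that $R^-_w$ is ``order-convex enough'' that the relevant $\lie n^+$-orbits of the $f_\alpha^{m+1}$ do not escape $S(\lie n^-_w)$. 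Making this precise — that the face $m\mathcal{C}_{\underline{\ell}_w}$ of $m\mathcal{C}_{\underline{\ell}_{w_0}}$ is cut out by exactly the relations surviving the intersection with $S(\lie n^-_w)$ — is the crux, and I would expect it to require a careful bookkeeping of which differential operators $\delta_\alpha$ preserve the subalgebra.
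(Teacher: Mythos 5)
Your proposal follows the paper's argument essentially verbatim: the spanning step is the straightening-law mechanism (with Corollary~\ref{coro-ell} guaranteeing, as you correctly flag, that the differential operators do not leave $S(\lie n^-_w)$), and the linear-independence step is the paper's second proof of Proposition~\ref{prop-lin}, identifying $m\mathcal{C}_{\underline{\ell}_w}$ as the face of $m\mathcal{C}_{\underline{\ell}_{w_0}}$ cut out by $s_\alpha=0$ for $\alpha\notin R^-_w$ and inheriting a basis from \cite{BD14}, with (2) and (3) then following by conjugation and dimension count exactly as in the paper. One small slip worth fixing: the map in your Step~3 is the \emph{inclusion} $(U(\lie n^-_w).v_\lambda)^a\hookrightarrow V(\lambda)^a\cong S(\lie n^-)/I_{m,w_0}$, not a surjection out of $S(\lie n^-)/I_{m,w_0}$ --- linear independence is inherited along an inclusion, which is the direction your subsequent argument actually uses.
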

 The following corollaries are easily deduced from this theorem:
\begin{corollary}
Let $m \omega_i \in P^+$, $w \in W$, then, by fixing an order in each factor, 
\[
\{ \prod_{\alpha \in R^-_w} e_{w(\alpha)}^{s_\alpha}.v_m \, | \,  \mathbf{s} \in  \mathcal{C}^m_{\underline{\ell}_w} \cap \mathbb{Z}^{N}\}
\]
is a basis for $V_w(\lambda)$. Further, the character of the Demazure module is given by
\[
\operatorname{char } V_w(m \omega_i) = e^{w(m \omega_i)} \sum_{ \mathbf{s} \in  m\mathcal{C}_{\underline{\ell}_w} \cap \mathbb{Z}^{N}} e^{w(-\operatorname{wt }(\mathbf{s}))},
\]
where $\operatorname{wt }(\mathbf{s}) := \sum_{\alpha \in R^-_w} s_\alpha \alpha$. 
\end{corollary}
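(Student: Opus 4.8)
The plan is to derive both assertions directly from Theorem~\ref{main-thm}, the first as a lift of its part (2) and the second as a weight computation built on its part (1). Since $V_w(m\omega_i)$ depends only on the class of $w$ in $W^i$, we may and do take $w$ to be the minimal representative, so that $R^-_w$, $\underline{\ell}_w$, and the PBW filtration are exactly as in Sections~\ref{Section2} and \ref{Section3}.

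For the first assertion I would invoke the standard comparison of a filtered module with its associated graded. Transporting the PBW filtration of $U(\lie n^-_w).v_{m\omega_i}$ to $V_w(m\omega_i)$ via conjugation by $w$, we obtain an exhausting filtration of the finite-dimensional space $V_w(m\omega_i)$ whose associated graded is $V_w(m\omega_i)^a$, so $\dim V_w(m\omega_i) = \dim V_w(m\omega_i)^a$. For $\bs\in m\mathcal{C}_{\underline{\ell}_w}\cap\Z^N$ the monomial $\prod_{\alpha\in R^-_w}e_{w(\alpha)}^{s_\alpha}$ has PBW-degree $|\bs| := \sum_\alpha s_\alpha$, and its symbol — the class of $\prod e_{w(\alpha)}^{s_\alpha}.v_{m\omega_i}$ in $V_w(m\omega_i)_{|\bs|}/V_w(m\omega_i)_{|\bs|-1}$ — is exactly the corresponding basis element of $V_w(m\omega_i)^a$ supplied by Theorem~\ref{main-thm}(2), and in particular it is independent of the order chosen inside the factor (the associated graded is commutative). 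A non-trivial linear relation among the vectors $\prod e_{w(\alpha)}^{s_\alpha}.v_{m\omega_i}$ in $V_w(m\omega_i)$ would, after extracting its component of maximal filtration degree, produce a non-trivial relation among the corresponding elements of that basis of $V_w(m\omega_i)^a$, which is impossible. Hence the $|m\mathcal{C}_{\underline{\ell}_w}\cap\Z^N|$ vectors in question are linearly independent, and by the dimension count they form a basis of $V_w(m\omega_i)$; reordering the generators inside a fixed factor $e_{w(\alpha)}^{s_\alpha}$ changes a basis vector only by terms of strictly smaller PBW-degree, hence does not affect the conclusion.

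For the character I would use that $\lie h$ normalises $\lie n^-_w$, so every step of the PBW filtration is an $\lie h$-submodule; therefore $\operatorname{char}V_w(m\omega_i) = \operatorname{char}V_w(m\omega_i)^a$, and it suffices to record the weight of each basis vector. Under the identification $w^{-1}\bigl(V_w(m\omega_i)\bigr)w = U(\lie n^-_w).v_{m\omega_i}$ recalled in Section~\ref{Section3}, Theorem~\ref{main-thm}(1) gives the basis $\{f^\bs.v_{m\omega_i} : \bs\in m\mathcal{C}_{\underline{\ell}_w}\cap\Z^N\}$, and $f^\bs.v_{m\omega_i}$ has $\lie h$-weight $m\omega_i - \operatorname{wt}(\bs)$; conjugation by $w$ sends a weight-$\mu$ vector to a weight-$w(\mu)$ vector, so the transported basis vector has weight $w(m\omega_i - \operatorname{wt}(\bs))$ in $V_w(m\omega_i)$. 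Summing over $\bs$ gives
\[
\operatorname{char}V_w(m\omega_i) \;=\; \sum_{\bs\in m\mathcal{C}_{\underline{\ell}_w}\cap\Z^N} e^{\,w(m\omega_i-\operatorname{wt}(\bs))} \;=\; e^{w(m\omega_i)}\sum_{\bs\in m\mathcal{C}_{\underline{\ell}_w}\cap\Z^N} e^{\,w(-\operatorname{wt}(\bs))},
\]
which is the claimed formula.

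The whole argument is essentially formal, so I expect no genuine obstacle; the only slightly delicate point is the lifting step in the second paragraph — that lifts of a homogeneous basis of the associated graded, taken of matching filtration degree, again form a basis, and that leaving the ordering of the (in the graded, commuting) generators unspecified is harmless. This is the standard behaviour of the symbol map for a module over a filtered algebra whose associated graded algebra is a polynomial ring, and causes no difficulty here.
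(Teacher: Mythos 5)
Your argument is correct and is precisely the standard deduction the paper has in mind — the paper simply states the corollary "is easily deduced" from Theorem~\ref{main-thm} without spelling it out, and your two steps (lifting a homogeneous basis of the associated graded through the exhaustive PBW filtration, then reading off weights using that $\lie h$ normalises $\lie n^-_w$ so the character is unchanged by passing to the graded, with the twist by $w$ translating $m\omega_i-\operatorname{wt}(\bs)$ to $w(m\omega_i)+w(-\operatorname{wt}(\bs))$) are exactly what is intended.
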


\begin{corollary}\label{coro-2}
Let $w, \tau \in W^i$, and suppose $\tau \leq w$ in the Bruhat order. 
\begin{enumerate}
\item $m\mathcal{C}_{\underline{\ell}_\tau}$ is the face of $m\mathcal{C}_{\underline{\ell}_w}$ defined by setting $s_\alpha = 0$ for all $\alpha \in \left( w^{-1}(R^-) \setminus \tau^{-1}(R^-) \right) \cap R^+$.
\item $ (U(\lie n_{\tau}).v_m)^a \subset (U(\lie n^-_w).v_m)^a.$
\end{enumerate}
\end{corollary}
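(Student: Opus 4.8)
The plan is to deduce everything from Theorem~\ref{main-thm} together with the standard dictionary between the Bruhat order on $W^i$ and the componentwise order on the sequences $\underline{\ell}$. Concretely, for $\tau,w\in W^i$ one has $\tau\le w$ if and only if $\underline{\ell}_\tau\le\underline{\ell}_w$ componentwise, if and only if $R^-_\tau\subseteq R^-_w$, if and only if (via Proposition~\ref{prop-pos}) $\mathcal{P}_{\underline{\ell}_\tau}$ is an induced subposet --- indeed an order ideal --- of $\mathcal{P}_{\underline{\ell}_w}$; this is classical for Grassmannian Weyl groups and also drops out of Proposition~\ref{prop-fav}, since each block of the reduced word for $\tau$ is a terminal segment of the corresponding block for $w$, exhibiting the word of $\tau$ as a subword of that of $w$. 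Set $S:=\bigl(w^{-1}(R^-)\setminus\tau^{-1}(R^-)\bigr)\cap R^+=R^-_w\setminus R^-_\tau$; under Proposition~\ref{prop-pos} this is precisely the set of vertices of $\mathcal{P}_{\underline{\ell}_w}$ not lying in $\mathcal{P}_{\underline{\ell}_\tau}$.

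For part (1), each inequality $s_\alpha\ge 0$ is valid on $m\mathcal{C}_{\underline{\ell}_w}$ and the hyperplane $\{s_\alpha=0\}$ contains the origin, so it is a supporting hyperplane; hence the common zero locus $F$ of the coordinates $s_\alpha$, $\alpha\in S$, inside $m\mathcal{C}_{\underline{\ell}_w}$ is a face. To identify $F$ with $m\mathcal{C}_{\underline{\ell}_\tau}$, embedded in $\mathbb{R}^{R^-_w}$ via $s_\alpha=0$ for $\alpha\notin R^-_\tau$, I would argue at the level of defining inequalities: for $\bs$ vanishing on $S$, the inequality of $m\mathcal{C}_{\underline{\ell}_w}$ along a chain $C$ loses exactly the terms indexed by $C\cap S$ and becomes the inequality along the chain $C\setminus S=C\cap\mathcal{P}_{\underline{\ell}_\tau}$ of $\mathcal{P}_{\underline{\ell}_\tau}$; conversely every chain of $\mathcal{P}_{\underline{\ell}_\tau}$ is a chain of $\mathcal{P}_{\underline{\ell}_w}$. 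So the inequality system cut out on $\{s_\alpha=0:\alpha\in S\}$ is precisely the chain system of $\mathcal{P}_{\underline{\ell}_\tau}$, i.e. $F=m\mathcal{C}_{\underline{\ell}_\tau}$.

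For part (2), apply Theorem~\ref{main-thm}(1) to $\tau$ and to $w$: the modules $(U(\lie n^-_\tau).v_m)^a$ and $(U(\lie n^-_w).v_m)^a$ carry the monomial bases $\{f^{\bs}.v_m:\bs\in m\mathcal{C}_{\underline{\ell}_\tau}\cap\mathbb{Z}^N\}$ and $\{f^{\bs}.v_m:\bs\in m\mathcal{C}_{\underline{\ell}_w}\cap\mathbb{Z}^N\}$. Because $R^-_\tau\subseteq R^-_w$, the inclusion $S(\lie n^-_\tau)\hookrightarrow S(\lie n^-_w)$ makes sense, and comparing generators via Theorem~\ref{main-thm}(3) (again using $R^-_\tau\subseteq R^-_w$) yields $I_{m,\tau}\subseteq I_{m,w}$, so this inclusion descends to a map $(U(\lie n^-_\tau).v_m)^a\to(U(\lie n^-_w).v_m)^a$; by part (1) it carries the first basis into the second, hence is injective. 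Equivalently, one compares the intrinsic PBW filtration on $U(\lie n^-_\tau).v_m$ with the one induced from $U(\lie n^-_w).v_m$: the monomials $f^{\bs}.v_m$, $\bs\in m\mathcal{C}_{\underline{\ell}_\tau}$, stay linearly independent in the associated graded of the latter, and a dimension count forces the two filtrations to agree step by step.

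The face identification in (1) and the Bruhat combinatorics are routine; I expect the only slightly delicate point to be the well-definedness and injectivity in (2) --- that is, checking that the two PBW-gradings on $U(\lie n^-_\tau).v_m$ coincide --- together with the mild care needed to read ``$m\mathcal{C}_{\underline{\ell}_\tau}$ is a face of $m\mathcal{C}_{\underline{\ell}_w}$'' through the coordinate embedding $\mathbb{R}^{R^-_\tau}\hookrightarrow\mathbb{R}^{R^-_w}$, using only that $\mathcal{P}_{\underline{\ell}_\tau}$ is an induced subposet of $\mathcal{P}_{\underline{\ell}_w}$, which is exactly the content of $R^-_\tau\subseteq R^-_w$.
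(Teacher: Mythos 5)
Your argument is correct and follows essentially the same route as the paper's (very terse) proof: translate Bruhat order $\tau \le w$ in $W^i$ into $R^-_\tau \subseteq R^-_w$, deduce part~(1) from the restriction of chain inequalities, and deduce part~(2) from the generator description of $I_{m,w}$ in Theorem~\ref{main-thm}(3) to get $I_{m,\tau} \subseteq I_{m,w}$, with injectivity then coming from part~(1). (The paper's reference to ``Theorem~\ref{main-thm}~(4)'' is a typo for part~(3).) Your write-up is considerably more careful than the paper's, in particular in spelling out the ladder of equivalences ``$\tau\le w$ iff $\underline{\ell}_\tau\le\underline{\ell}_w$ componentwise iff $R^-_\tau\subseteq R^-_w$'' via the reduced-word blocks of Proposition~\ref{prop-fav}, and in flagging and resolving the point that the PBW filtration on $U(\lie n^-_\tau).v_m$ must agree with the filtration induced from $U(\lie n^-_w).v_m$ for the stated inclusion of associated graded modules to make sense.
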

\begin{proof}
The first follows straight from the definition of $\underline{\ell}_w$ and the second from Theorem~\ref{main-thm} (4) and the first part.
We have certainly by Theorem~\ref{main-thm} (4), $I_{m, \tau} \subset I_{m,w} $ and the rest follows from Corollary~\ref{coro-2}.
\end{proof}


\subsection{Proof for the PBW-graded bases}
To prove the theorem we will follow the ideas presented in \cite{FFoL11a, FFoL11b, FFoL13a, BD14, Fou14b} and show that 
$
\{ f^{\mathbf{s}}.v_\lambda \, | \, \mathbf{s} \in   m\mathcal{C}_{\underline{\ell}_w} \cap \mathbb{Z}^{N} \}
$
spans  $(U(\lie n^-_w).v_\lambda)^a$ (Corollary~\ref{spa-set}) and is linear independent in $V(\lambda)$ (Proposition~\ref{prop-lin}). \\

\begin{proof} We start with proving the spanning property. Since $(U(\lie n^-_w).v_\lambda)^a$ is spanned by applying all monomials in $S(\lie n^-_w)$ to $v_\lambda$, it is clearly enough to prove that if $\mathbf{t}=(t_{\alpha}) \in (\mathbb{Z}_{\geq 0})^{N}$,then 
\[
f^{\mathbf{t}}v_{\lambda} \in \langle f^{\mathbf{s}}v_{\lambda} : \mathbf{s}\in m\mathcal{C}_{\underline{\ell}_w} \cap \mathbb{Z}^{N} \rangle.
\]
For this we introduce a total order $\prec$ on the roots in $R^-_w$ and hence (by Proposition~\ref{prop-pos}) on the poset. We follow here \cite{BD14} and define
\begin{eqnarray}\label{tot-ord}
x_{k_2, j_2}  \prec x_{k_1, j_1} :\Leftrightarrow \left( (j_1 - k_1) > (j_2 - k_2) \right) \text{ or } \left( j_1 - k_1 = j_2 -k_2 \text{ and } k_2 < k_1 \right).
\end{eqnarray}
Note that this extends our partial order $\leq$ and is further a totally ordered subset of the ordered set considered in \cite{BD14}. We consider the induced homogeneous lexicographical order on multisets and hence on monomials in  $S(\lie n^-_w)$.

\begin{lemma}\label{lem-straight}
	Let $\bp$ be a chain in $\mathcal{P}_{\underline{\ell}_w}$ and $\bs \in \mathbb{Z}_{\geq 0}^{N}$ be a multiexponent supported on $\bp$ only.
	Suppose 
	\[
	\sum_{\alpha \in \bp}s_{\alpha} > m,
	\]
	then there exists constants $c_{\bt} \in \bc$, $\bt \in \mathbb{Z}_{\geq 0}^{N}$ such that
	\[
	\left( f^{\bs}+ \sum_{\bt \prec \bs}c_{\bt}f^\bt \right).v_\lambda = 0 \in (U(\lie n^-_w).v_\lambda)^a.
	\]
\end{lemma}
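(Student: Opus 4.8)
The plan is to prove Lemma~\ref{lem-straight} by exploiting the classical relations in $I_{m,w_0}$ coming from the theorem of \cite{FFoL11a}, pulled back to $S(\lie n^-_w)$ via \eqref{sub-one}. The starting observation is that a chain $\bp = (x_{k_1,j_1} > \ldots > x_{k_s,j_s})$ in $\mathcal{P}_{\underline{\ell}_w}$ corresponds under the isomorphism of Proposition~\ref{prop-pos} to a chain of roots $\alpha_{k_1,j_1} > \ldots > \alpha_{k_s,j_s}$ in $R^-_w$, all of which have the same first index $k$ or, more precisely, form a saturated-type chain whose sum telescopes: $\alpha_{k_1,j_1} + \cdots$ is controlled by the fact that consecutive roots differ by positive roots. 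I would first recall the key relations used in the simple-module case: for a single root $\beta$ with $m\omega_i(h_\beta) = \mu$, one has $f_\beta^{\mu+1} \in I_{m,w_0}$, and applying $\delta_\alpha$'s repeatedly produces, for any chain $\beta = \gamma_0 < \gamma_1 < \ldots < \gamma_t$, a relation whose leading term (in the order $\prec$) is a product $\prod f_{\gamma_r}^{a_r}$ with $\sum a_r = \mu+1$ and $\mu+1$ equal to the appropriate value for the top of the chain. This is exactly the mechanism behind the straightening laws in \cite{FFoL11a, BD14}.

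The key steps, in order: (i) Reduce to the case where $\bs$ is supported on a single maximal chain $\bp$ with $\sum_{\alpha\in\bp} s_\alpha = m+1$ (the general case $\sum s_\alpha > m$ follows by multiplying a degree-$(m+1)$ relation by an extra monomial $f^{\bs - \bs'}$, since $I_{m,w}$ is an ideal and the lexicographic order is multiplicative; one must just check the leading term behaves correctly under multiplication, which it does because $\prec$ is a monomial order). (ii) For such $\bp$, observe that the root $\alpha := \alpha_{k_s,j_1}$ (with the smallest first index and largest second index appearing along the chain — i.e. the root ``spanning'' the chain, which by Corollary~\ref{coro-ell} still lies in $R^-_w$) satisfies $m\omega_i(h_\alpha) \le m$ in general, but more usefully: the chain $\bp$ sits inside the interval $[\alpha_{k_s,j_s}, \alpha_{k_1,j_1}]$ and one can find, using that $f_{\alpha'}^{m+1}\in I_{m,w_0}$ for a suitable $\alpha'$ at the top of an enlarged chain, a Garnir-type relation supported on $\bp$. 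Concretely, I would take the relation in $S(\lie n^-)$ obtained by applying a monomial in the $\delta_\gamma$ (for $\gamma$ simple roots lying ``between'' the roots of the chain) to some $f_{\alpha'}^{m+1}$, chosen so that the resulting element lies in $S(\lie n^-_w)$ (using the description \eqref{root-des} and Corollary~\ref{coro-ell} that the chain and all intermediate roots stay in $R^-_w$), and whose leading monomial with respect to $\prec$ is precisely $f^{\bs}$. (iii) Verify that the leading monomial is $f^{\bs}$: this is the combinatorial heart, and it is exactly the computation carried out in \cite{BD14} for the larger ordered set — since our $\prec$ is the restriction of their order, and our chains are a subset of theirs, the leading-term identification transfers verbatim. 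Finally, conclude that $(f^{\bs} + \sum_{\bt\prec\bs} c_\bt f^\bt).v_\lambda = 0$ in the graded module because the relation lies in $I_{m,w}$ by \eqref{sub-one}.

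The main obstacle I anticipate is step (ii)–(iii): producing a relation that is \emph{exactly supported on the chain} $\bp$ (no monomials outside $\bp$ in the leading term) and verifying that its $\prec$-leading monomial equals $f^{\bs}$ rather than something strictly larger. The subtlety is that applying the differential operators $\delta_\gamma$ to $f_{\alpha'}^{m+1}$ a priori produces many monomials, some supported on roots outside $R^-_w$ or outside $\bp$; one must argue these either vanish, lie in lower order, or can be discarded upon intersecting with $S(\lie n^-_w)$. I expect this is handled, as in \cite{FFoL11a, BD14}, by choosing the sequence of operators greedily following the chain from bottom to top and invoking that $\prec$ was \emph{designed} (in \eqref{tot-ord}) so that ``moving down a root in the poset'' strictly decreases the order — so the leading term is the one where no differential operator has been ``wasted''. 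I would therefore organize the proof around an explicit inductive construction of the operator word, with the bookkeeping that the length-grading (homogeneity) plus the tiebreak by first index $k$ pins down the leading monomial uniquely. The linear-independence half of Theorem~\ref{main-thm}, Proposition~\ref{prop-lin}, is a separate matter and not needed here.
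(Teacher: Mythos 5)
Your proposal matches the paper's proof: reduce to a maximal chain, start from the power $f_\beta^{|\bs|}$ with $\beta$ the top root of the chain (which already satisfies $m\omega_i(h_\beta)=m$, so no ``enlarged chain'' or spanning root is needed --- your $\alpha_{k_s,j_1}$ should simply be the top $\alpha_{k_1,j_1}$ of the chain itself), and then apply the sequence of differential operators of \cite{BD14} built from the simple-root cover differences down the chain, with Corollary~\ref{coro-ell} guaranteeing that every root encountered lies in $R^-_w$ so the relation stays in $S(\lie n^-_w)$. Your extra reduction to $\sum s_\alpha = m+1$ followed by multiplying up is harmless but unnecessary: since $f_\beta^{|\bs|}.v_\lambda = 0$ already holds for any $|\bs|>m$, the paper runs the operator argument directly on $f_\beta^{|\bs|}$.
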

\begin{proof}
We can assume that $\bp$ is a maximal chain in $\mathcal{P}_{\underline{\ell}_w}$, say $\bp =\{ \beta_0,...,\beta_r\}$. Then $\beta_{i} \prec \beta_{i+1}$ and $\beta_{i+1} - \beta_{i} \in R^+$ for $0 \leq i \leq r$. By assumption
\[
| \bs | := \sum_{i=0}^{r}\bs_{\beta_i} > m,
\]
which implies that $f_{\beta_0}^{| \bs |}.v_\lambda = 0  \in (U(\lie n^-_w).v_\lambda)^a$.\\
Set $\gamma_i = \beta_{i-1}-\beta_{i}$ for $1 \leq i \leq r$ and define the operator (following \cite{BD14}):
\[
A=\partial_{\gamma_r}^{\mathbf{s}_{\beta_r}}...\partial_{\gamma_2}^{\mathbf{s}_{\beta_2}+...+\mathbf{s}_{\beta_r}}\partial_{\gamma_1}^{\mathbf{s}_{\beta_1}+...+\mathbf{s}_{\beta_r}}.
\]
This acts certainly as a differential operator of $S(\lie n^-)$. And the key point we will use this that 
\[
A.f_{\beta_0}^{| \bs |} \in S(\lie n^-_w).
\]
This follows since all roots $\alpha_{k,j}$, $1\leq k \leq i , i \leq j \leq n$ with $\alpha_{k,j} \leq  \beta_0$ (here we use the partial order) are in fact in $w^{-1}(R^{-})\cap R^{+}$ Corollary~\ref{coro-ell}. Now following the arguments of \cite{BD14} we see that in $(U(\lie n^-_w).v_\lambda)^a:$
\[
0 = A.f_{\beta_0}^{| \bs | }.v_\lambda = \left( f^{\bs}+ \sum_{\bt \prec \bs}c_{\bt}f^\bt \right).v_\lambda
\]
which proves the lemma.
\end{proof}
Using this straightening law we have immediately the spanning property:
\begin{corollary}\label{spa-set}
Let  $\bt=(t_{\alpha}) \in \mathbb{Z}_{\geq 0}^{N}$, then 
\[f^{\bt}v_{\lambda} \in \langle f^{\bs}v_{\lambda} \, | \, \bs\in m\mathcal{C}_{\underline{\ell}_w} \cap \mathbb{Z}^{N}  \rangle.
\]
\end{corollary}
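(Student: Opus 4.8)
The plan is to prove the spanning statement by downward induction on the monomial order $\prec$, using Lemma~\ref{lem-straight} as the straightening tool. Concretely, I would argue as follows. Take $\bt \in \mathbb{Z}_{\geq 0}^N$ and assume, inductively, that $f^{\bs'}v_\lambda$ lies in the span of $\{ f^{\bs}v_\lambda \mid \bs \in m\mathcal{C}_{\underline{\ell}_w} \cap \mathbb{Z}^N \}$ for every $\bs' \prec \bt$. If $\bt$ already lies in $m\mathcal{C}_{\underline{\ell}_w}$ there is nothing to do, so suppose it does not. Then by the defining inequalities of the dilated chain polytope $m\mathcal{C}_{\underline{\ell}_w}$ (see Section~\ref{Section1}) there is a chain $\bp$ in $\mathcal{P}_{\underline{\ell}_w}$ with $\sum_{\alpha \in \bp} t_\alpha > m$.

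The next step is to peel off the part of $\bt$ supported on $\bp$. Write $\bt = \bs + \bq$, where $\bs$ is the restriction of $\bt$ to $\bp$ (so $\bs$ is supported on the chain $\bp$ and $\sum_{\alpha \in \bp} s_\alpha > m$) and $\bq = \bt - \bs$ is supported off $\bp$. Since $f^\bt = f^{\bq} f^{\bs}$ in $S(\lie n^-_w)$, and $S(\lie n^-_w)$ acts on $(U(\lie n^-_w).v_\lambda)^a$, Lemma~\ref{lem-straight} gives
\[
f^{\bs} v_\lambda = - \sum_{\bu \prec \bs} c_{\bu} f^{\bu} v_\lambda \quad \text{in } (U(\lie n^-_w).v_\lambda)^a,
\]
and applying $f^{\bq}$ yields
\[
f^{\bt} v_\lambda = - \sum_{\bu \prec \bs} c_{\bu}\, f^{\bq} f^{\bu} v_\lambda = - \sum_{\bu \prec \bs} c_{\bu}\, f^{\bq + \bu} v_\lambda .
\]
The crucial observation, which makes the induction go through, is that $\bq + \bu \prec \bt = \bq + \bs$ whenever $\bu \prec \bs$: the monomial order $\prec$ on $S(\lie n^-_w)$ is the homogeneous lexicographic order induced from a total order on the variables, hence translation by the fixed multiexponent $\bq$ is order-preserving. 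Therefore every term $f^{\bq+\bu}v_\lambda$ appearing on the right is strictly smaller than $f^{\bt}v_\lambda$, and by the induction hypothesis each lies in $\langle f^{\bs}v_\lambda \mid \bs \in m\mathcal{C}_{\underline{\ell}_w}\cap \mathbb{Z}^N\rangle$. Hence so does $f^{\bt}v_\lambda$, completing the inductive step. The base case is the $\prec$-minimal multiexponent, which is $\bf 0$ and lies in $m\mathcal{C}_{\underline{\ell}_w}$.

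The main point requiring care — and what I expect to be the only genuine obstacle — is the compatibility of the straightening relation of Lemma~\ref{lem-straight} with multiplication by $f^{\bq}$, i.e. verifying that $\bt \mapsto \bt + \bq$ preserves $\prec$ and that no cancellation spoils the estimate; this is where one uses that $\prec$ is the \emph{homogeneous} lexicographic order (so all monomials involved have the same total degree $|\bt|$, and the leading term genuinely dominates). One should also double-check that it suffices to produce a chain $\bp$ violating a defining inequality of $m\mathcal{C}_{\underline{\ell}_w}$ rather than a facet inequality — but since the chain inequalities $\sum_{p} s_{k_p,j_p}\le m$ (over maximal chains) cut out $m\mathcal{C}_{\underline{\ell}_w}$ exactly, failure of membership is equivalent to the existence of such a chain, so this is immediate. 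Everything else is the routine Noetherian-induction bookkeeping already standard in \cite{FFoL11a, BD14}.
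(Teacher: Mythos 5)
Your proof is correct and is exactly the Noetherian induction that the paper treats as ``immediate'' from Lemma~\ref{lem-straight}: pick a violated chain inequality, restrict $\bt$ to that chain to get $\bs$, straighten $f^{\bs}$, and multiply back by $f^{\bq}$, using that the homogeneous lexicographic order is translation-invariant so the lower terms remain lower. The one point of care you flag (translation-invariance of $\prec$) is indeed the crux and holds as you say; since all terms in the straightening relation share the same total degree the comparison reduces to the translation-invariant lex comparison, so the argument matches the paper's intended one.
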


The proof of Theorem~\ref{main-thm} is complete with Proposition~\ref{prop-lin}:
\end{proof}

\begin{proposition}\label{prop-lin} The set
\[
\{ f^{\mathbf{s}}.v_{\lambda} \, | \,  : \mathbf{s}\in m\mathcal{C}_{\underline{\ell}_w} \cap \mathbb{Z}^{N} \}
\]
is linear independent in $(U(\lie n^-_w).v_\lambda)^a$.
\end{proposition}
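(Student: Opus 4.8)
The strategy is to deduce linear independence from the cominuscule case, i.e.\ from the corresponding statement for the full module $V(\lambda)=V(m\omega_i)$. Recall from \cite{BD14} that for $\lambda=m\omega_i$ the set $\{\,f^{\bs}.v_\lambda\mid\bs\in m\mathcal{C}_{\underline{\ell}_{w_0}}\cap\mathbb{Z}^{N_0}\,\}$ is a basis of $V(\lambda)^a=(U(\lie n^-).v_\lambda)^a$, where $N_0=\ell(w_0)$ and $\mathcal{P}_{\underline{\ell}_{w_0}}\cong R^-_{w_0}=\{\alpha_{k,j}\mid 1\le k\le i\le j\le n\}$ is the full rectangular poset; moreover the total order \eqref{tot-ord} on $R^-_w$ is, by construction, the restriction of the total order on $R^+$ used in \cite{BD14}. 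The first step is the purely combinatorial observation that $R^-_w\cong\mathcal{P}_{\underline{\ell}_w}$ (Proposition~\ref{prop-pos}) is an \emph{order ideal} of $\mathcal{P}_{\underline{\ell}_{w_0}}$: if $x_{k',j'}\le x_{k,j}$ in $\mathcal{P}_{\underline{\ell}_{w_0}}$ and $j\le\ell_k$, then $k\le k'$ and $j'\le j$, hence $j'\le j\le\ell_k\le\ell_{k'}$ because $\underline{\ell}_w$ is ascending, so $x_{k',j'}\in\mathcal{P}_{\underline{\ell}_w}$. Since a chain of $\mathcal{P}_{\underline{\ell}_{w_0}}$ meets this ideal in a chain, extension by zero identifies $m\mathcal{C}_{\underline{\ell}_w}\cap\mathbb{Z}^{N}$ with the set of lattice points of $m\mathcal{C}_{\underline{\ell}_{w_0}}$ supported on $R^-_w$; in particular this map is injective and $f^{\bs}\in S(\lie n^-_w)\subset S(\lie n^-)$ for every such $\bs$. (This is the combinatorial content of Corollary~\ref{coro-2}(1) applied with $\tau=w$ and $w_0$ in place of $w$, which needs only the definition of $\underline{\ell}_w$.)

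The second step compares the two graded modules. For every $s$ we have $U(\lie n^-_w)_s\subset U(\lie n^-)_s$, hence $U(\lie n^-_w)_s.v_\lambda\subset V(\lambda)_s$ and likewise in degree $s-1$; thus the inclusions of filtration pieces pass to the quotients and assemble into a graded linear map
\[
\phi\colon (U(\lie n^-_w).v_\lambda)^a\longrightarrow V(\lambda)^a .
\]
By construction $\phi$ carries the class of $f^{\bs}.v_\lambda$ — a PBW monomial in the $f_\alpha$, $\alpha\in R^-_w$, with exponent $\bs$, read modulo lower PBW degree — to the symbol of the same monomial in $V(\lambda)^a$, which is precisely the \cite{BD14}-basis vector of $V(\lambda)^a$ attached to the lattice point $\bs\in m\mathcal{C}_{\underline{\ell}_{w_0}}\cap\mathbb{Z}^{N_0}$ of the first step.

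Finally, given a relation $\sum_{\bs\in m\mathcal{C}_{\underline{\ell}_w}\cap\mathbb{Z}^{N}}c_{\bs}\,f^{\bs}.v_\lambda=0$ in $(U(\lie n^-_w).v_\lambda)^a$, apply $\phi$: since distinct $\bs$ map to distinct lattice points of $m\mathcal{C}_{\underline{\ell}_{w_0}}$, this becomes a vanishing combination of pairwise distinct vectors of the \cite{BD14}-basis of $V(\lambda)^a$, so all $c_{\bs}=0$. This proves the proposition; combined with Corollary~\ref{spa-set} it yields Theorem~\ref{main-thm}(1), from which (2) follows by conjugating with $w$ (Section~\ref{Section3}) and (3) follows by comparing dimensions via \eqref{sub-one}, the ideal there being cut out modulo the straightening relations of Lemma~\ref{lem-straight}. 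The only ingredient that is not a formality is the linear independence in the cominuscule case itself; should one wish to avoid citing \cite{BD14}, it is obtained by the usual argument — pairing $f^{\bs}.v_\lambda$ with suitable vectors through the contravariant form on $V(\lambda)$ and inducting on the order $\prec$ — and in the reduction above the single point that genuinely uses the order-ideal property is the assertion that restricting the cominuscule basis to monomials supported on $R^-_w$ gives exactly the index set $m\mathcal{C}_{\underline{\ell}_w}\cap\mathbb{Z}^{N}$.
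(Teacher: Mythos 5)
Your proof is correct and follows the paper's second proof of this proposition: identifying $m\mathcal{C}_{\underline{\ell}_w}$ with the face of $m\mathcal{C}_{\underline{\ell}_{w_0}} = P(m\omega_i)$ cut out by $s_\alpha = 0$ for $\alpha \notin R^-_w$, and then reducing linear independence to the \cite{BD14} basis theorem for $V(m\omega_i)^a$. You make explicit the graded comparison map $\phi\colon (U(\lie n^-_w).v_\lambda)^a \to V(\lambda)^a$ that the paper passes over in the sentence ``so every subset is linear independent and the proposition follows'', which is a useful clarification since that map need not be injective in general and its action on the chosen representatives has to be checked.
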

For the readers convenience we will give two proofs of this proposition, either using the normality of the polytope or identifying the polytope with a face of a well-studied polytope.

\textit{First proof:}\\
We have for any $\lambda, \mu \in P^+$: $U(\lie n^-_w).v_{\lambda + \mu} \cong U(\lie n^-_w).(v_{\lambda}  \otimes v_{\mu}) \subset V(\lambda) \otimes V(\mu)$. 
It has been shown in \cite{FFoL13} that if $S_w(\lambda)$ (resp. $S_w(\mu)$) parametrizes linear independent subsets of $(U(\lie n^-_w).v_{\lambda})^a$, resp. $(U(\lie n^-_w).v_{\mu})^a$, then $S_w(\lambda) + S_w(\mu)$ parametrizes a linear independent subset in $(U(\lie n^-_w).v_{\lambda\ \otimes v_\mu})^a$. \\
By Lemma~\ref{lem-mink} we know that for all $m,n \geq 1$: 
\[
\left( m\mathcal{C}_{\underline{\ell}_w}  \cap \mathbb{Z}^{N} \right)  + \left(n\mathcal{C}_{\underline{\ell}_w}  \cap \mathbb{Z}^{N} \right)= (m+n)\mathcal{C}_{\underline{\ell}_w} \cap \mathbb{Z}^{N}.
\]
Now, since $\{ f^{\mathbf{s}}v_{\lambda} \, | \,  : \mathbf{s}\in m\mathcal{C}_{\underline{\ell}_w} \cap \mathbb{Z}^{N} \}$ is a spanning set of $(U(\lie n^-_w).v_{m \omega_i})^a$ (Corollary~\ref{spa-set}), it remains to show that this set is a basis for the smallest possible case, namely $m=1$. The proposition follows then by induction.

The lattice points in $\mathcal{C}_{\underline{\ell}_w} $ are nothing but antichains in $R^-_w$ (recall, this poset is isomorphic to $\mathcal{P}_{\underline{\ell}_w}$). The weight of such an antichain is the sum of the corresponding roots. Now it is straightforward to see that different antichains do have different weights. So it remains to show that for a given antichain, $f_{\beta_1} \cdots f_{\beta_s}. v_{m \omega_i} \neq 0 \in U(\lie n^-_w).v_{m \omega_i}$.\\
For this recall that $V(\omega_i) = \bigwedge^i \bc^{n+1}$, and $f_{\alpha_{i,j}}.e_k = \delta_{i,k} e_{j+1}$. Since we are considering an antichain, the roots $\beta_1, \ldots, \beta_s$ are pairwise incomparable, which implies that the root vectors are acting on pairwise distinct elements of the canonical basis and also the images are pairwise distinct elements of the canonical basis. This implies that for an antichain $f_{\beta_1} \cdots f_{\beta_s}. v_{m \omega_i} \neq 0 \in U(\lie n^-_w).v_{m \omega_i}$. On the other hand, the degree of this element is obviously equals to $s$, which implies that this $f_{\beta_1} \cdots f_{\beta_s}. v_{m \omega_i} \neq 0 \in U(\lie n^-_w).v_{m \omega_i}$.\\

\textit{Second proof}:\\
Claim: $\; \; \; m\mathcal{C}_{\underline{\ell}_w}$ is the face of the polytope $m\mathcal{C}_{\underline{\ell}_{w_0}}$ defined by $s_\alpha = 0$ for all $\alpha \notin R^-_w$. \\

\noindent
Let us see first why this claim proves the proposition. $m\mathcal{C}_{\underline{\ell}_{w_0}}$ is the same polytope as $P(m \omega_i)$ defined in \cite{BD14} (in type $A_n$). It was proved there that 
\[
\{ f^{\bs}.v_\lambda \, | \bs \in P(m \omega_i) \} \subset V(m \omega_i)^a
\]
is a basis of $V(m \omega_i)^a$, so especially linear independent. So every subset is linear independent and the proposition follows once we prove the claim.

\textit{Proof of the claim}: The claim follows similarly to \cite[Proposition 2]{Fou14b}: every maximal chain in $\mathcal{P}_{\underline{\ell}_w}$ can be extended to a maximal chain of $\mathcal{P}_{\underline{\ell}_{w_0}}$ and the restriction of every maximal chain from $\mathcal{P}_{\underline{\ell}_{w_0}}$ can be extended to a maximal chain of $\mathcal{P}_{\underline{\ell}_w}$.


\section{Gelfand-Tsetlin polytopes and Kogan faces}\label{Section4}
We will show in this section, that the order polytope can be identified with a Kogan face of a Gelfand-Tsetlin polytope. We then show that the character of this face is the character of the Demazure module corresponding to the chain polytope (of the same poset). \\
Denote $\sigma$ the automorphism of the Dynkin diagram of type $A_n$ and also the induced automorphism of $\lie{sl}_{n+1}$, this induces also an automorphism on the character lattice by $\sigma(\alpha_{i}) = \alpha_{\sigma(i)} = \alpha_{n+1-i}$. For $w \in W$ we have
\begin{eqnarray}\label{dual}
\sigma( \operatorname{char } V_w(\lambda)) = \operatorname{char } V_{\sigma(w)}(\sigma(\lambda)).
\end{eqnarray}
This duality is implicitly used in our construction of Kogan faces. For simplicity, our definition of the character of a Kogan face reflects this automorphism.\\

\noindent
We recall here briefly Gelfand-Tsetlin polytopes (or Cetlin or Zetlin), e.g. the marked order polytopes(see \cite{ABS11}) corresponding to the following poset:\\
The set of vertices is $\{ x_{k,j} \, | \,  0 \leq k \leq n, 1 \leq j \leq n+1-k \}$ and the cover relations are
\[
x_{k-1,j} \geq x_{k,j} \geq x_{k-1,j+1}
\]
for all $k,j$. We do arrange the vertices in a triangle, where the $k$-th row is $\{ x_{k,1}, \ldots, x_{k,n+1-k} \}$. In the example $n = 2$: \\
\begin{picture}(70,70)
\put(260,60){$x_{0,3}$}\put(230,60){$x_{0,2}$}\put(200,60){$x_{0,1}$}\put(215,30){$x_{1,1}$}\put(245,30){$x_{1,2}$}\put(230,0){$x_{2,1}$}
\end{picture}\\

Let $\lambda = \sum m_i \om_i$ and set $s_{0,j} = m_n + \ldots + m_j$. Then the Gelfand-Tsetlin polytope associated to $\lambda$ is defined as
\[
GT(\lambda) = \{ (s_{k,j}) \in \mathbb{R}^{n(n+1)/2} \mid  s_{k-1,j} \geq s_{k,j} \geq s_{k-1,j+1} \}.
\]

Let $\bs \in GT(\lambda)$, then we define the weight of $\bs$ as
\[
\operatorname{wt } \bs := \sum_{k=0}^n \sum_{j=i}^{n+1} (\bs_{k+1,j} - \bs_{k,j}) \epsilon_{k+1}
\]
where we set $\bs_{p,q} = 0$ if $x_{p,q}$ is not a vertex of the Gelfand-Tsetlin poset. Note, that we are dualizing the usual weight of a point in the Gelfand-Tsetlin polytope. Let $S$ be any subset of the lattice points of $GT(\lambda)$, then we define 
\[
\operatorname{char } S := \sum_{\bs \in S} e^{\operatorname{wt } \bs}.
\]


\subsection{Kogan faces}
We consider certain faces of the Gelfand-Tsetlin polytope introduced by Kogan \cite{Kog00}. These faces are given by setting some of the inequalities to equalities. We denote 
\[
A_{i,k} : x_{i,k} = x_{i+1,k}.
\]
Let $F$ be a face defined by some of these inequalities. One can associate a Weyl group element to this face as follows:\\
We start with the identity element and from the bottom row to the top row of the Gelfand-Tsetlin polytope. In each row we go from left to right, for every equality $A_{k,j}$ we multiply $s_{k+j}$ to the right end. The resulting Weyl group element $w(F)$ is called the type of the face.\\

\begin{example}
Let $F$ be the face defined by\\
\begin{picture}(0,120)
\put(100,100){$\bullet$} \put(190,100){$\bullet$}\put(160,100){$\bullet$}\put(130,100){$\bullet$}\put(115,70){$\bullet$}\put(145,70){$\bullet$}\put(175,70){$\bullet$}\put(130,40){$\bullet$}\put(160,40){$\bullet$}\put(145,10){$\bullet$}
\put(148,15){$\line(-1,2){13}$}
\put(133,45){$\line(-1,2){13}$}
\put(118,75){$\line(-1,2){13}$}
\put(148,75){$\line(-1,2){13}$}
\end{picture}\\
Then $w(F) = s_3s_2s_1s_2$. 
\end{example}

Note that different faces may have the same type. In the following we will see how such faces are related.\\
Let $F$ be a reduced face such that $A_{j,k}, A_{j, k+1}, A_{j+\ell, k+1 }$ are not equalities of the face for some $j,k,\ell$ but all $A_{j',k'}$ with $j+1 \leq j' \leq j + \ell-1$, $k' \in \{k,k+1\}$ and $A_{j+\ell,k}$ are equalities in $F$. Then we can substitute the equality $A_{j+\ell,k}$ by the equality $A_{j,k+1}$ without changing the type of the face. We call $\ell$ the size of the move.\\
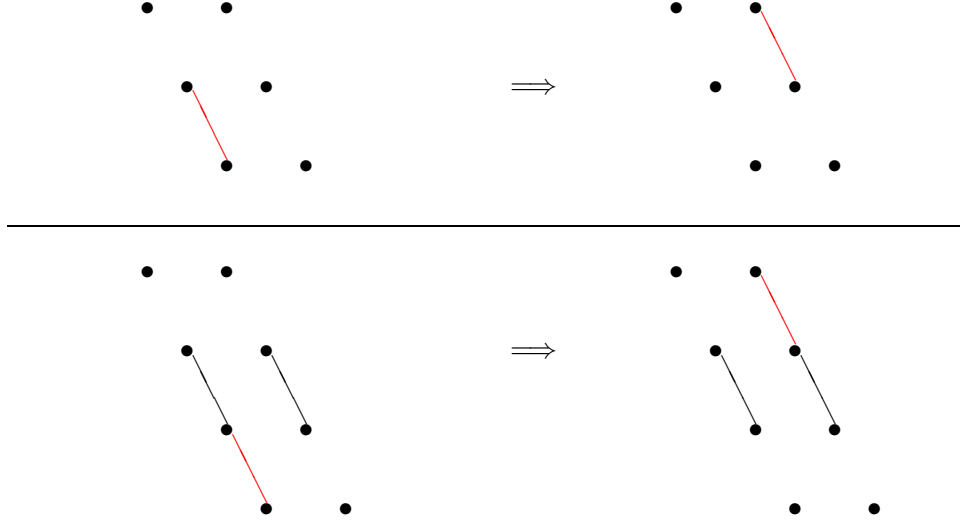
\begin{figure}[h]\label{move-2}
\begin{picture}(500,200)
\put(130,200){$\bullet$}\put(100,200){$\bullet$}\put(115,170){$\bullet$}\put(145,170){$\bullet$}\put(130,140){$\bullet$}\put(133,145){{\color{red}$\line(-1,2){13}$}}\put(160,140){$\bullet$}
\put(240, 170){$\Longrightarrow$}
\put(330,200){$\bullet$}\put(300,200){$\bullet$}\put(315,170){$\bullet$}\put(345,170){$\bullet$}\put(348,175){{\color{red}$\line(-1,2){13}$}}\put(330,140){$\bullet$}\put(360,140){$\bullet$}
\put(50,120){$\line(1,0){360}$}
\put(130,100){$\bullet$}\put(100,100){$\bullet$}\put(115,70){$\bullet$}\put(145,70){$\bullet$}\put(130,40){$\bullet$}\put(133,45){$\line(-1,2){13}$}
\put(160,40){$\bullet$}\put(163,45){$\line(-1,2){13}$}\put(145,10){$\bullet$}\put(148,15){\color{red}{$\line(-1,2){13}$}}\put(175,10){$\bullet$}
\put(240, 70){$\Longrightarrow$}
\put(330,100){$\bullet$}\put(300,100){$\bullet$}\put(315,70){$\bullet$}\put(345,70){$\bullet$}\put(348,75){{\color{red}$\line(-1,2){13}$}}
\put(330,40){$\bullet$}\put(333,45){$\line(-1,2){13}$}\put(360,40){$\bullet$}\put(363,45){$\line(-1,2){13}$}\put(345,10){$\bullet$}\put(375,10){$\bullet$}
\end{picture}
\caption{Ladder moves of size 1 and 2}
\end{figure}
\begin{proposition*}\label{prop-mov}  \cite{Kog00}
Suppose $F, F'$ do have the same type, then there are ladder moves (as above or their inverses) such that $F$ can be transformed into $F'$. There is a unique reduced Kogan face $F_w$ of type $w$ such that all other Kogan faces of type $w$ can be obtained by such ladder moves replacing  $A_{j+\ell,k}$ by  $A_{j,k+1}$. This face is called the Gelfand-Tsetlin face of type $w$.
\end{proposition*}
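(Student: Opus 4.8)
This Proposition is attributed to \cite{Kog00}; here is a plan for a proof. The idea is to recode reduced Kogan faces as subwords of a fixed ``triangular'' word, equivalently as reduced pipe dreams (RC-graphs) on a staircase, and to deduce everything from the combinatorics of reduced words. We work throughout with reduced faces, since ladder moves are only defined for these.

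The first step is to set up the dictionary. A reduced Kogan face $F$ is determined by its set $E(F)$ of imposed equalities $A_{i,k}$; recording $E(F)$ as a $0/1$-filling of the staircase diagram of cells $(i,k)$ with $0\le i$, $1\le k$ and $i+k\le n$, and labelling the cell $(i,k)$ by the simple reflection $s_{i+k}$, the prescription ``bottom row to top row, left to right'' produces from $E(F)$ a word $\mathbf{s}(F)$ in the simple reflections whose product is the type $w(F)$; the face is reduced exactly when $|E(F)|=\operatorname{length}(w(F))$, i.e.\ when $\mathbf{s}(F)$ is a reduced word. In this language $E(F)$ is precisely a reduced pipe dream for $w(F)$, the size-$\ell$ ladder moves described before the Proposition are a form of the ladder moves on pipe dreams, and the Gelfand-Tsetlin face of type $w$ will turn out to be the top-justified pipe dream $D_{\mathrm{top}}(w)$. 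Since a ladder move only exchanges one equality for another it preserves $|E(F)|$, and, as recalled in the text preceding the Proposition, it preserves $w(F)$; hence it preserves both the type and the property of being reduced.

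The Proposition now follows from two facts about reduced pipe dreams of a fixed $w\in S_{n+1}$: \textbf{(i)} any two of them are connected by ladder moves and their inverses, and \textbf{(ii)} there is exactly one, $D_{\mathrm{top}}(w)$, to which no inverse ladder move applies. To pass from these to the Proposition, introduce the monovariant $\mu(F):=\sum_{A_{i,k}\in E(F)}i$. An inverse ladder move replaces $A_{j,k+1}$ by $A_{j+\ell,k}$, i.e.\ it pushes a cell down by $\ell\ge 1$ rows, so it strictly increases $\mu$; as $\mu$ is a bounded nonnegative integer, iterating inverse moves from any reduced face of type $w$ must terminate at a face to which no inverse move applies, which by (ii) is $F_w:=D_{\mathrm{top}}(w)$. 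Reading such a chain backwards exhibits every reduced face of type $w$ as obtained from $F_w$ by forward ladder moves (those of the form $A_{j+\ell,k}\rightsquigarrow A_{j,k+1}$); this is the second assertion of the Proposition, with $F_w$ the distinguished face and its uniqueness being (ii), and the connectedness of any $F,F'$ of the same type is (i).

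The substance, and the main obstacle, is (i) together with (ii). For (i) I would induct on the position, in reading order, of the first cell where $E(F)$ and $E(F')$ differ: if that cell carries a cross in $E(F)$ but not in $E(F')$, then, because $\mathbf{s}(F)$ and $\mathbf{s}(F')$ are reduced words for the same $w$ that agree on the earlier cells, reducedness forces --- in the two columns below that cell --- exactly the ladder configuration needed to push a cross of $E(F')$ up into the cell; after the move $E(F)$ and $E(F')$ agree at one more cell, and one iterates. For (ii) one checks that a reduced face admitting no inverse move must be top-justified, and that $D_{\mathrm{top}}(w)$ is uniquely determined by $w$, e.g.\ by constructing it from a canonical reduced word of $w$. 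These are the Bergeron-Billey / Knutson-Miller arguments for RC-graphs transported to the Gelfand-Tsetlin triangle; the genuinely delicate point --- where one really uses that both words are reduced and that the label of $(i,k)$ depends only on the antidiagonal $i+k$ --- is checking that the required ladder configuration is always present, and relatedly that Kogan's ladder moves of all sizes generate the same equivalence relation as the standard chute and ladder moves. A more self-contained route to (i) and (ii) is induction on $n$, splitting off the crosses on the hypotenuse of the staircase (those labelled $s_n$) and reducing to the $\mathfrak{sl}_n$-triangle; this avoids quoting RC-graph theory at the cost of reproving it.
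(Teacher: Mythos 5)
The paper does not prove this Proposition; it is quoted verbatim from Kogan's thesis \cite{Kog00}, as the citation in the statement indicates, so there is no in-paper proof to compare against. Your reconstruction via the dictionary with reduced pipe dreams (RC-graphs), the Bergeron--Billey/Knutson--Miller connectedness theorem, the uniqueness of the extremal pipe dream, and the monovariant $\mu(F)=\sum_{A_{i,k}\in E(F)} i$ is a reasonable sketch of how the cited result is proved, and you have correctly isolated the two facts (connectedness under moves; uniqueness of the move-maximal face) that carry the real content.

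Two small caution flags on the reduction. First, Kogan reads the word bottom row to top, and within a row left to right, with the cell $A_{k,j}$ contributing $s_{k+j}$, whereas the standard RC-graph conventions read the staircase top-to-bottom and right-to-left with a label $s_{i+j-1}$; the two pictures are related by a reflection of the triangle and an inversion of the permutation, and one must check that this dictionary carries Kogan's size-$\ell$ ladder moves to the usual RC-graph ladder moves. It does, but it should be spelled out, since a mis-set convention would silently break the monovariant argument (for instance by reversing which face is the ``source''). Second, the claim that a reduced face with no available inverse move is top-justified, and that the top-justified reduced face of a given type exists and is unique, is exactly the statement being proved rather than a lighter fact, so quoting it from RC-graph theory and ``transporting'' it is fine, but be explicit that this is a citation rather than an independent check. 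With those two points made precise your sketch would constitute a complete proof, which is however unnecessary for the paper, since it uses Kogan's result as a black box.
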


\begin{corollary}
Let 
\[\tau =    \left(s_{\ell_i +1} \cdots s_{n} \right) \cdots\left( s_{\ell_k - i+k +1} \cdots s_{n- (i-k)}\right) \cdots \left(s_{\ell_1 - i +2} \cdots s_{n- i+1}\right)\]
(in each bracket, the sequence of simple roots is strictly increasing) then the Gelfand-Tsetlin face $F_\tau$ is given by the equalities:
\[
\bigcup_{k = 1}^{i}  \{A_{\ell_k -i+ k , 1}, \ldots , A_{\ell_k - i + k,  n - \ell_k } \}.
\]
\end{corollary}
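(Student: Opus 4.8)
The plan is to identify the face $F$ of the Gelfand--Tsetlin polytope defined by the equalities $\bigcup_{k=1}^{i}\{A_{\ell_k-i+k,1},\dots,A_{\ell_k-i+k,n-\ell_k}\}$ as a \emph{reduced} Kogan face of type $\tau$ which is moreover the source in Kogan's ladder-move picture; by the uniqueness statement of that proposition (see \cite{Kog00}) this forces $F=F_\tau$. Abbreviating $r_k:=\ell_k-i+k$, I would first record the bookkeeping facts that $r_1<r_2<\cdots<r_i$ (because $\ell_{k-1}\le\ell_k$), so the rows carrying equalities are distinct; that in row $r_k$ the equalities $A_{r_k,1},\dots,A_{r_k,n-\ell_k}$ fill an initial segment of columns; and that all these are genuine edges of the Gelfand--Tsetlin poset, the $k$-th block being empty when $\ell_k=n$.

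Step one is to compute the type $w(F)$ by running the algorithm of Section~\ref{Section4}: reading rows from the bottom up and columns left to right, the equality $A_{r_k,c}$ contributes the generator $s_{r_k+c}$ on the right, so row $r_k$ contributes the block $s_{r_k+1}s_{r_k+2}\cdots s_{r_k+(n-\ell_k)}=s_{\ell_k-i+k+1}\cdots s_{n-(i-k)}$; concatenating over $k=i,i-1,\dots,1$ reproduces verbatim the word displayed for $\tau$, so $w(F)=\tau$. Step two is to check that this word --- which has $\sum_{k=1}^{i}(n-\ell_k)$ letters --- is reduced, so that $F$ is a reduced Kogan face. For this I would apply the Dynkin diagram automorphism $\sigma$, $\sigma(s_j)=s_{n+1-j}$, which preserves length; the substitution $j=i-k+1$, $\ell'_j:=n+i-1-\ell_{i-j+1}$ turns $\sigma$ applied to the word for $\tau$ into $\prod_{j=1}^{i}\big(s_{\ell'_j-(i-j)}\cdots s_j\big)$, which is exactly the reduced decomposition of Proposition~\ref{prop-fav} attached to the sequence $\underline{\ell}'=(i-1\le\ell'_1\le\cdots\le\ell'_i\le n)$ --- one checks $\underline{\ell}'$ is weakly increasing and in range. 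Hence $\sigma(\tau)$, and therefore $\tau$, has length equal to the number of equalities.

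Step three is to see that $F$ is the source. By Kogan's proposition, $F_\tau$ is the unique reduced Kogan face of type $\tau$ that is not the target of a forward ladder move $A_{j+\ell,k}\mapsto A_{j,k+1}$, and since every Kogan face of type $\tau$ is reached from $F_\tau$ by such moves, it suffices to show that no forward move produces $F$. The target of a forward move carries an equality $A_{j,k+1}$ for which $A_{j,k}$ is \emph{not} an equality; but in $F$, whenever $A_{r_k,c}$ is an equality with $c\ge 2$ then $c-1\in\{1,\dots,n-\ell_k\}$, so $A_{r_k,c-1}$ is again an equality of $F$, while an equality $A_{r_k,1}$ cannot be of the form $A_{j,k+1}$. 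So $F$ is no forward-move target, hence is the source, hence $F=F_\tau$. I expect the only real work to sit in the two index matchings --- reconciling the type-algorithm output with the displayed word, and recognising $\sigma(\tau)$ as a Proposition~\ref{prop-fav} word --- together with a careful treatment of the degenerate blocks $\ell_k=n$; the source-face argument itself is immediate from the ``initial segment of columns'' shape of the defining equalities.
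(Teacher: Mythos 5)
The paper states this corollary with no proof, treating it as an immediate consequence of Kogan's ladder-move proposition, so there is no ``paper proof'' to compare against; your job was therefore to supply the argument, and you have done so correctly. Your three steps are exactly what is needed: the index bookkeeping and the type computation (reading rows $r_i > r_{i-1} > \cdots > r_1$ bottom-to-top and columns left-to-right) reproduce the displayed word for $\tau$; the Dynkin-automorphism trick cleanly reduces the reducedness of that word to Proposition~\ref{prop-fav}, and the substitution $\ell'_j = n+i-1-\ell_{i-j+1}$ does land in the admissible range $i-1\le\ell'_1\le\cdots\le\ell'_i\le n$; and the ``initial segment of columns'' shape of the equality set rules out $F$ being the target of any forward ladder move. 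One point you state slightly loosely: the rephrasing ``$F_\tau$ is the unique reduced Kogan face of type $\tau$ that is not the target of a forward ladder move'' is not verbatim Kogan's proposition but an equivalent form of it, valid because the forward-move graph is acyclic (each move strictly decreases the row index of the modified equality by $\ell\ge1$; the paper's later Corollary~\ref{kog-ord} records the same fact via inclusion). You use it correctly, but it is worth flagging that this equivalence is an implicit ingredient rather than the literal statement of Proposition~\ref{prop-mov}.
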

We see that if $A_{j,k}$ is an equality of the Gelfand-Tsetlin face $F_{\tau}$, then $A_{j,k-1}$ is also an equality of the face:\\
\begin{picture}(70,80)
\put(160,60){$\bullet$} \put(130,60){$\bullet$}\put(100,60){$\bullet$}\put(115,30){$\bullet$}\put(145,30){$\bullet$}\put(148,35){$\line(-1,2){13}$}\put(130,0){$\bullet$}
\put(240, 30){$\Longrightarrow$}
\put(360,60){$\bullet$}\put(330,60){$\bullet$}\put(300,60){$\bullet$}\put(315,30){$\bullet$}\put(318,35){\color{red}{$\line(-1,2){13}$}}\put(345,30){$\bullet$}\put(348,35){$\line(-1,2){13}$}\put(330,0){$\bullet$}
\end{picture}


\subsection{Implicit equations and maximal faces}
In this paper we are considering modules of rectangular highest weight. So if $\lambda = m \omega_i$ and $w \in W$, then we are considering here the Gelfand-Tsetlin polytope $GT(\omega_{n+1-i})$ (due to \eqref{dual}). $\sigma$ induces an bijection $W^i \leftrightarrow W^{n+1-i}$, let $w_0$ then the longest element in $W^i$, then $\sigma(w_0) = w_0^{-1}$ is the longest element in $W^{n+1-i}$. We are interested here in another bijection, namely $w \mapsto ww_0^{-1}$. Then
\[
\left(s_{\ell_1 - (i-1)} \cdots s_1\right)\cdots \left(s_{\ell_i} \cdots s_i\right) \mapsto  \left(s_{\ell_i +1} \cdots s_{n} \right)  \cdots \left(s_{\ell_1 - (i-1) +1} \cdots s_{n- i+1}\right).
\]

Since the highest weight is not regular, we have certain \textit{implicit} equations in each face (in contrast to \textit{explicit} equations defined by fixing the type of the face). In every face we have the following equations due to the fixed highest weight:
\begin{eqnarray}\label{rec-eqn}
\{ A_{k,j} \, | \,   k+j \leq n-i \} \cup \{ A_{k,j} \, | \,  j \geq n-i+1\}
\end{eqnarray}
Now as $x_{0,j} = m$ for $j \leq n-i+1$ and $x_{0,j} = 0$ for $ j  > n-i+1$ we have 
\begin{eqnarray}
x_{k,j} = m \text{ if } k+j \leq n-i+1\, ; \, x_{k,j} = 0 \text{ if } j \geq n-i+2.
\end{eqnarray}
This basically cuts of the upper left and the upper right corner of the Gelfand-Tsetlin triangle. But there are more implicit equations. Suppose $A_{k,j}$ is an (implicit or explicit) equation of a face with $x_{k,j} = m$, then we have the implicit equations: $A_{k+1,j-1}, \ldots, A_{k+j-1,1}$. Further if $A_{k,j}$ is an (explicit or implicit) equation with $x_{k,j} = m$, then all $A_{k-p, j}$ are implicit equations for $p = 0, \ldots, k$.\\
\begin{figure}[H]\label{impl}
\begin{picture}(300,200)
\put(60,190){$\bullet$}\put(30,190){$\bullet$}\put(15,160){$\bullet$}\put(45,160){$\bullet$}\put(75,160){$\bullet$}\put(78,165){$\line(-1,2){13}$}\put(30,130){$\bullet$}\put(60,130){$\bullet$}\put(45,100){$\bullet$}
\put(140, 160){$\Longrightarrow$}
\put(260,190){$\bullet$}\put(230,190){$\bullet$}\put(215,160){$\bullet$}\put(245,160){$\bullet$}\put(275,160){$\bullet$}\put(278,165){$\line(-1,2){13}$}\put(230,130){$\bullet$}\put(260,130){$\bullet$}\put(263,135){{\color{red}$\line(-1,2){13}$}}\put(245,100){$\bullet$}\put(248,105){{\color{red}$\line(-1,2){13}$}}

\put(-50,80){$\line(1,0){360}$}
\put(60,60){$\bullet$}\put(30,60){$\bullet$}\put(0,60){$\bullet$}\put(15,30){$\bullet$}\put(45,30){$\bullet$}\put(30,0){$\bullet$}\put(33,5){$\line(-1,2){13}$}\put(140, 30){$\Longrightarrow$}
\put(260,60){$\bullet$}\put(230,60){$\bullet$}\put(200,60){$\bullet$}\put(215,30){$\bullet$}\put(218,35){\color{red}{$\line(-1,2){13}$}}
\put(245,30){$\bullet$}\put(230,0){$\bullet$}\put(233,5){$\line(-1,2){13}$}
\end{picture}
\caption{Implicit equations}
\end{figure}
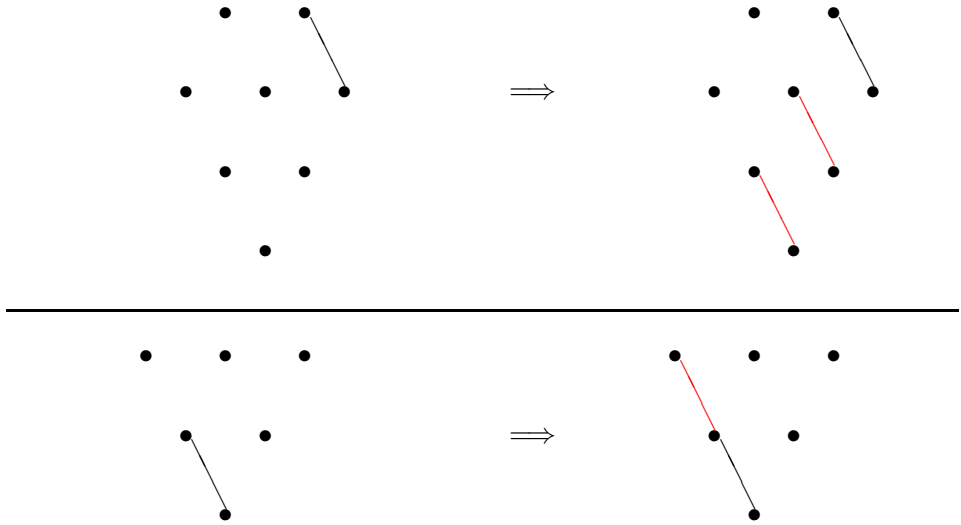

For fixed $0 \leq k \leq n$ we consider the diagonal starting in $x_{k,1}$ and pointing in direction of the upper right corner, $x_{k,1}, x_{k-1, 2}, \ldots...$:\\
\begin{picture}(70,120)
\put(290,100){$\bullet$}\put(200,100){$\bullet$}\put(260,100){\color{red}{$\bullet$}}
\put(230,100){$\bullet$}\put(215,70){$\bullet$}\put(245,70){\color{red}{$\bullet$}}\put(275,70){$\bullet$}\put(230,40){\color{red}{$\bullet$}}\put(260,40){$\bullet$}\put(245,10){$\bullet$}
\end{picture}\\
Then the first entries in each diagonal are fixed (via implicit or explicit equations) and equal to $m$. We denote the number of such element in the $k$-th column $c_k$ (there might be of course diagonals with $c_k = 0$). Then certainly $c_k = k+1$ for $0 \leq k \leq n-i$. Even more we have for $n-i \leq k \leq n-1$ : $c_k \geq c_{k+1}$ (due to the implicit equations). This implies that the elements $x_{k,j}$ which are fixed by some implicit or explicit equations and equals to $m$ form a staircase pattern above the lower right bound of the Gelfand Tsetlin pattern.\\

\noindent
Let $\tau = ww_0^{-1} =  \left(s_{\ell_i +1} \cdots s_{n} \right) \cdots\left( s_{\ell_k - i+k +1} \cdots s_{n- (i-k)}\right) \cdots \left(s_{\ell_1 - i +2} \cdots s_{n- i+1}\right)$ (in each bracket, the sequence of simple roots is strictly increasing) and suppose $F$ is the Gelfand-Tsetlin face $F_\tau$, then it is easy to see that
\begin{eqnarray}\label{gt-kog}
c^{\tau}_k = \begin{cases} k+1 \text{ if } k \leq n+1-i \\ n - \ell_k \text{ if } k > n+1-i \end{cases}
\end{eqnarray}
and there are no equations (implicit or explicit) for the variables $x_{k,j}$, with 
\[n+1-i\leq k \leq n \; , \; n+1-\ell_k \leq j \leq n+1-i.\]
%

\begin{proposition}\label{move-gt}
Let $F$ be a face of the Gelfand-Tsetlin polytope $GT(m \omega_{n+1-i})$ with $w(F) = \tau$, then $c_{k} \geq c_k^{\tau}$ and there are no further equations (implicit or explicit) for the variables $x_{k,j}$, with $n+1-i \leq k \leq n, c_k +1 \leq j \leq n+1-i$ for all $k$. 
\end{proposition}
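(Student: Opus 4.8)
The plan is to reduce everything to the single Gelfand-Tsetlin face $F_\tau$, for which the statement is already available. By the ladder-move calculus (Proposition~\ref{prop-mov}), every Kogan face $F$ of $GT(m\omega_{n+1-i})$ with $w(F)=\tau$ is obtained from $F_\tau$ by a finite sequence of ladder moves, each replacing an equality $A_{j+\ell,k}$ by an equality $A_{j,k+1}$; and for $F_\tau$ itself the assertion is precisely \eqref{gt-kog} together with the remark following it (there $c_k=c_k^\tau$, and the free variables are exactly the $x_{k,j}$ with $n+1-i\le k\le n$ and $n+1-\ell_k\le j\le n+1-i$, i.e.\ $c_k^\tau+1\le j\le n+1-i$). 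So I would induct on the number $r$ of ladder moves needed to pass from $F_\tau$ to $F$, the base case $r=0$ being this description of $F_\tau$. Throughout I write $c_k(F)$ for the integer $c_k$ attached to a face $F$.

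For the inductive step let $F'$ be obtained from a face $F$ satisfying the assertion by one ladder move replacing $A_{j+\ell,k}$ by $A_{j,k+1}$. First I would show $c_k(F')\ge c_k(F)$ for every $k$. The move keeps all rungs $A_{j+1,k},\dots,A_{j+\ell-1,k}$ and $A_{j+1,k+1},\dots,A_{j+\ell-1,k+1}$ and trades the lowest left rung $A_{j+\ell,k}$ for the highest right rung $A_{j,k+1}$, i.e.\ it pushes an equality towards the upper-right corner, where the boundary entries $x_{0,q}=m$ (for $q\le n-i+1$) sit. Using the two implicit-equation propagation rules recalled before this proposition — an equality at an entry equal to $m$ forces all equalities down its lower-left diagonal, and all equalities above it in the same column — one checks that such a move never destroys a forced-to-$m$ entry, and, when $x_{j,k+1}$ becomes forced equal to $m$, only enlarges the forced-to-$m$ staircase; hence no $c_k$ decreases. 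Together with the inductive hypothesis $c_k(F)\ge c_k^\tau$ this gives the first claim for $F'$, and it shows that the candidate free region $\{\,n+1-i\le k\le n,\ c_k(F')+1\le j\le n+1-i\,\}$ for $F'$ is contained in the one for $F$.

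Second I would check that no equation of $F'$ lies inside this (possibly smaller) region. Every equation of $F'$ is either an equation of $F$, or the new equation $A_{j,k+1}$, or one of its implicit consequences. Equations of $F$ lie, by the inductive hypothesis, outside the free region of $F$, hence outside the smaller region for $F'$. The equation $A_{j,k+1}$ and its implicit consequences lie, by the analysis of the previous paragraph, on or inside the forced-to-$m$ staircase of $F'$, hence in the range $j\le c_k(F')$, i.e.\ outside $\{c_k(F')+1\le j\le n+1-i\}$; and discarding $A_{j+\ell,k}$ creates no new equation. This closes the induction and proves the proposition.

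The main obstacle is the bookkeeping in the inductive step precisely when the ladder move straddles the critical diagonal $k=n+1-i$, where the formula for $c_k^\tau$ switches from $k+1$ to $n-\ell_k$ and the implicit equations coming from the rectangular highest weight $m\omega_{n+1-i}$ start to interact with the explicit rungs of the move. Moves lying entirely in the constant region ($p+q\le n-i+1$) only rearrange equalities among entries already equal to $m$ or $0$, and moves lying entirely below the staircase only permute free variables and leave every $c_k$ unchanged; so the genuine content is to verify, for a straddling move of arbitrary size $\ell$, that the staircase can only grow and that the newly forced entries land exactly on or below the updated staircase. This is a finite verification governed by the positions of $(j,k)$ and $(j+\ell,k)$ relative to the line $p+q=n-i+1$, and I expect it to be the most delicate part of the write-up.
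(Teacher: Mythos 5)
Your overall strategy matches the paper's: induct on ladder moves starting from $F_\tau$ (base case \eqref{gt-kog}), and use the implicit-equation propagation rules to argue that the forced-to-$m$ staircase only grows and that the new equality lands on it. But the entire content of the proposition is in the one sentence you defer to a ``finite verification'' --- ``one checks that such a move never destroys a forced-to-$m$ entry'' --- and your preview of how that verification would go contains a misconception. You anticipate a class of moves ``lying entirely below the staircase'' that supposedly ``only permute free variables and leave every $c_k$ unchanged''. No such move can occur: the bottom rung $A_{j+\ell,k}$ of any ladder move is required to be an \emph{explicit} equality of $F$, and the inductive hypothesis says $F$ has \emph{no} equality (explicit or implicit) among the free variables, so $A_{j+\ell,k}$ already sits on the staircase. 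Establishing this non-existence is exactly the role of the contradiction argument in the paper's $\ell=1$ subcase; it is not a harmless case you can wave off.

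The paper's actual case split is by the size $\ell$ of the move, not by the position of $(j,k)$ relative to the critical diagonal $k=n+1-i$ as you propose. For $\ell>1$ the face does not change at all: the remaining explicit rung $A_{j+\ell-1,k+1}$ forces $A_{j+\ell,k}$ to stay as an \emph{implicit} equation of $F'$, while $A_{j,k+1}$ was already implicit in $F$ because of the rung $A_{j+1,k+1}$ and the staircase hypothesis, so $F'=F$. For $\ell=1$, either $x_{j,k+1}$ was already forced to $m$, in which case $F'\subsetneq F$ and the single new constraint $A_{j,k+1}$ lies on the staircase, or one reaches a contradiction with the ladder-move hypotheses (which is the situation your ``below the staircase'' case would fall into). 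Recasting your ``straddling'' bookkeeping along these lines --- i.e., splitting by $\ell$ and using the inductive hypothesis to pin down where $A_{j+\ell,k}$ can sit --- is what closes the gap.
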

\begin{proof} We will use Proposition~\ref{prop-mov} to prove this by induction. The statement is certainly true for $F_\tau$ and suppose the statement is true for $F$. We consider $F'$ obtained from $F$ by a ladder move replacing $A_{j+\ell,k}$ by  $A_{j,k+1}$. There are two cases to be considered:
\begin{itemize}
\item The ladder move is of size $\ell > 1$ (Figure~\ref{move-2}). So we have $A_{j,k}, A_{j, k+1}, A_{j+\ell, k+1}$ are not explicit equations of $F$ but all $A_{j',k'}$ with $j+1 \leq j' \leq j + \ell-1$, $k' \in \{k,k+1\}$ and $A_{j+\ell,k}$ are equalities in $F$. Let $F'$ be obtained by replacing $A_{j + \ell, k}$ by $A_{j,k+1}$. \\
By assumption, for $j' = j + \ell-1, k' = k+1$, we have that $A_{j+\ell-1, k+ 1}$ is an explicit equation of the faces $F$ and $F'$.  Then we have, see Figure~\ref{impl}, $A_{j + \ell, k}$ as an implicit face of $F'$. Further, since by assumption $A_{j+1, k+1}$ is an explicit face of $F$ and $F'$, and since by induction the equalities of $F$ form the described pattern, we have $A_{j,k+1}$ is an implicit equation in $F$. This implies that $F = F'$.

\item The ladder move is of size $\ell = 1$ (Figure~\ref{move-2}). Then $A_{j,k}, A_{j, k+1}, A_{j+1, k+1}$ are not explicit equations of $F$.  Suppose we have fixed $x_{j,k+1} = m$ in $F$ by an implicit or explicit equation. Then $A_{j+1,k}$ is an implicit equation in $F'$ and hence $F' \subset  F$ (they may differ only by the equation $A_{j,k+1}$). \\
Suppose $x_{j,k+1}$ is not fixed by an implicit or explicit equation in $F$. Then by induction $A_{j-1,k+1}$ is not an implicit or explicit equation of $F$. But then $A_{j,k}$ is not an implicit equation of $F$, while $x_{j+1,k} = m$ by assumption. So $A_{j,k}$ has to be an explicit equation which is a contradiction (since we are considering ladder moves here and therefore $A_{j,k}$ is not an explicit equation).
\end{itemize}
\end{proof}
The set of Kogan faces of type $\tau$ is partially ordered by inclusion. From the proof of Proposition~\ref{move-gt} we have immediately the following lemma:
\begin{corollary}\label{kog-ord}
Ladder moves on Kogan faces are compatible with this partial order. So if $F, F'$ are Kogan faces of type $\tau$ and $F'$ is constructed through a ladder move from $F$, then $F' \subset F$. Especially, $F \subset F_\tau$  the Gelfand-Tsetlin face of type $\tau$. 
\end{corollary}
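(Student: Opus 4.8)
The plan is to read the inclusion off directly from the case analysis already carried out in the proof of Proposition~\ref{move-gt}, and then to iterate it along a chain of ladder moves connecting an arbitrary Kogan face of type $\tau$ to the Gelfand--Tsetlin face $F_\tau$.

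First I would revisit the two cases of a single ladder move replacing the equality $A_{j+\ell,k}$ by $A_{j,k+1}$. If the move has size $\ell>1$, the proof of Proposition~\ref{move-gt} shows that $A_{j+\ell,k}$ is already an implicit equation of $F'$ and $A_{j,k+1}$ is already an implicit equation of $F$, so in fact $F=F'$ and trivially $F'\subseteq F$. If the move has size $\ell=1$, the same proof excludes the subcase in which $x_{j,k+1}$ is not fixed in $F$ (it would force $A_{j,k}$ to be an explicit equation, contradicting that we perform a ladder move there), so one is left with the subcase where $x_{j,k+1}=m$ is fixed in $F$, and there $A_{j+1,k}$ becomes an implicit equation of $F'$. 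Now $F'$ is cut out by all explicit equations of $F$ except $A_{j+1,k}$, together with the new explicit equation $A_{j,k+1}$; since $A_{j+1,k}$ holds implicitly on $F'$, every defining equation of $F$ holds on $F'$, hence $F'\subseteq F$. Thus a single admissible ladder move never enlarges the face, which is exactly the asserted compatibility with the partial order by inclusion.

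Second I would invoke Proposition~\ref{prop-mov}: every Kogan face of type $\tau$ is obtained from $F_\tau$ by a sequence of ladder moves, each of which replaces some $A_{j+\ell,k}$ by the corresponding $A_{j,k+1}$ --- this is precisely the orientation in which $F_\tau$ is the common source. Since inclusion of faces is transitive, composing the inclusions from the previous paragraph along such a chain yields $F\subseteq F_\tau$ for every Kogan face $F$ of type $\tau$.

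I do not expect a real obstacle here: the corollary is essentially a bookkeeping consequence of the proof of Proposition~\ref{move-gt}. The only points requiring care are the orientation of the moves --- Proposition~\ref{prop-mov} must be applied in the direction starting from $F_\tau$ and replacing $A_{j+\ell,k}$ by $A_{j,k+1}$, since the inverse moves would give an increasing rather than a decreasing chain of faces --- and the small verification that in the size-$1$ case the two faces $F$ and $F'$ really can differ at most in the single equation $A_{j,k+1}$, so that the implicit-equation argument genuinely suffices to conclude $F'\subseteq F$.
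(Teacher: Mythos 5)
Your proposal is correct and follows essentially the same route as the paper, which states the corollary as an immediate consequence of the case analysis in the proof of Proposition~\ref{move-gt}: the size-$\ell>1$ case gives $F=F'$, the size-$1$ case gives $F'\subseteq F$ via the implicit equation $A_{j+1,k}$ on $F'$, and Proposition~\ref{prop-mov} (applied in the orientation that starts at $F_\tau$) lets one compose these inclusions transitively. Your remarks on the orientation of the ladder moves and on $F$ and $F'$ differing by at most the equation $A_{j,k+1}$ match the paper's intent.
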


The goal of the section was to prove the following lemma which follows immediately from Corollary~\ref{kog-ord}:
\begin{lemma}
Let $w = (s_{\ell_1 - (i-1)} \cdots s_1)\cdots( s_{\ell_i} \cdots s_i)$, then there is a unique maximal Kogan face  $F_{ww_0^{-1}}(\omega_{n+1-i}) \subset GT(\omega_{n+1-i})$ of type $ww_0^{-1}$.
\end{lemma}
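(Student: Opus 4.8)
The plan is to read off the statement as the capstone of the analysis of implicit equations carried out in this section. Write $\tau = ww_0^{-1}$. First I would note that under the map $w \mapsto ww_0^{-1}$ recorded above, the element $\tau$ acquires exactly the block form (with strictly increasing runs of simple reflections) required by the Corollary describing Gelfand-Tsetlin faces; hence the Gelfand-Tsetlin face $F_\tau$ of type $\tau$ genuinely exists as a face of $GT(\omega_{n+1-i})$, and it is the proposed $F_{ww_0^{-1}}(\omega_{n+1-i})$. Its defining explicit equations are the ones listed in that Corollary, and the implicit ones are governed by the staircase $c_k^\tau$ of \eqref{gt-kog}.

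Next I would invoke Kogan's Proposition~\ref{prop-mov}: every reduced Kogan face of type $\tau$ is obtained from $F_\tau$ by a finite sequence of ladder moves, each of which replaces an equality $A_{j+\ell,k}$ by an equality $A_{j,k+1}$. By Corollary~\ref{kog-ord} a single such move always passes to a subface, i.e.\ if $F'$ is obtained from $F$ by one move then $F' \subset F$; chaining the moves and using transitivity of inclusion, every Kogan face $F$ of type $\tau$ satisfies $F \subseteq F_\tau$. Consequently $F_\tau$ is the maximum --- not merely a maximal element --- of the poset of Kogan faces of type $\tau$ ordered by inclusion, so in particular it is the unique maximal one, and setting $F_{ww_0^{-1}}(\omega_{n+1-i}) := F_\tau$ completes the argument.

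I expect no genuine obstacle here, since the combinatorial core --- verifying that a ladder move really does produce a subface, which is precisely where the implicit equations forced by the non-regular weight $m\omega_{n+1-i}$ must be tracked against the explicit ones --- was already dispatched in Proposition~\ref{move-gt} and packaged into Corollary~\ref{kog-ord}. The only points demanding care are the bookkeeping that $\tau = ww_0^{-1}$ has the precise shape needed for $F_\tau$ to be well defined and of type $\tau$ (a direct check against the displayed form of $w$ in Proposition~\ref{prop-fav}), and the trivial observation that the identical argument applies verbatim in every dilation $GT(m\omega_{n+1-i})$, which is what will be needed when identifying $F_\tau$ with $m\mathcal{O}_{\underline{\ell}_w}$ in the second main theorem.
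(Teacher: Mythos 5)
Your proposal is correct and follows essentially the same route as the paper: the paper itself disposes of the lemma with the one-line remark that it ``follows immediately from Corollary~\ref{kog-ord},'' and your argument is precisely the unpacking of that corollary together with Proposition~\ref{prop-mov} (all Kogan faces of type $\tau$ arise from $F_\tau$ by ladder moves, and each ladder move passes to a subface, so $F_\tau$ is the maximum of the inclusion poset). The extra bookkeeping you flag --- that $\tau=ww_0^{-1}$ has the block shape needed for $F_\tau$ to be well defined, and that the argument persists in every dilation --- is appropriate diligence but not a departure from the paper's proof.
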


\begin{corollary}\label{pol-equal}
Let $w \in W^i$, then $F_{ww_0^{-1}}(\omega_{n+1-i})$ is isomorphic to the order polytope $\mathcal{O}_{\underline{\ell}_w} $. Further $F_{ww_0^{-1}}(m\omega_{n+1-i})$ is isomorphic to the $m$-th dilation of the order polytope.
\end{corollary}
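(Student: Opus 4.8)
The plan is to exhibit, for each $w\in W^i$, an explicit relabelling of coordinates identifying the polytope $F_{ww_0^{-1}}(m\omega_{n+1-i})$ with $m\mathcal{O}_{\underline{\ell}_w}$; setting $m=1$ then yields the first assertion. By the Lemma preceding this corollary together with Corollary~\ref{kog-ord}, $F_{ww_0^{-1}}(m\omega_{n+1-i})$ is the Gelfand--Tsetlin face $F_\tau$ of type $\tau=ww_0^{-1}$, and the equalities cutting it out of $GT(m\omega_{n+1-i})$ are exactly the explicit ones listed in the Corollary describing $F_\tau$ together with the implicit ones of \eqref{rec-eqn} and their consequences. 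By \eqref{gt-kog} and the analysis of the $m$-staircase and the forced-to-$0$ triangle preceding Proposition~\ref{move-gt}, the set $\mathcal{F}$ of coordinates $x_{k,j}$ on which $F_\tau$ is \emph{not} constrained by any of these equalities is known explicitly, and a direct count gives $|\mathcal{F}|=N=\sum_{k=1}^i(\ell_k-i+1)=|\mathcal{P}_{\underline{\ell}_w}|$.

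First I would define a bijection $\psi\colon\mathcal{F}\to\mathcal{P}_{\underline{\ell}_w}$. In the Gelfand--Tsetlin triangle the free coordinates $\mathcal{F}$ occupy, by \eqref{gt-kog}, a region obtained from the defining rectangle of $\mathcal{P}_{\underline{\ell}_w}$ by the order-reversal inherent in the dualized weight convention of this section; I would write $\psi$ down and verify that it is a bijection and that it carries the two families of Gelfand--Tsetlin cover relations $x_{k-1,j}\ge x_{k,j}$ and $x_{k,j}\ge x_{k-1,j+1}$, restricted to pairs of free coordinates, onto the two families of cover relations of $\mathcal{P}_{\underline{\ell}_w}$ (namely $x_{a,b}$ covers $x_{a+1,b}$, and $x_{a,b+1}$ covers $x_{a,b}$). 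Since the order on $\mathcal{P}_{\underline{\ell}_w}$ (Proposition~\ref{prop-pos}) is the transitive closure of these covers, this is all that is needed on the combinatorial side.

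Next I would eliminate the constrained coordinates. Using the explicit equalities $A_{\ell_k-i+k,1},\dots,A_{\ell_k-i+k,\,n-\ell_k}$ of $F_\tau$ and the implicit equalities \eqref{rec-eqn}, I would show that on $F_\tau$ every constrained coordinate equals either $m$, or $0$, or a single free coordinate --- the one at the bottom of its Gelfand--Tsetlin column, into which the chain of equalities $x_{k,j}=x_{k+1,j}=\cdots$ carries it. Substituting these identities into the defining inequalities $x_{k-1,j}\ge x_{k,j}\ge x_{k-1,j+1}$ of $GT(m\omega_{n+1-i})$, and reading the result through $\psi$ in the variables $(s_{a,b})_{x_{a,b}\in\mathcal{P}_{\underline{\ell}_w}}$, I expect to obtain exactly the system $0\le s_{a,b}\le m$, together with $s_{a_1,b_1}\ge s_{a_2,b_2}$ whenever $x_{a_1,b_1}>x_{a_2,b_2}$ in $\mathcal{P}_{\underline{\ell}_w}$: a cover relation running from a free coordinate up into the $m$-staircase contributes $s_{a,b}\le m$, one running down into the forced-to-$0$ triangle contributes $s_{a,b}\ge 0$, and each cover between free coordinates contributes, by the previous step, one of the cover relations of $\mathcal{P}_{\underline{\ell}_w}$. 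This gives $F_\tau=m\mathcal{O}_{\underline{\ell}_w}$; since every equality and every substitution above is linear and homogeneous in the value $m=x_{0,1}$ carried down from the top row, the dilation statement comes for free and the case $m=1$ gives $F_{ww_0^{-1}}(\omega_{n+1-i})\cong\mathcal{O}_{\underline{\ell}_w}$.

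The only real obstacle is combinatorial bookkeeping: pinning down precisely the boundary of the free region in the Gelfand--Tsetlin triangle (matching \eqref{gt-kog}, the $m$-staircase, and the forced-to-$0$ triangle), checking that none of the collapsing identifications merges two free coordinates, and verifying that the substitution produces no inequality strictly stronger than those of $m\mathcal{O}_{\underline{\ell}_w}$ and omits none. Conceptually this is just the assertion that contracting the Gelfand--Tsetlin (marked order) polytope along a set of cover-equalities and then discarding the forced coordinates yields the order polytope of the resulting quotient poset, and that here the quotient poset is $\mathcal{P}_{\underline{\ell}_w}$; all the work is in making the quotient explicit. It is cleanest to observe that after the contraction the forced-to-$m$ coordinates form an order filter of the quotient poset and the forced-to-$0$ coordinates an order ideal, so that they can only ever impose the bounds $s\le m$ and $s\ge 0$ and never a spurious relation between two free coordinates --- which is precisely why the quotient of the marked order polytope is again an (unmarked) order polytope.
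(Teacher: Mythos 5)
Your proposal is correct and follows essentially the same route as the paper's (very terse) proof: both use \eqref{gt-kog} to locate the free coordinates of the face, observe they occupy exactly the shape of $\mathcal{P}_{\underline{\ell}_w}$, and then read off that the Gelfand--Tsetlin inequalities among free coordinates reproduce the cover relations of the poset while those against forced-to-$m$ or forced-to-$0$ coordinates give the bounds $0\le s\le m$, with the dilation statement following from linearity in $m$. You have simply filled in the bookkeeping the paper leaves implicit (the explicit bijection $\psi$, the count $|\mathcal{F}|=N$, and the filter/ideal observation explaining why no spurious relations arise), which are good sanity checks but not a different method.
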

\begin{proof}
The proof is straightforward. We see that by \eqref{gt-kog} the \textit{free} variables in $F_{ww_0^{-1}}(\omega_{n+1-i})$ form a pattern inside a rectangle of width $i$ and height $n-i$, and the heights of the columns are $\ell_k - (k-1)$. So the pattern is the same as the pattern of the poset $\mathcal{P}_{\underline{\ell}_w}$ (see \eqref{gt-kog}). The inequalities defining the Gelfand-Tsetlin polytope are obviously the cover relation of the poset $\mathcal{P}_{\underline{\ell}_w}$. Which implies that $F_{ww_0^{-1}}(\omega_{n+1-i})$ is isomorphic to the order polytope $\mathcal{O}_{\underline{\ell}_w}$. The statement about the $m$-dilation follows straight from this.
\end{proof}


\subsection{Kogan face and Demazure character}
We finish the section as promised, showing that the character of the maximal Kogan face is the character of a corresponding Demazure module.

\begin{lemma}
Fix $w \in W^i$ and consider the maximal Kogan face $F_{ww_0^{-1}}(m\omega_{n+1-i}) \subset GT(m\omega_{n+1-i})$, then
\[
\operatorname{char } V_w(m\omega_i) = \operatorname{char } F_{ww_0^{-1}}(m\omega_{n+1-i}).
\]
\end{lemma}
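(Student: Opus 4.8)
The plan is to prove the lemma by comparing two generating functions over lattice points: the one that computes $\operatorname{char}V_w(m\om_i)$ from the chain polytope, and the one that \emph{defines} $\operatorname{char}F_{ww_0^{-1}}(m\om_{n+1-i})$ from the order polytope, bridging them by Stanley's transfer map. First I would use Theorem~\ref{main-thm} (together with the character corollary that follows it), which gives
\[
\operatorname{char}V_w(m\om_i)=e^{w(m\om_i)}\sum_{\bs\in m\mathcal{C}_{\underline{\ell}_w}\cap\mathbb{Z}^N}e^{-w(\operatorname{wt}(\bs))},\qquad \operatorname{wt}(\bs)=\sum_{\alpha\in R^-_w}s_\alpha\,\alpha.
\]
On the other hand, by Corollary~\ref{pol-equal} the face $F_{ww_0^{-1}}(m\om_{n+1-i})$ is, as a lattice polytope, the $m$-th dilation $m\mathcal{O}_{\underline{\ell}_w}$; its lattice points are the order-preserving labellings $\bt$ of $\mathcal{P}_{\underline{\ell}_w}$ by integers in $[0,m]$, each completed to a Gelfand--Tsetlin pattern by the fixed entries recorded in \eqref{gt-kog}, and $\operatorname{char}F_{ww_0^{-1}}(m\om_{n+1-i})=\sum_{\bt}e^{\operatorname{wt}\bt}$. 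Hence the lemma reduces to exhibiting a bijection $\varphi\colon m\mathcal{O}_{\underline{\ell}_w}\cap\mathbb{Z}^N\to m\mathcal{C}_{\underline{\ell}_w}\cap\mathbb{Z}^N$ such that $\operatorname{wt}(\bt)=w(m\om_i)-w\bigl(\operatorname{wt}(\varphi(\bt))\bigr)$ for every $\bt$.

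For $\varphi$ I would take Stanley's transfer map, $\varphi(\bt)_a=\bt_a-\max\{\bt_b: b\lessdot a\}$ (empty maximum $=0$), which by \cite{Sta86} restricts to a bijection of lattice points in every dilation; the content is then the weight identity. I would verify it by evaluating $\operatorname{wt}(\bt)$ directly on the face: by \eqref{gt-kog} and the discussion of implicit equations, the Gelfand--Tsetlin pattern underlying a point $\bt$ of $F_{ww_0^{-1}}(m\om_{n+1-i})$ has its entries equal to $m$ on the staircase of sizes $c^\tau_k$, equal to $0$ above and to the right of it, and free precisely on the rectangular shape of $\mathcal{P}_{\underline{\ell}_w}$. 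In the telescoping expression $\operatorname{wt}(\bt)=\sum_k\bigl(r_{k+1}(\bt)-r_k(\bt)\bigr)\epsilon_{k+1}$, each row sum $r_k(\bt)$ is a fixed integer (coming from the staircase) plus the sum of the free coordinates in that row, so the differences of consecutive row sums are exactly the numbers $\varphi(\bt)_a$ read off along the cover relations of $\mathcal{P}_{\underline{\ell}_w}\cong R^-_w$ (Proposition~\ref{prop-pos}), up to a fixed correction. Writing $\alpha_{p,q}=\epsilon_p-\epsilon_{q+1}$ and $w(\alpha_{p,q})=\epsilon_{w(p)}-\epsilon_{w(q+1)}$, and using the reduced decomposition of $w$ from Proposition~\ref{prop-fav}, one checks that assembling these contributions with the correct $\epsilon_k$-indices produces exactly $w(m\om_i)-\sum_{\alpha\in R^-_w}\varphi(\bt)_\alpha\,w(\alpha)=w(m\om_i)-w(\operatorname{wt}(\varphi(\bt)))$, where the dualization $\sigma$ built into the definition of $\operatorname{wt}$ (cf.\ \eqref{dual}) and the value of the staircase entries account for the shift by $w(m\om_i)$; summing over $\bt$ finishes the proof.

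The one genuinely delicate step is this last weight bookkeeping: it is a finite computation, but it requires matching, entry by entry, the $\epsilon_k$-coefficients produced by the telescoping Gelfand--Tsetlin weight against those produced by $w$ acting on the roots in $R^-_w$, with the staircase of fixed $m$'s and the automorphism $\sigma$ feeding the constant term $w(m\om_i)$. I expect the cleanest route is to carry out the identity first for $m=1$ -- where $\bt$ ranges over the order filters of $\mathcal{P}_{\underline{\ell}_w}$ and $\varphi(\bt)$ over the corresponding antichains -- and then obtain general $m$ either by the same telescoping or by the Minkowski additivity of the lattice points of $\mathcal{O}_{\underline{\ell}_w}$ and $\mathcal{C}_{\underline{\ell}_w}$ (Lemmas~\ref{lem-mink2} and~\ref{lem-mink}). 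Alternatively, since Corollary~\ref{kog-ord} shows that in our rectangular situation every Kogan face of type $\tau=ww_0^{-1}$ is contained in the single maximal face $F_\tau$, the union of all Kogan faces of type $\tau$ is just $F_\tau$, so the statement also follows from Kogan's description of Demazure characters (\cite{Kog00,KST12}) -- provided one first upgrades it from regular to rectangular highest weights, which amounts to essentially the same computation.
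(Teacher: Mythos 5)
Your route differs substantially from the paper's. The paper never attempts a direct weight computation on the Gelfand--Tsetlin pattern; instead it identifies each lattice point $\bs$ of $GT(m\om_{n+1-i})$ with a string of Kashiwara operators $A_{\bs}$ applied to the highest weight tableau $b_{m\om_i}$ (a standard weight-preserving bijection to semistandard tableaux), observes that the implicit and explicit equations cutting out the face $F_{ww_0^{-1}}$ force $A_{\bs}$ to collapse into a string compatible with the reduced word of $w$ from Proposition~\ref{prop-fav}, and then invokes the revised Demazure character formula of Littelmann--Kashiwara to see that $\{A_{\bs}.b_{m\om_i}\,:\,\bs\in F_{ww_0^{-1}}\}\subset T_w(m\om_i)$. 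Equality of the two sets is then obtained not by matching weights but by counting: $|F_{ww_0^{-1}}(m\om_{n+1-i})\cap\mathbb{Z}^N|=|m\mathcal{O}_{\underline{\ell}_w}\cap\mathbb{Z}^N|=|m\mathcal{C}_{\underline{\ell}_w}\cap\mathbb{Z}^N|=\dim V_w(m\om_i)=|T_w(m\om_i)|$, using Stanley's Ehrhart equality and Theorem~\ref{main-thm}. What the crystal route buys is precisely what your approach has to pay for by hand: the weight-preservation comes for free from the crystal structure, so no bookkeeping of $\epsilon$-coefficients is needed.

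The gap in your proposal is exactly the step you flag yourself: the claimed identity $\operatorname{wt}(\bt)=w(m\om_i)-w(\operatorname{wt}(\varphi(\bt)))$ for Stanley's transfer map is asserted (``one checks'') but not established, and it is the entire content of the lemma once the polytope identification of Corollary~\ref{pol-equal} is in place. That identity is plausible and works in small cases, but it is not a formal consequence of anything you cite; it needs an actual argument tracking how the staircase of fixed entries from \eqref{gt-kog} and the reduced word of Proposition~\ref{prop-fav} conspire to produce $w(m\om_i)$ as the constant term. Also, your fallback of reducing to $m=1$ via Minkowski additivity of lattice points (Lemmas~\ref{lem-mink} and~\ref{lem-mink2}) does not close this on its own: knowing that lattice points decompose as Minkowski sums does not give you that the transfer map, or the GT weight, is compatible with a chosen decomposition, so you cannot simply induct on $m$ without an extra compatibility argument. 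The telescoping route for general $m$ is the honest one, and it is precisely the computation left undone. Finally, your last alternative (deducing the statement from \cite{Kog00,KST12}) has the same status as the paper's own remark following the lemma: it would require extending their result from regular to rectangular highest weights, which is not available off the shelf.
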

\begin{proof}
Let us recall the well-known identification of lattice points in the Gelfand-Tsetlin polytope with semi--standard Young Tableaux (see for example \cite{DLM04}). Due to our definition of the weight of a Gelfand-Tsetlin pattern, we identify the lattice points in $GT(m\omega_{n+1-i})$ with semi--standard Young Tableaux of shape $m\omega_i$ (e.g. $m$ columns of height $i$).\\
Let $\bs \in GT(m\omega_{n+1-i})$, then we associate the following product of Kashiwara operators
\[
A_{\bs} := \left(f_{n-(i-1)}^{m - \bs_{n+1-i,1}} \cdots f_1^{m - \bs_{1,n+1-i}}\right)\cdots\left(f_{n-k}^{m-\bs_{n-k,1}} \cdots f_{i-k}^{m-\bs_{i-k,n+1-i}}\right)\cdots\left (f_n^{m-\bs_{n,1}} \cdots f_i^{m-\bs_{i,n+1-i}}\right).
\]
The map $\bs \mapsto A_{\bs}.b_{m \omega_i}$, where $b_{m \omega_i}$ is the Kashiwara-Nakashima tableaux of weight $m \omega_i$, is the well-known weight preserving bijection between lattice points in $GT(m \omega_{n+1-i})$ and semi--standard Young Tableaux of shape $m \omega_i$.\\
Let $\bs \in F_{ww_0^{-1}}(m \omega_{n+1-i})$, and consider the $k$-th factor of $A_{\bs}$, $A_{\bs,k} = f_{n-k}^{m-\bs_{n-k,1}} \cdots f_{i-k}^{m-\bs_{i-k,n+1-i}}$. We have seen that we have certain implicit and explicit equations in the face, setting some of the $\bs_{k,j} = m$. Then by \eqref{gt-kog}: 
\begin{eqnarray}\label{red-A}
A_{\bs,k} = f_{\ell_{i-k} -k}^{m-\bs_{\ell_{i-k} -k,1}} \cdots f_{i-k}^{m-\bs_{i-k,n+1-i}}.
\end{eqnarray}
Recall the revised Demazure character formula (see \cite{Lit95, Kas93}), for given $\tau = s_{i_1} \cdots s_{i_r}$ and $\lambda \in P^+$:
\begin{eqnarray}\label{rev-D}
\operatorname{char } V_\tau(\lambda) = \sum_{b \in T_\tau(\lambda)} e^{\operatorname{wt } b}
\end{eqnarray}
where 
\[
T_\tau(\lambda) = \{ f_{i_1}^{t_1} \cdots f_{i_s}^{t_s}.b_\lambda \mid t_j \geq 0 \}.
\]
So the character of the Demazure module is equal to the character of the Demazure crystal $T_\tau(\lambda)$. \\
Recall that $w = (s_{\ell_1 - (i-1)} \cdots s_1)\cdots( s_{\ell_i} \cdots s_i)$ and so we see from \eqref{red-A} immediately that for any lattice point $\bs \in F_{ww_0^{-1}}(m \omega_{n+1-i})$:
\[
A_{\bs}.b_{m \omega_i} \in T_w(m\omega_i).
\] 
Now, by Corollary~\ref{pol-equal}, we have $F_{ww_0^{-1}}(m \omega_{n+1-i}) = m\mathcal{O}_{\underline{\ell}_w}$, and especially the number of lattice points is equal. On the other hand, the number of lattice points in $m\mathcal{O}_{\underline{\ell}_w}$ is equal to the number of lattice points in $\mathcal{C}_{\underline{\ell}_w} $ (Theorem~\ref{sta-1}). Again, this is equal to the dimension of $V_w(\omega_i)$ (Theorem~\ref{main-thm}), which is nothing but $|T_w(m\omega_i)|$ (by \eqref{rev-D}). But this implies that
\[
\{ A_{\bs}.b_{m \omega_i} \mid \bs \in F_{ww_0^{-1}}(m \omega_{n+1-i}) \} = T_w(m\omega_i),
\]
and this finishes the proof.
\end{proof}

\begin{remark}
This has been shown for regular $\lambda$ in \cite{KST12}, using an interpretation of Demazure operators in terms of Kogan faces. We are convinced that their methods do apply also in the case of rectangular weights.
\end{remark}


\section{Degenerations}\label{Section5}
\subsection{Demazure modules are favourable modules}
In \cite{FFoL13a}, the notion of favourable modules has been introduced, let us recall this here briefly and show that the Demazure modules $V_w(m \omega_i)$ are favourable $\lie n^+$-modules.\\
We denote the group of upper triangular matrices with determinant equals to $1$ by $\mathbb{U}$, then $\lie n^+$ is the corresponding Lie algebra.  Let $M$ an $\mathbb{U}$-module such that there exists $m \in M$ with $M = U(\lie n^+).m$. Let $\{x_1, \ldots x_N\}$ be an ordered basis of $\lie n^+$ and we fix an induced homogeneous lexicographic order on monomials in $U(\lie n^+)$. This induces a filtration on $M$
\[
M^{\bp} := \langle f^{\bq}.m \, | \, \bq \preceq \bp \rangle_\bc
\]
where in the associated graded module any graded component is at most one-dimensional. We say $\bp$ is essential if $M^{\bp}/M^{ < \bp}$ is non-zero and denote the set of essential monomials $es (M)$. We can view this as a subset in $\mathbb{Z}_{\geq 0}^N$.\\
$M$ is called a favourable $\mathbb{U}$-module if there exists a convex polytope $P(M) \subset \mathbb{Z}^N$ whose set of lattice points $S(M)$ is equal to $es(M)$ and 
\[
|n S(M) | = \operatorname{dim } U(\lie n^+). m^{\otimes n} \subset M^{\otimes n}.
\]
\begin{lemma}\label{lem-fav}
Let $w \in W$, $\lambda = m \omega_i$, then $V_w(m \omega_i)$ is a favourable $\mathbb{U}$-module via the polytope $\mathcal{C}_{\underline{\ell}_w}$.
\end{lemma}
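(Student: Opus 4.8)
The plan is to verify the three defining properties of a favourable module for $M = V_w(m\omega_i)$ directly, using the results already established in Sections~\ref{Section1}--\ref{Section3}. Recall that favourability requires: (i) the set of essential monomials $es(M)$ with respect to the homogeneous lexicographic order on $U(\lie n^+)$ equals the set of lattice points of a convex polytope; (ii) that polytope is normal; and (iii) the number of lattice points in its $n$-th dilate equals $\dim U(\lie n^+).m^{\otimes n} \subset M^{\otimes n}$. Since $V_w(m\omega_i) = w\bigl(U(\lie n^-_w).v_{m\omega_i}\bigr)w^{-1}$, conjugation by $w$ translates the order on $U(\lie n^+)$ into an order on $U(\lie n^-_w)$, and the candidate polytope will be $\mathcal{C}_{\underline{\ell}_w}$ via the identification $R^-_w \cong \mathcal{P}_{\underline{\ell}_w}$ of Proposition~\ref{prop-pos}.

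First I would identify the essential set. The point is that the PBW filtration is coarser than, but compatible with, the lexicographic filtration, so the associated graded spaces have the same dimension in each total degree; combined with Theorem~\ref{main-thm}, which says $\{f^{\bs}.v_\lambda \mid \bs \in m\mathcal{C}_{\underline{\ell}_w}\cap \Z^N\}$ is a basis of the PBW-graded module, this forces $es(M) = m\mathcal{C}_{\underline{\ell}_w}\cap\Z^N$. Concretely, the straightening relations of Lemma~\ref{lem-straight} are \emph{lexicographically} leading-term relations (the error terms are $\prec \bs$ in exactly the order \eqref{tot-ord} that is used to set up the lexicographic filtration), so any monomial supported on a chain with total exponent exceeding $m$ is non-essential, giving $es(M) \subseteq m\mathcal{C}_{\underline{\ell}_w}\cap\Z^N$; the reverse inclusion follows from linear independence (Proposition~\ref{prop-lin}) together with a dimension count. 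Here one must be slightly careful: the essential monomials of $V_w(m\omega_i)$ as a $\mathbb{U}$-module are monomials in $e_{w(\alpha)}$, so one invokes the identification $V_w(\lambda)^a$-basis $\{\prod e_{w(\alpha)}^{s_\alpha}.v_m\}$ from Theorem~\ref{main-thm}(2) and checks the order $\prec$ on $R^-_w$ matches the chosen order $\prec$ on positive roots after conjugation — this is where invoking Proposition~\ref{prop-fav}'s reduced decomposition is convenient.

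Next, normality of $\mathcal{C}_{\underline{\ell}_w}$ is exactly Corollary~\ref{cor-nor}, so (ii) is immediate. For (iii), the Cartan-component condition, I would argue as in \cite{FFoL13} (cited just above in the second proof of Proposition~\ref{prop-lin}): the tensor-product statement there says that if $S_w(\lambda)$ and $S_w(\mu)$ parametrize linearly independent subsets of the PBW-graded modules then $S_w(\lambda)+S_w(\mu)$ parametrizes a linearly independent subset of $(U(\lie n^-_w).(v_\lambda\otimes v_\mu))^a$; applied iteratively with $\lambda=\mu=m\omega_i$ and combined with the Minkowski-sum identity of Lemma~\ref{lem-mink}, this yields that $n(m\mathcal{C}_{\underline{\ell}_w})\cap\Z^N = nm\mathcal{C}_{\underline{\ell}_w}\cap\Z^N$ parametrizes a basis of the Cartan component $U(\lie n^+).m^{\otimes n}$, and its cardinality is $\dim$ of that component by the spanning statement of Corollary~\ref{spa-set}. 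Assembling (i)--(iii) gives favourability with $P(M) = m\mathcal{C}_{\underline{\ell}_w}$ (equivalently $\mathcal{C}_{\underline{\ell}_w}$ with the $m$-dilation built in).

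The main obstacle I anticipate is matching the two filtrations precisely — showing that the lexicographic essential set genuinely coincides with the PBW/chain-polytope set rather than being strictly smaller. This hinges on the relations produced by the operator $A$ in Lemma~\ref{lem-straight} having their leading terms in the lex order equal to $f^{\bs}$; one needs that the total order \eqref{tot-ord} on $\mathcal{P}_{\underline{\ell}_w}$ refines the partial order on roots and that the differential operators $\partial_{\gamma_i}$ strictly lower monomials in this order, so that the induced lexicographic leading term of $A.f_{\beta_0}^{|\bs|}.v_\lambda$ is $f^{\bs}.v_\lambda$. Granting that (which is the content of \cite{BD14} in the non-Demazure case and transfers verbatim to the subposet $R^-_w$ by Corollary~\ref{coro-ell}), the rest is bookkeeping with results already in hand.
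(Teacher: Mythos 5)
Your proposal is correct and follows essentially the same route as the paper: transfer the total order \eqref{tot-ord} to $R^+$ via conjugation by $w$, read off the essential monomials from the fact that the straightening relations of Lemma~\ref{lem-straight} are lexicographically leading-term relations (so the $\mathcal{C}_{\underline{\ell}_w}$-basis of $V_w(m\omega_i)^a$ from Theorem~\ref{main-thm} is also a basis of $V_w(m\omega_i)^t$), then invoke normality from Corollary~\ref{cor-nor} together with the Minkowski-sum property and the Cartan-component isomorphism $V_w((m+n)\omega_i)\cong U(\lie n^+).(v_{w(m\omega_i)}\otimes v_{w(n\omega_i)})$. The paper's proof is terser on the two points you rightly flag as the delicate ones (that the lex essential set is not strictly smaller than the chain-polytope lattice, and that the tensor condition follows from normality), but the content matches.
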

\begin{proof}
Recall the total order $\preceq$ on $R^-_w$ \eqref{tot-ord}. By applying $w$ to the roots in $-R^-_w$ and extending then arbitrary to all other positive roots we obtain a total order on $R^+$. Then we have seen Theorem~~\ref{main-thm} that
\[
\{ \prod_{\alpha _\in R^-_w} e_{w(\alpha)}^{s_\alpha}.v_\lambda \, | \, \mathbf{s} \in  \mathcal{C}^m_{\underline{\ell}_w} \cap \mathbb{Z}^{N}\}
\]
is a basis of the associated graded space $V_w(m \omega_i)^a$ and by the straightening law Lemma~\ref{lem-straight}, we see that this is actually a basis of $V_w(m \omega_i)^t$ and so the lattice points in $\mathcal{C}_{\underline{\ell_w}}$ are the essential monomials. Since $\mathcal{C}_{\underline{\ell_w}}$ is normal and 
\[
V_w((m+n) \omega_i) \cong U(\lie n^+).v_{w(m \omega_i)} \otimes v_{w(n \omega_i)} \subset V_w(m \omega_i) \otimes V_w(n \omega_i)
\]
we see that also the second condition for a favourable module is satisfied. 
\end{proof}

Recall, that $M^a$ denotes the associated PBW-graded module of $M$, and we denote $M^t$ the associated graded module of $M$ with respect to the filtration induced by $\preceq$. Then $M^a, M^t$ are both modules for $\mathbb{U}^a$, which is the algebraic group of $N$-copies of the abelian group, as well as cyclic modules for $S(\lie n^+)$, the symmetric algebra of the vector space $\lie n^+$. We denote 
\[
X_w := \overline{\mathbb{U}.[v_{m \omega_i}]} \subset \mathbb{P}(V_w(m\omega_i)) \subset \mathbb{P}(V(m \omega_i)) = \operatorname{Gr}(i,n+1)
\]
the minuscule Schubert variety associated to $\omega_i$ and $w$, and the degenerated Schubert varieties
\[
X_w^a := \overline{\mathbb{U}^a.[v_{m \omega_i}]} \subset \mathbb{P}(V_w(m\omega_i)^a)  \; ; \; X_w^t := \overline{\mathbb{U}^a.[v_{m \omega_i}]} \subset \mathbb{P}(V_w(m\omega_i)^t).
\]

With Lemma~\ref{lem-fav} we can use the main theorem in \cite[Main Theorem]{FFoL13a}
 to deduce
\begin{theorem} Let $w \in W$, $\lambda = m \omega_i$, then
\begin{enumerate}
\item $X_w^t$ is a toric variety.
\item $X_w^t$ is a flat degeneration of $X_w^a$ and both are flat degenerations of $X_w$.
\item $X_w^t, X_w^a$ are both projectively normal and arithmetically Cohen-Macaulay varieties.
\item The polytope $\mathcal{C}_{\underline{\ell}_w}$ is the Newton-Okounkov body for $X_w$ and its abelianized version.
\end{enumerate}
\end{theorem}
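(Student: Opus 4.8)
The plan is to obtain all four assertions as direct consequences of the Main Theorem of \cite{FFoL13a}, applied to the module $M = V_w(m \omega_i)$. Everything needed is already assembled: Lemma~\ref{lem-fav} shows that $M$ is a favourable $\mathbb{U}$-module with associated polytope $\mathcal{C}_{\underline{\ell}_w}$, and Corollary~\ref{cor-nor} shows that this polytope is normal. The first step is simply to match notation: the cyclic vector is $v_{m\omega_i}$, the group is $\mathbb{U}$ with abelianization $\mathbb{U}^a$, $X_w = \overline{\mathbb{U}.[v_{m\omega_i}]}$ is the minuscule Schubert variety, and $X_w^a$, $X_w^t$ are the varieties attached respectively to the PBW filtration and to the filtration induced by the total order $\preceq$. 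With this dictionary, statements (1), (2), (3) are verbatim the output of \cite[Main Theorem]{FFoL13a}.

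For statement (4), the point is that the $\preceq$-filtration on the homogeneous coordinate ring of $X_w$ yields a valuation with values in $\mathbb{Z}_{\geq 0}^{N}$ (lexicographically ordered), and that by Lemma~\ref{lem-fav} its set of essential monomials is precisely the set of lattice points of the normal polytope $\mathcal{C}_{\underline{\ell}_w}$. Normality of $\mathcal{C}_{\underline{\ell}_w}$ guarantees that the value semigroup is generated in degree one, so the Newton--Okounkov body of this valuation equals $\mathcal{C}_{\underline{\ell}_w}$; restricting the same valuation to $X_w^a$ gives the abelianized version. Again this is exactly how \cite{FFoL13a} packages the consequences of favourability, so no independent computation is needed.

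The only thing requiring any care --- and thus the (mild) main obstacle --- is to confirm that the hypotheses of \cite[Main Theorem]{FFoL13a} are met literally in our situation: that $M$ is a cyclic $U(\lie n^+)$-module (immediate from $V_w(\lambda) = U(\lie b).v_{w(\lambda)}$ and the fact that $\lie h$ acts by scalars on the weight vector $v_{w(\lambda)}$, so that $V_w(\lambda) = U(\lie n^+).v_{w(\lambda)}$), that $\mathcal{C}_{\underline{\ell}_w}$ has the expected full dimension $N = \operatorname{length}(w)$ so that ``normal'' in the sense used above agrees with the normality hypothesis there, and that the flatness of the degenerations is read off from the equality of Hilbert functions --- which is in turn supplied by the dimension count in Theorem~\ref{main-thm} together with Stanley's identity $\mathcal{O}_{\underline{\ell}}(t) = \mathcal{C}_{\underline{\ell}}(t)$. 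These are bookkeeping checks rather than genuine difficulties.
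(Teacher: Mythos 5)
Your proposal is correct and follows the paper's own approach exactly: both simply invoke the Main Theorem of \cite{FFoL13a}, with Lemma~\ref{lem-fav} (favourability of $V_w(m\omega_i)$ via $\mathcal{C}_{\underline{\ell}_w}$) supplying the needed hypothesis. The extra hypothesis-checking you sketch (cyclicity of $V_w(\lambda)$ over $U(\lie n^+)$, normality from Corollary~\ref{cor-nor}) is sound, though the paper leaves it implicit.
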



\subsection{Comparison with Gelfand-Tsetlin degenerations}\label{compare-pol}
Let $\lambda = m \omega_i$, $w \in W^i$, $\underline{\ell}_{w} = (\ell_1 \leq \ldots \leq \ell_i)$ be the corresponding sequence \eqref{seq-def}. We want to compare the toric degenerations obtained through the Gelfand-Tsetlin polytope (here $\mathcal{O}_{\underline{\ell}_w}$) and our polytope obtained via the PBW-grading ($\mathcal{C}_{\underline{\ell}_w}$). We can apply Theorem~\ref{thm-equiv} to $\mathcal{P}_{\underline{\ell}_w}$.
\begin{lemma}\label{lem-compare} Let $w \in W^i$ and $\underline{\ell}_w$ the associated sequence. The toric variety obtained via the Gelfand-Tsetlin polytope is isomorphic to $X_w^t$ if and only if $\ell_{i-2}  <  i+1$ or $\ell_{i-1} < i+2$. 
\end{lemma}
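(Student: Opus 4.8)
The plan is to reduce the statement to the combinatorial criterion already recorded in Theorem~\ref{thm-equiv} and the Remark following it, and then to identify the two toric varieties with the toric varieties of the chain and order polytopes respectively. First I would recall that $X_w^t$ is, by the Theorem in Section~\ref{Section5} (part (1)), the toric variety associated to the polytope $\mathcal{C}_{\underline{\ell}_w}$; this follows from Lemma~\ref{lem-fav}, which shows $V_w(m\omega_i)$ is a favourable module via $\mathcal{C}_{\underline{\ell}_w}$, together with the main theorem of \cite{FFoL13a}. On the other side, by \cite{GL96} the Gelfand-Tsetlin degeneration of the Schubert variety $X_w$ is the toric variety of the corresponding face of $GT(m\omega_{n+1-i})$, and by Corollary~\ref{pol-equal} this face is exactly $m\mathcal{O}_{\underline{\ell}_w}$, the $m$-th dilation of the order polytope $\mathcal{O}_{\underline{\ell}_w}$. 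Since a toric variety depends only on the polytope up to unimodular equivalence (which preserves the associated projective toric variety and its embedding), the two toric degenerations are isomorphic as toric varieties if and only if $\mathcal{C}_{\underline{\ell}_w}$ and $\mathcal{O}_{\underline{\ell}_w}$ are unimodularly equivalent.

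Next I would invoke Theorem~\ref{thm-equiv} (due to \cite{HL12}): for the poset $\mathcal{P}_{\underline{\ell}_w}$, the polytopes $\mathcal{O}_{\underline{\ell}_w}$ and $\mathcal{C}_{\underline{\ell}_w}$ are unimodularly equivalent if and only if $\mathcal{P}_{\underline{\ell}_w}$ contains no subposet on five elements $\{x_1,x_2,x_3,x_4,x_5\}$ with $x_1,x_2 > x_3 > x_4,x_5$. By the Remark immediately following that theorem, in our rectangular situation this ``no forbidden subposet'' condition translates precisely into the numerical condition $\ell_{i-2} < i+1$ or $\ell_{i-1} < i+2$. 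Combining this with the previous paragraph yields the claimed equivalence. Strictly speaking I should also verify the translation of the Remark, i.e. that $\mathcal{P}_{\underline{\ell}}$ contains the forbidden five-element subposet if and only if $\ell_{i-1} \geq i+2$ and $\ell_{i-2} \geq i+1$: the element $x_3$ must have two distinct elements above it (so $x_3 = x_{k,j}$ with the column to its left present, forcing $k \geq 2$ and $j \geq i+1$ after suitable indexing) and two distinct elements below it (forcing another column present to the right and another row below), and one checks that the minimal such configuration occurs exactly in the last two columns with the indicated bounds on $\ell_{i-1}$ and $\ell_{i-2}$; this is a short direct inspection of the rectangular shape of $\mathcal{P}_{\underline{\ell}_w}$ described in Section~\ref{Section1}.

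The main obstacle I anticipate is not the combinatorics — that is essentially bookkeeping against the shape of $\mathcal{P}_{\underline{\ell}_w}$ — but making precise the claim that ``isomorphic as toric varieties'' is controlled exactly by unimodular equivalence of the two polytopes, and in particular that the Gelfand-Tsetlin toric variety of $X_w$ really is the toric variety of the face $m\mathcal{O}_{\underline{\ell}_w}$ rather than of some larger ambient Gelfand-Tsetlin polytope. Here one uses that the Gelfand-Tsetlin degeneration of a Schubert variety is itself toric and is cut out inside the full Gelfand-Tsetlin toric variety by the vanishing coordinates corresponding to the implicit and explicit equations of the relevant Kogan face (Section~\ref{Section4}), so that it is the projective toric variety of that face; normality of $\mathcal{O}_{\underline{\ell}_w}$ (Corollary~\ref{cor-nor}) guarantees the polytope, not merely its lattice points, determines the variety. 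Once these identifications are in place the lemma is immediate.
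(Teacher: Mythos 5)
Your proposal follows exactly the route the paper takes: identify $X_w^t$ with the toric variety of $\mathcal{C}_{\underline{\ell}_w}$ via Lemma~\ref{lem-fav} and the favourable-module machinery of \cite{FFoL13a}, identify the Gelfand--Tsetlin degeneration with the toric variety of the face $m\mathcal{O}_{\underline{\ell}_w}$ via Corollary~\ref{pol-equal}, and then apply the equivalence from \cite{HL12} recorded as Theorem~\ref{thm-equiv} together with the Remark that translates the forbidden five-element subposet condition into $\ell_{i-2}<i+1$ or $\ell_{i-1}<i+2$. In fact the paper itself gives no proof at all for this lemma beyond the line ``We can apply Theorem~\ref{thm-equiv} to $\mathcal{P}_{\underline{\ell}_w}$,'' so your write-up is rather more explicit than the source.

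One place where both you and the paper are a little loose, and which is worth flagging: the sentence ``isomorphic as toric varieties if and only if the polytopes are unimodularly equivalent'' only obviously gives the ``if'' direction. For the ``only if'' direction you should invoke the full strength of Theorem~\ref{thm-equiv}: when the unimodular-equivalence condition fails, equivalently the $\mathbf{f}$-vectors of $\mathcal{O}_{\underline{\ell}_w}$ and $\mathcal{C}_{\underline{\ell}_w}$ differ, hence their face lattices and therefore their normal fans are non-isomorphic, and so the associated projective toric varieties (with their torus actions) cannot be isomorphic. With that one sentence added, your argument is complete and matches the paper's (implicit) reasoning.
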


We can reformulate the conditions of the lemma. The normal fans are non-isomorphic if and only if there is a reduced decomposition of $w$ of the form
\[
w = \ldots (s_{i-1} s_{i-2})(s_{i+1}s_i s_{i-1})(s_{i+2}s_{i+1}s_i).
\]

\begin{remark}
It would be interesting to see how the degenerations studied in \cite{Lit98, AB04}, induced from a reduced decomposition of the longest Weyl group element, are related to our degenerations. So is the degeneration through the PBW filtration actually a new one or isomorphic to a previous known one. Nevertheless, one advantage of our construction is that a lot of data are given very explicit, like the facet, the vertices etc. 
\end{remark}

\subsection{Gorenstein polytopes}
For the sake of completion let us finish with a brief view towards Gorenstein polytopes.\\
Let $w \in W^i$ and suppose $\mathcal{P}_{\underline{\ell}_{_w}}$ is a pure poset (or ranked poset), e.g. all maximal chains have the same length. Then a result by Stanley \cite[Theorem 5.4]{Sta78} gives that the order polytope $\mathcal{O}_{\underline{\ell}_{w}}$ is a Gorenstein polytope, e.g. a integer dilation of the polytope is (up to translation by a vector) reflexive. 
This is equivalent to the fact that the $h^*$-vector of $\mathcal{O}_{\underline{\ell}_{w}}$ is symmetric (\cite{Sta78}).\\ 
Let us translate this condition to our context:\\
Denote $\{\ell_{j_1}, \ldots, \ell_{j_s} \} \subset \{ \ell_1 \leq \ldots \leq \ell_i\}$ the subset such that $\ell_{j_p} \neq \ell_{j_p -1}$. Then all maximal chains have the same length if and only if $\ell_{j_p} - j_p$ is the same integer for all $p=1, \ldots, s$.\\
Since $\mathcal{O}_{\underline{\ell}_{w}}$ and the chain polytope have the same Ehrhart polynomial, they also have the same $h^*$-vector.
\begin{corollary}\label{cor-gor}
Let $w \in W^i$, then the order polytope $\mathcal{O}_{\underline{\ell}_{w}}$ and the chain polytope $\mathcal{C}_{\underline{\ell}_{w}}$ are Gorenstein polytopes if and only if there exists $K$ such that for all $j = 1, \ldots, i$: 
\[ 
\ell_j - j = K \text{ or } \ell_j = \ell_{j-1}.
\]
\end{corollary}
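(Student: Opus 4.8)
The plan is to reduce the statement to two classical facts: (i) the order polytope $\mathcal{O}_P$ of a finite poset $P$ is Gorenstein if and only if $P$ is pure, i.e.\ all its maximal chains have the same length (\cite[Theorem 5.4]{Sta78}); and (ii) for a normal lattice polytope, being Gorenstein is equivalent to the symmetry of its $h^*$-vector (\cite{Sta78}). Granting these, the corollary becomes a purely combinatorial computation of the maximal chain lengths of $\mathcal{P}_{\underline{\ell}_w}$.

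First I would describe $\mathcal{P}_{\underline{\ell}_w}$ as a down-set in a product of two chains. Under the substitution $x_{k,j} \mapsto (i-k,\, j-i)$ the ambient poset $\{x_{k,j} \mid 1 \le k \le i,\ i \le j \le n\}$ becomes the grid $\{0,\dots,i-1\} \times \{0,\dots,n-i\}$ with the componentwise order, and $\mathcal{P}_{\underline{\ell}_w}$ is precisely the down-set cut out by $j \le \ell_k$; it is a down-set exactly because $\ell_1 \le \cdots \le \ell_i$. Its unique minimal element is $x_{i,i} = (0,0)$ and, as recorded in Section~\ref{Section1}, its maximal elements are exactly the $x_{k,\ell_k}$ with $\ell_k \ne \ell_{k-1}$. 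In a product of two chains every saturated chain between comparable elements $a \le b$ has length equal to the sum of the differences of the two coordinates; moreover, since $\mathcal{P}_{\underline{\ell}_w}$ is a down-set containing the minimum, any saturated chain from $x_{i,i}$ to a maximal element $x_{k,\ell_k}$ stays inside $\mathcal{P}_{\underline{\ell}_w}$. Hence a maximal chain ending at $x_{k,\ell_k}$ has length $(i-k) + (\ell_k - i) = \ell_k - k$, and the multiset of maximal chain lengths of $\mathcal{P}_{\underline{\ell}_w}$ is $\{\, \ell_k - k \mid \ell_k \ne \ell_{k-1} \,\}$.

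Now $\mathcal{P}_{\underline{\ell}_w}$ is pure if and only if this set is a singleton $\{K\}$, i.e.\ if and only if there is an integer $K$ with $\ell_j - j = K$ for every $j$ satisfying $\ell_j \ne \ell_{j-1}$; this is literally the condition ``$\ell_j - j = K$ or $\ell_j = \ell_{j-1}$ for all $j$'' in the statement (with the convention $\ell_0 = i-1$). By fact (i) this is equivalent to $\mathcal{O}_{\underline{\ell}_w}$ being Gorenstein. Finally, to bring in the chain polytope: by Theorem~\ref{sta-1} the polytopes $\mathcal{O}_{\underline{\ell}_w}$ and $\mathcal{C}_{\underline{\ell}_w}$ have the same Ehrhart polynomial, hence the same $h^*$-vector; both are normal by Corollary~\ref{cor-nor}; so by fact (ii), $\mathcal{C}_{\underline{\ell}_w}$ is Gorenstein exactly when $\mathcal{O}_{\underline{\ell}_w}$ is. This yields the claimed equivalence for both polytopes.

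The only genuine (and minor) point requiring care is the chain-length computation: that all maximal chains terminating at a given maximal element have the same length, and that this common length is $\ell_k - k$. Once $\mathcal{P}_{\underline{\ell}_w}$ is recognized as a down-set in a product of two chains this is immediate, and everything else is citation and bookkeeping. One should check that the boundary cases cause no trouble (e.g.\ $\ell_k = i-1$, where $x_{k,\ell_k}$ is not a vertex at all, and $k=1$ with $\ell_0 = i-1$), but these are handled by the definition of maximal element and the purity condition as stated.
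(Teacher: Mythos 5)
Your proof is correct and follows essentially the same route as the paper: cite Stanley's theorem that $\mathcal{O}_P$ is Gorenstein iff $P$ is pure, translate purity of $\mathcal{P}_{\underline{\ell}_w}$ to the condition $\ell_j - j = K$ whenever $\ell_j \ne \ell_{j-1}$, and transfer the Gorenstein property to $\mathcal{C}_{\underline{\ell}_w}$ via equality of Ehrhart polynomials (hence $h^*$-vectors) together with normality. The only difference is that you spell out the chain-length computation (viewing $\mathcal{P}_{\underline{\ell}_w}$ as a down-set in a product of two chains), whereas the paper states the translation without proof; this added detail is accurate and harmless.
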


\bibliographystyle{alpha}
\bibliography{biswal-fourier}

\begin{thebibliography}{BBDF14}

\bibitem[AB04]{AB04}
V.~Alexeev and M.~Brion.
\newblock Toric degenerations of spherical varieties.
\newblock {\em Selecta Math. (N.S.)}, 10(4):453--478, 2004.

\bibitem[ABS11]{ABS11}
F.~Ardila, T.~Bliem, and D.~Salazar.
\newblock Gelfand-{T}setlin polytopes and
  {F}eigin-{F}ourier-{L}ittelmann-{V}inberg polytopes as marked poset
  polytopes.
\newblock {\em J. Combin. Theory Ser. A}, 118(8):2454--2462, 2011.

\bibitem[BBDF14]{BBDF14}
T.~Backhaus, L.~Bossinger, C.~Desczyk, and G.~Fourier.
\newblock The degree of the {H}ilbert-{P}oincar\'e polynomial of {PBW}-graded
  modules.
\newblock to appear in Comptes rendus {M}athematique, 2014.

\bibitem[BD14]{BD14}
T.~Backhaus and C.~Desczyk.
\newblock P{BW} filtration: {F}eigin-{F}ourier-{L}ittelmann modules via {H}asse
  diagrams.
\newblock arXiv:1407.73664, 2014.

\bibitem[CF13]{CF13}
I.~Cherednik and E.~Feigin.
\newblock Extremal part of the {PBW}-filtration and {E}-polynomials.
\newblock arXiv:1306.3146, 2013.

\bibitem[CO13]{CO13}
I.~Cherednik and D.~Orr.
\newblock Nonsymmetric difference {W}hittaker functions.
\newblock arXiv:1302.4094, 2013.

\bibitem[DLM04]{DLM04}
Jes{\'u}s~A. De~Loera and Tyrrell~B. McAllister.
\newblock Vertices of {G}elfand-{T}setlin polytopes.
\newblock {\em Discrete Comput. Geom.}, 32(4):459--470, 2004.

\bibitem[FFL11a]{FFoL11a}
E.~Feigin, G.~Fourier, and P.~Littelmann.
\newblock P{BW} filtration and bases for irreducible modules in type {${A}_n$}.
\newblock {\em Transform. Groups}, 16(1):71--89, 2011.

\bibitem[FFL11b]{FFoL11b}
E.~Feigin, G.~Fourier, and P.~Littelmann.
\newblock P{BW} filtration and bases for symplectic {L}ie algebras.
\newblock {\em Int. Math. Res. Not. IMRN}, 1(24):5760--5784, 2011.

\bibitem[FFL13a]{FFoL13a}
E.~Feigin, G.~Fourier, and P.~Littelmann.
\newblock Favourable modules: Filtrations, polytopes, {N}ewton-{O}kounkov
  bodies and flat degenerations.
\newblock arXiv:1306.1292v3, 2013.

\bibitem[FFL13b]{FFoL13}
E.~Feigin, G.~Fourier, and P.~Littelmann.
\newblock P{BW}-filtration over $\mathbb{Z}$ and compatible bases for
  $v(\lambda)$ in type ${A}_n$ and ${C}_n$.
\newblock {\em Springer Proceedings in Mathematics and Statistics}, 40:35--63,
  2013.

\bibitem[FM14]{FM14}
E.~Feigin and I.~Makedonskyi.
\newblock Nonsymmetric {M}acdonald polynomials, {D}emazure modules and {PBW}
  filtration.
\newblock Preprint arXiv:1407.6316, 2014.

\bibitem[Fou14a]{Fou14}
G.~Fourier.
\newblock New homogeneous ideals for current algebras: {F}iltrations, fusion
  products and {P}ieri rules.
\newblock Preprint: arXiv:1403.4758, 2014.

\bibitem[Fou14b]{Fou14b}
G.~Fourier.
\newblock P{BW}-degenerated {D}emazure modules and {S}chubert varieties for
  triangular elements.
\newblock arXiv:1408.6939, 2014.

\bibitem[GC50]{GC50}
I.~M. Gelfand and M.~L. Cetlin.
\newblock Finite-dimensional representations of the group of unimodular
  matrices.
\newblock {\em Doklady Akad. Nauk SSSR (N.S.)}, 71:825--828, 1950.

\bibitem[GL96]{GL96}
N.~Gonciulea and V.~Lakshmibai.
\newblock Degenerations of flag and {S}chubert varieties to toric varieties.
\newblock {\em Transform. Groups}, 1(3):215--248, 1996.

\bibitem[HL12]{HL12}
T.~Hibi and N.~Li.
\newblock Unimodular equivalence of order and chain polytopes.
\newblock arXiv:1208.4029, 2012.

\bibitem[Kas93]{Kas93}
M~Kashiwara.
\newblock The crystal base and {L}ittelmann's refined {D}emazure character
  formula.
\newblock {\em Duke Math. J.}, 71(3):839--858, 1993.

\bibitem[Kog00]{Kog00}
M.~Kogan.
\newblock Schubert geometry of flag varieties and {G}elfand-{C}etlin theory.
\newblock {\em PhD-thesis}, 2000.

\bibitem[KST12]{KST12}
V.~A. Kirichenko, E.~Yu. Smirnov, and V.~A. Timorin.
\newblock Schubert calculus and {G}elfand-{T}setlin polytopes.
\newblock {\em Uspekhi Mat. Nauk}, 67(4(406)):89--128, 2012.

\bibitem[Lit95]{Lit95}
P.~Littelmann.
\newblock Crystal graphs and {Y}oung tableaux.
\newblock {\em J. Algebra}, 175(1):65--87, 1995.

\bibitem[Lit98]{Lit98}
P.~Littelmann.
\newblock Cones, crystals, and patterns.
\newblock {\em Transform. Groups}, 3(2):145--179, 1998.

\bibitem[Sta78]{Sta78}
R.~P. Stanley.
\newblock Hilbert functions of graded algebras.
\newblock {\em Advances in Math.}, 28(1):57--83, 1978.

\bibitem[Sta86]{Sta86}
R.~P. Stanley.
\newblock Two poset polytopes.
\newblock {\em Discrete Comput. Geom.}, 1(1):9--23, 1986.

\end{thebibliography}
\end{document}